\theoremstyle{theorem}
\newtheorem{theorem}{Theorem}[section]
\newtheorem{proposition}[theorem]{Proposition}
\newtheorem{lemma}[theorem]{Lemma}
\newtheorem{corollary}[theorem]{Corollary}
\newtheorem{conjecture}[theorem]{Conjecture}
\newtheorem{problems}[theorem]{Problems}
\theoremstyle{definition}
\newtheorem{definition}[theorem]{Definition}
\newcommand{\Z}{\mathbb{Z}}
\newcommand{\Pp}{\mathbb{P}}
\newcommand{\R}{\mathbb{R}}
\newcommand{\RP}{\mathbb{RP}}
\newcommand{\CP}{\mathbb{CP}}
\newcommand{\Cc}{\mathcal C}
\newcommand{\Dd}{\mathcal D}
\newcommand{\Tt}{\mathcal T}
\newcommand{\Hh}{\mathcal H}
\newcommand{\Kk}{\mathcal K}
\newcommand{\Qq}{\mathcal Q}
\newcommand{\A}{\alpha}
\newcommand{\n}{\beta}
\newcommand{\g}{\gamma}
\newcommand{\pd}{\partial}
\newcommand{\emp}{\emptyset}
\newcommand{\X}{\times}
\newcommand{\Bb}{\mathcal{B}}
\newcommand{\Sss}{\mathcal{S}}
\newcommand{\eps}{\epsilon}
\def\@seccntformat#1{%
  \protect\textup{\protect\@secnumfont
    \ifnum\pdfstrcmp{subsection}{#1}=0 \bfseries\fi
    \csname the#1\endcsname
    \protect\@secnumpunct
  }%
}  
\newtheorem*{rep@theorem}{\rep@title}
\newcommand{\newreptheorem}[2]{%
\newenvironment{rep#1}[1]{%
 \def\rep@title{#2 \ref{##1}}%
 \begin{rep@theorem}}%
 {\end{rep@theorem}}}
\newcommand*\wt[1]{\mathpalette\wthelper{#1}}
\newcommand*\wthelper[2]{%
        \hbox{\dimen@\accentfontxheight#1%
                \accentfontxheight#11.3\dimen@
                $\m@th#1\widetilde{#2}$%
                \accentfontxheight#1\dimen@
        }%
}
\newcommand*\accentfontxheight[1]{%
        \fontdimen5\ifx#1\displaystyle
                \textfont
        \else\ifx#1\textstyle
                \textfont
        \else\ifx#1\scriptstyle
                \scriptfont
        \else
                \scriptscriptfont
        \fi\fi\fi3
}
\begin{document}

\rhead{\thepage}
\lhead{\author}
\thispagestyle{empty}


\raggedbottom
\pagenumbering{arabic}
\setcounter{section}{0}


\title{Bridge trisections of knotted surfaces in 4--manifolds}
\date{\today}

\author{Jeffrey Meier}
\address{Department of Mathematics, University of Georgia, Athens, GA 30606}
\email{jeffrey.meier@uga.edu}
\urladdr{jeffreymeier.org} 

\author{Alexander Zupan}
\address{Department of Mathematics, University of Nebraska-Lincoln, Lincoln, NE 68588}
\email{zupan@unl.edu}
\urladdr{http://www.math.unl.edu/~azupan2}

\begin{abstract}
	We prove that every smoothly embedded surface in a 4--manifold can be isotoped to be in bridge position with respect to a given trisection of the ambient 4--manifold; that is, after isotopy, the surface meets components of the trisection in trivial disks or arcs.  Such a decomposition, which we call a \emph{generalized bridge trisection}, extends the authors' definition of bridge trisections for surfaces in $S^4$.  Using this new construction, we give diagrammatic representations called \emph{shadow diagrams} for knotted surfaces in 4--manifolds.  We also provide a low-complexity classification for these structures and describe several examples, including the important case of complex curves inside $\mathbb{CP}^2$.  Using these examples, we prove that there exist exotic 4--manifolds with $(g,0)$--trisections for certain values of $g$.  We conclude by sketching a conjectural uniqueness result that would provide a complete diagrammatic calculus for studying knotted surfaces through their shadow diagrams.
\end{abstract}

\maketitle

\section{Introduction}


Every knot in $S^3$ can be cut into two trivial tangles (collections of unknotted arcs) in a classical decomposition known as a \emph{bridge splitting}.  This structure provides a convenient measure of complexity, the number of unknotted arcs in each collection, and the smallest number of such arcs in any bridge splitting of a given knot $K$ is the widely studied \emph{bridge number} of $K$.  It is well-known that the idea of a bridge splitting can be extended to other spaces:  Every 3--manifold $Y$ admits a Heegaard splitting, a decomposition of $Y$ into two simple pieces called handlebodies, and given a knot $K \subset Y$, there is an isotopy of $K$ after which it meets each handlebody in a collection of unknotted arcs.

In dimension four, decompositions analogous to Heegaard splittings cut spaces into not two but three components.  Gay and Kirby proved that every smooth, closed, connected, orientable 4--manifold (henceforth, \emph{4--manifold}) $X$ admits a trisection, splitting $X$ into three simple 4--dimensional pieces (4--dimensional 1--handlebodies) that meet pairwise in 3--dimensional handlebodies and have as their common intersection a closed surface.  Similarly, in~\cite{Meier-Zupan_Bridge_2017} the authors proved that every smoothly embedded, closed surface (henceforth, \emph{knotted surface}) $\Kk$ in $S^4$ admits a bridge trisection, a decomposition of the pair $(S^4,\Kk)$ into three collections of unknotted disks in 4--balls that intersect in trivial tangles in 3--balls, akin to classical bridge splittings in $S^3$.  In this paper, we extend this construction to knotted surfaces in arbitrary 4--manifolds.  Given a trisection $\Tt$ splitting a 4--manifold $X$ into $X_1 \cup X_2 \cup X_3$, we say that a knotted surface $\Kk \subset X$ is in \emph{bridge position} if $\Kk \cap X_i$ is a collection of unknotted disks and $\Kk \cap (X_i \cap X_j)$ is a collection of trivial tangles.  Our first result is the following.

\begin{theorem}\label{mainthm}
	Let $X$ be a 4--manifold with trisection $\Tt$.  Any knotted surface $\Kk$ in $X$ can be isotoped into bridge position with respect to $\Tt$.
\end{theorem}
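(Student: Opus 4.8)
The plan is to mimic the outline of the $S^4$ case from~\cite{Meier-Zupan_Bridge_2017} --- put $\Kk$ in general position with respect to the trisection and then trade local complexity between the three pieces until every piece is trivial --- but, since a general 4--manifold lacks the banded-unlink presentation that drives the $S^4$ argument, I would run the simplification using Morse theory on the trisection pieces themselves. To set up, recall that $\Tt$ presents $X$ as $X_1 \cup X_2 \cup X_3$, where each sector $X_i$ is a 4--dimensional 1--handlebody $\natural^{k_i}(S^1 \times B^3)$, the central surface is $\Sigma = X_1 \cap X_2 \cap X_3$, and each pairwise piece $H_{ij} = X_i \cap X_j$ is a genus $g$ handlebody with $\partial H_{ij} = \Sigma$. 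After a small isotopy, $\Kk$ is in general position with respect to this stratification: it meets $\Sigma$ transversely in a finite set $P$ of points, meets each $H_{ij}$ in a properly embedded tangle $T_{ij}$ with $\partial T_{ij} = P$, and meets each $X_i$ in a properly embedded surface $F_i$ with $\partial F_i = T_{ij} \cup T_{ik}$, where $\{i,j,k\} = \{1,2,3\}$.

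It then remains to isotope $\Kk$ so that each $F_i$ becomes a collection of unknotted disks and each $T_{ij}$ becomes a trivial tangle, at the cost of enlarging $P$. The local mechanism I would use in a single sector is this. The 4--dimensional 1--handlebody $X_i$ has a spine $\Gamma_i$, a graph, for which $X_i \setminus \nu(\Gamma_i) \cong \partial X_i \times [0,1)$; since $\dim F_i + \dim \Gamma_i = 3 < 4$, I can isotope $\Kk$ to push $F_i$ off $\Gamma_i$, placing $F_i$ inside the collar $\partial X_i \times [0,1)$. Working in the collar, put $F_i$ in Morse position for the projection to $[0,1)$, cancel adjacent pairs of critical points, and push each surviving critical point outward across $\partial X_i$ into $H_{ij}$ or $H_{ik}$; the absence of handles of index $2$, $3$, and $4$ in $X_i$ is exactly what lets the maxima and saddles of $F_i$ be disposed of, and each such move converts surface complexity of $F_i$ into new points of $P$ and new strands of the adjacent tangles. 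Dually, I would isotope each tangle $T_{ij}$ into bridge position inside $H_{ij}$ by the classical move of sliding its local maxima onto $\Sigma$, which again only enlarges $P$. All of these moves are supported in small neighborhoods, and no attempt is made to keep the bridge number small.

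The essential difficulty, I expect, is \emph{reconciling} these local simplifications globally: trivializing $F_i$ disturbs $T_{ij}$ and $T_{ik}$, while trivializing $T_{ij}$ disturbs both $F_i$ and $F_j$, so a naive round-robin through the pieces need not terminate. The remedy I would pursue is to run the whole simplification against a single Gay--Kirby trisection map $f \colon X \to B^2$ --- whose three sectors, three edges, and central point recover the $X_i$, the $H_{ij}$, and $\Sigma$ --- after first isotoping $\Kk$ so that $f|_{\Kk} \colon \Kk \to B^2$ is a stable map in general position with respect to the singular image of $f$. One then sweeps the folds and cusps of $f|_{\Kk}$ in all three sectors at once, radially outward toward $\partial B^2$ and inward toward the center, so that any complexity removed from one sector reappears only as additional bridge points shared with its neighbors. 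Checking that this sweep is consistent across the edges of $B^2$, and that a suitable complexity --- say the number of folds and cusps of $f|_{\Kk}$, refined lexicographically by $|P|$ --- can be forced to decrease until all the $F_i$ are disk systems and all the $T_{ij}$ are trivial tangles simultaneously, is where the real work lies. Granting this, the resulting decomposition of $(X,\Kk)$ is by definition a generalized bridge trisection, and Theorem~\ref{mainthm} follows.
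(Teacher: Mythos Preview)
Your outline is not a complete proof: you correctly identify the global-reconciliation step as ``where the real work lies,'' and then grant it rather than carry it out. The proposed fix --- putting $f|_{\Kk}$ in generic position as a stable map to the disk and sweeping its singular locus radially while tracking a fold/cusp count --- is plausible-sounding but unproved here; you give no argument that the sweep actually decreases your complexity measure, nor that the moves available to a generic homotopy of $f|_{\Kk}$ can always be arranged to push singularities outward in one sector without creating new ones in a neighboring sector. This is a genuine gap, not a routine verification, and it is precisely the obstacle your round-robin discussion anticipates.

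The paper avoids this difficulty entirely by taking a different route. Rather than working sector-by-sector inside the trisection, it first invokes the Gay--Kirby fact (Lemma~\ref{trimorse}) that every trisection $\Tt$ arises as $\Tt(h,\Sigma)$ for some self-indexing Morse function $h\colon X\to\R$ together with a Heegaard surface $\Sigma$ in the level $Y_{\{3/2\}}$. One then isotopes $\Kk$ so that $h|_{\Kk}$ is also self-indexing (Lemma~\ref{doublemorse}: minima at level $1$, saddles at level $2$, maxima at level $3$), which reduces the entire 4--dimensional problem to a single 3--dimensional one: the surface is now encoded by a banded link $\Gamma = L_{\{3/2\}}\cup\{\eta_i\}$ in the fixed 3--manifold $Y_{\{3/2\}}$, and it suffices to isotope $\Gamma$ into bridge position with respect to $\Sigma$ (Lemma~\ref{bridgeposition}). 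That last step is essentially classical --- project to $\Sigma$, clear self-crossings and framings of the bands, then perturb near band-endpoints --- and the check that the resulting decomposition is a generalized bridge trisection (Lemma~\ref{bridgetri}) is direct. In short, the paper linearizes the trisection via a single global Morse function so that there is only \emph{one} piece to simplify, whereas your approach fights the threefold symmetry of the trisection head-on and pays for it with an unresolved coherence problem.
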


If $\Kk \subset X$ is in bridge position with respect to a trisection, we call the decomposition $(X,\Kk) = (X_1,\Kk \cap X_1) \cup (X_2,\Kk \cap X_2) \cup (X_3,\Kk \cap X_3)$ a \emph{generalized bridge trisection}. 

Returning to dimension three, we note that it can be fruitful to modify a bridge splitting of a knot $K$ in a 3--manifold $Y$ so that the complexity of the underlying Heegaard splitting increases while the number of unknotted arcs decreases.  This process involves a technical operation called \emph{meridional stabilization}.  We show that there is an analogous operation, which we also call meridional stabilization, in the context of bridge trisections.  As a result, we prove the next theorem. (Precise definitions are included in Section~\ref{sec:prelim}.)

\begin{theorem}\label{mainthm2}
	Let $\Kk$ be a knotted surface with $n$ connected components in a 4--manifold $X$. The pair $(X,\Kk)$ admits a $(g,k;b,n)$--generalized bridge trisection satisfying $b=3n-\chi(\Kk)$.  In particular, if $\Kk$ is a 2--knot in $X$, then $\Kk$ can be put in 1--bridge position.
\end{theorem}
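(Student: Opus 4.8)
The plan is to take an arbitrary generalized bridge trisection of $(X,\Kk)$, which exists by Theorem~\ref{mainthm}, and to simplify it one step at a time via meridional stabilization until the three collections of disks each consist of exactly $n$ disks --- one for each component of $\Kk$. This parallels the efficient bridge trisections of surfaces in $S^4$ obtained in~\cite{Meier-Zupan_Bridge_2017}.

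The first step is an Euler characteristic count that identifies the target configuration. If $(X,\Kk) = (X_1,\Kk\cap X_1) \cup (X_2,\Kk\cap X_2) \cup (X_3,\Kk\cap X_3)$ is a generalized bridge trisection in which $\Kk\cap X_i$ is a collection of $c_i$ disks and each bridge tangle $\Kk\cap(X_i\cap X_j)$ has $b$ strands (so $\Kk$ meets the central surface $\Sigma$ in $2b$ points), then applying inclusion--exclusion for the Euler characteristic to the union $\Kk = (\Kk\cap X_1)\cup(\Kk\cap X_2)\cup(\Kk\cap X_3)$ --- whose pairwise intersections are the three bridge tangles and whose triple intersection is $\Kk\cap\Sigma$ --- yields
\[
\chi(\Kk) = (c_1+c_2+c_3) - 3b + 2b = c_1+c_2+c_3 - b.
\]
Hence $b = c_1+c_2+c_3-\chi(\Kk)$, so the asserted identity $b = 3n-\chi(\Kk)$ is equivalent to $c_1 = c_2 = c_3 = n$. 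It therefore suffices to produce a generalized bridge trisection with $c_i = n$ for all $i$.

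The second step is the reduction. One first arranges --- using the construction of Theorem~\ref{mainthm} and, if necessary, a preliminary perturbation --- that every component of $\Kk$ meets every $X_i$, so that $c_i \ge n$ for each $i$. Now suppose $c_i > n$ for some $i$. Then some component $\Kk_\ell$ of $\Kk$ meets $X_i$ in at least two disks, and since $\Kk_\ell$ is connected one can find a bridge arc --- lying in one of the two bridge tangles incident to $X_i$ --- along which a meridional stabilization merges two disks of $\Kk\cap X_i$ into a single disk. Such a move decreases both $b$ and $c_i$ by one, leaves the other $c_j$ unchanged, and alters the trisection invariants $(g,k)$ of $X$ in a controlled (genus-increasing) way; in particular it preserves the relation of the previous paragraph. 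Since the nonnegative integer $(c_1+c_2+c_3) - 3n$ strictly decreases under each such move, finitely many meridional stabilizations produce a $(g,k;b,n)$--generalized bridge trisection with $c_1 = c_2 = c_3 = n$, and then $b = 3n - \chi(\Kk)$ as desired. The final assertion is the case $n = 1$: a 2--knot is connected with $\chi(\Kk) = 2$, so it admits a generalized bridge trisection with $b = 3\cdot 1 - 2 = 1$; that is, it can be put in 1--bridge position.

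The main obstacle is the targeted reduction. One must verify that whenever $c_i > n$ a meridional stabilization can be arranged so as to decrease precisely $c_i$ rather than some $c_j$ with $j \ne i$: this requires a careful analysis of how the two or more disks of $\Kk_\ell\cap X_i$ are joined to one another by bridge arcs running through the two adjacent pieces, and --- when the naive candidate bridge arc instead joins $X_i$ to a disk lying in the wrong piece --- first performing a perturbation to manufacture a usable arc. One must also check that the output of a meridional stabilization is again a genuine generalized bridge trisection: the disks of each $\Kk\cap X_m$ must remain unknotted, each bridge tangle must remain trivial, and the modified handle decomposition of $X$ must still be a trisection. This local bookkeeping around the bridge disk used for the stabilization is where the real work lies; once it is in place, the Euler characteristic computation and the induction are immediate.
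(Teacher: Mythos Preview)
Your overall strategy---iterate meridional stabilization starting from the generalized bridge trisection given by Theorem~\ref{mainthm}, then read off $b=3n-\chi(\Kk)$ from the Euler characteristic identity---is exactly the paper's approach. However, two points deserve correction or streamlining.

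First, a geometric slip: the bridge arc used in a meridional $i$--stabilization lies in $\tau_{jk}$, the tangle \emph{not} contained in $\partial X_i$, rather than in ``one of the two bridge tangles incident to $X_i$.'' The arcs of $\tau_{ij}$ and $\tau_{ik}$ already sit in the boundary of disks of $\Dd_i$, so they cannot join two distinct disks of $\Dd_i$; it is an arc of $\tau_{jk}$, whose endpoints on $\Sigma$ lie on (boundaries of) two distinct disks of $\Dd_i$, along which one tubes. Once this is said correctly, the existence of such an arc whenever $c_i>n$ follows directly from connectedness of the relevant component $\Kk_\ell$: the $1$--skeleton of the induced cell decomposition of $\Kk_\ell$ is connected, and arcs of $\tau_{ij}\cup\tau_{ik}$ stay within a single $\Dd_i$--disk, so some arc of $\tau_{jk}$ must jump between distinct $\Dd_i$--disks.

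Second, the ``targeted reduction'' worry and the appeal to auxiliary perturbations are unnecessary, and the paper sidesteps them entirely. Rather than choosing an index $i$ with $c_i>n$ and hunting for an arc tailored to that $i$, the paper simply performs \emph{any} available meridional stabilization (for any $i$) until none remain. At that terminal stage, for each $i$ no arc of $\tau_{jk}$ joins distinct disks of $\Dd_i$; the connectivity argument above then forces each component of $\Kk$ to meet $\Dd_i$ in exactly one disk, so $c_i=n$ automatically. This removes the need to manufacture arcs via perturbation and collapses your induction into a one--line termination argument. The verification that meridional stabilization outputs a genuine generalized bridge trisection is Lemma~\ref{lem:mstab}; that is indeed where the local bookkeeping lives, but it is routine. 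No preliminary perturbation is needed to ensure every component meets every $X_i$: this is automatic, since every disk of $\Dd_i$ has boundary meeting $\Sigma$.
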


A generalized bridge trisection of the type guaranteed by Theorem~\ref{mainthm2} is called \emph{efficient} with respect to the underlying trisection $\Tt$, since it is the smallest possible $b$ and $n$ for any surface with the same Euler characteristic.

As a Corollary to Theorem~\ref{mainthm}, we explain how these new decompositions provide a way to encode a knotted surface combinatorially in a 2--dimensional diagram, which we call a \emph{shadow diagram}.  We anticipate that this new paradigm in the study of knotted surfaces will open a window to novel structures and connections in this field.

\begin{corollary}\label{maincor}
	Every generalized bridge trisection of a knotted surface $\Kk$ in a 4--manifold $X$ induces a shadow diagram.  Moreover, if $\Kk$ has $n$ components, then an efficient generalized bridge trisection of $\Kk$ induces a shadow diagram with $9n-3\chi(\Kk)$ arcs.  In particular, if $\Kk$ is a 2--knot in $X$, then $(X,\Kk)$ admits a doubly-pointed trisection diagram.
\end{corollary}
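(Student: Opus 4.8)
The plan is to unwind the definitions. A generalized bridge trisection is built on top of a trisection $\Tt$, and $\Tt$ is recorded by a trisection diagram, so once $\Kk$ has been put in bridge position it remains only to transfer the tangle pieces of $\Kk$ onto the trisection surface. Fix a generalized bridge trisection $(X,\Kk) = (X_1,\Kk\cap X_1)\cup(X_2,\Kk\cap X_2)\cup(X_3,\Kk\cap X_3)$, let $\Sigma = X_1\cap X_2\cap X_3$ be the trisection surface, and write $H_1,H_2,H_3$ for the three spine handlebodies, each with $\partial H_i = \Sigma$. By Gay--Kirby the underlying trisection is encoded by a trisection diagram $(\Sigma;\Aa,\Nn,\Gg)$, where $\Aa,\Nn,\Gg$ are cut systems for the three spine handlebodies; since we may choose this diagram after $\Kk$ has been isotoped into bridge position, we take it transverse to the bridge points $\Kk\cap\Sigma$.

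The first step is to produce the shadow arcs. Because $\Kk$ is in bridge position, each $\Kk\cap H_i$ is a trivial tangle $\tau_i$ in the handlebody $H_i$, and by definition a trivial tangle is isotopic rel endpoints into $\partial H_i = \Sigma$. Fixing such an isotopy for each $i$ produces a collection of properly embedded arcs $\overline{\tau}_i \subset \Sigma$ with endpoint set $\Kk\cap\Sigma$, and the tuple $(\Sigma;\Aa,\Nn,\Gg;\overline{\tau}_1,\overline{\tau}_2,\overline{\tau}_3)$ is the shadow diagram induced by the bridge trisection. To see that this is a faithful record of $(X,\Kk)$, and so deserves the name diagram, I would reverse the construction: pushing $\overline{\tau}_i$ off $\Sigma$ back into $H_i$ along a collar recovers $\tau_i$ up to isotopy, the union of the tangles lying in any two of the handlebodies is then an unlink in the boundary of the remaining $X_j$ that bounds a system of trivially embedded disks there, and gluing these disk systems to the tangles reassembles $\Kk$.

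The arc count is bookkeeping. In a $(g,k;b,n)$--generalized bridge trisection each of the three tangles $\tau_i$ has $b$ strands, so the shadow diagram has $3b$ arcs; if moreover the bridge trisection is efficient, then Theorem~\ref{mainthm2} gives $b = 3n - \chi(\Kk)$, and the shadow diagram has $3b = 9n - 3\chi(\Kk)$ arcs. When $\Kk$ is a $2$--knot we have $n = 1$ and $\chi(\Kk) = 2$, so an efficient bridge trisection has $b = 1$; then $\Kk$ meets $\Sigma$ in exactly two points and its three shadow arcs each join these two points, which is precisely the data of a doubly-pointed trisection diagram, in direct analogy with the doubly-pointed Heegaard diagrams used for knots in $3$--manifolds.

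Since this is a corollary of Theorems~\ref{mainthm} and~\ref{mainthm2}, no step is a real obstruction. The one place that requires care is the claim implicit in the reconstruction above, that a system of trivially embedded disks in a $4$--dimensional $1$--handlebody is determined, up to isotopy rel boundary, by its boundary unlink --- a relative uniqueness statement of Laudenbach--Po\'enaru type --- together with the routine check that the choices made in isotoping the tangles into $\Sigma$ do not change the isotopy class of the reconstructed surface. Granting this, the Corollary is a direct transcription of the $S^4$ shadow-diagram construction of~\cite{Meier-Zupan_Bridge_2017} to an arbitrary trisection.
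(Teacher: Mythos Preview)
Your proposal is correct and matches the paper's approach: the paper also treats the corollary as an immediate consequence of the definitions together with Theorems~\ref{mainthm} and~\ref{mainthm2}, noting only that every trivial tangle in a handlebody admits a curve-and-arc system and then reading off the arc count $3b = 9n - 3\chi(\Kk)$. The ``one place that requires care'' you flag---uniqueness of trivial disks in a 1--handlebody rel boundary---is exactly the content of Lemma~\ref{unq} in the paper, so your instinct there is on target.
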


A knot that has been decomposed into a pair unknotted arcs admits a representation called a \emph{doubly pointed Heegaard diagram}; a doubly-pointed trisection diagram is a direct adaptation of this structure.  See Section~\ref{sec:exs} for further details.

In Section~\ref{sec:exs}, we give shadow diagrams for various examples of simple surfaces in 4--manifolds.  First, we give a classification of those 2--knots that can be put in 1--bridge position with respect to a genus one trisection of the ambient 4--manifold; the only non-trivial examples are the complex line $\CP^1$ and the quadric $\Cc_2$, both in $\CP^2$.  We then expand this to a the study of complex curves in complex 4--manifolds, announcing preliminary results related to on-going work with Peter Lambert-Cole.  In particular, we announce the following result, which shows that complex curves in $\CP^2$ have efficient generalized bridge trisections with respect to the genus one trisection of $\CP^2$.


\begin{theorem}\label{thm:complex}
Let $\Cc_d$ be the complex curve of degree $d$ in $\CP^2$.  Then, the pair $(\CP^2,\Cc_d)$ admits an efficient generalized bridge trisection of genus one.
\end{theorem}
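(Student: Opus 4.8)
The plan is to construct the efficient genus one generalized bridge trisection of $(\CP^2, \Cc_d)$ explicitly, using the compatibility between the standard genus one trisection $\Tt$ of $\CP^2$ and the algebraic structure of the curve $\Cc_d = \{[x:y:z] : p_d(x,y,z) = 0\}$ for a generic homogeneous polynomial $p_d$ of degree $d$. The genus one trisection of $\CP^2$ is the one whose central surface is the standard torus arising from the moment map picture: the three pieces $X_i$ are the preimages under the moment map $\mu : \CP^2 \to \Delta$ (the standard 2--simplex) of the three ``corner'' subregions, each of which is itself a bidisk $D^2 \times D^2$, and the pairwise intersections $X_i \cap X_j$ sit over the edges of $\Delta$. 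The first step is to perturb $p_d$ within its isotopy class of smooth curves so that $\Cc_d$ is in \emph{general position} with respect to the moment map stratification; this is possible because the toric structure is generic enough that a small algebraic perturbation (or a symplectic isotopy) suffices.

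Second, I would analyze $\Cc_d \cap X_i$ for each of the three sectors. The key computation is that over each corner bidisk, the curve $\Cc_d$ meets $X_i$ in a collection of \emph{properly embedded complex disks}, which are automatically unknotted (complex curves in a bidisk that are graphs, or can be straightened to graphs, over one of the factors are trivial). The count of these disks in each sector, together with the count of arcs $\Cc_d \cap (X_i \cap X_j)$ in each 3--dimensional handlebody, will be governed by Bézout-type intersection numbers: the curve of degree $d$ meets each coordinate line in $d$ points, and more refined bookkeeping over the edges and vertices of $\Delta$ yields the patch and arc counts. The goal is to verify that the resulting $(1, k; b, n)$--generalized bridge trisection has $n = 1$ (the curve is connected, being irreducible for generic $p_d$) and $b = 3 - \chi(\Cc_d) = 3 - (3d - d^2) = d^2 - 3d + 3$, which is exactly the efficiency bound from Theorem~\ref{mainthm2}. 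Equivalently, one checks that no meridional stabilizations are needed: the bridge number realized by this toric decomposition already equals the lower bound $3n - \chi(\Kk)$.

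Third, to nail down that the genus of the trisection is genuinely $1$ and not forced up, I would check that the three 4--dimensional pieces $(X_i, \Cc_d \cap X_i)$ are each \emph{trivial} in the required sense, i.e. $\Cc_d \cap X_i$ is a boundary-parallel collection of disks in the 4--ball $X_i$, and that the three 3--dimensional pieces are trivial tangles in genus one handlebodies. The triviality of the disk systems in the 4--balls should follow from the fact that each $X_i$, together with $\Cc_d \cap X_i$, is a piece of a \emph{Weinstein}/Stein structure in which the complex disks are Lagrangian-framed or at least are Stein-fillable trivial tangles; alternatively one can exhibit an explicit isotopy pushing the complex disks to the standard trivial disk system, using that $\Cc_d$ restricted to a sector is a branched object over a disk factor with branch points that can be pushed to the boundary.

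The main obstacle I expect is the third step — verifying triviality of the disk and tangle pieces, particularly controlling what happens near the points where $\Cc_d$ passes through the 2--dimensional strata of the moment map (the ``edges'' $X_i \cap X_j$) and especially near the central torus. At those loci the curve is tangent to the trisection stratification in a way that must be perturbed away carefully, and one must ensure the perturbation does not create extra intersection points that would spoil either connectivity or the efficiency count. A clean way to handle this is to appeal to the general position / normal form results of Section~\ref{sec:prelim} together with the classification of genus one generalized bridge trisections alluded to in the paragraph preceding this theorem: since the complex line $\CP^1 = \Cc_1$ and quadric $\Cc_2$ are already known to sit efficiently in genus one, one can attempt an inductive argument, building $\Cc_{d+1}$ from $\Cc_d$ by a local modification (adding one more complex line and smoothing the intersection) and checking that this modification is realized by an admissible move on generalized bridge trisections that raises $b$ by exactly the amount dictated by the change in Euler characteristic while keeping the genus fixed at $1$. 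The induction base and the bookkeeping for the inductive step are where the real work lies; the toric picture provides the conceptual skeleton but the isotopies must be made precise. (This is joint work in progress with Peter Lambert-Cole, so here we only indicate the strategy.)
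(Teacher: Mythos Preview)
The paper does not actually prove Theorem~\ref{thm:complex}. It is announced in the introduction and restated in Subsection~\ref{subsec:complex}, but the authors say explicitly: ``Justification of the diagrams in Figure~\ref{fig:cubic}, along with the complete proof of Theorem~\ref{thm:complex} will appear in a forthcoming article based on the aforementioned collaboration with Lambert-Cole.'' There is therefore no argument in the paper to compare your proposal against; your closing parenthetical in fact mirrors the paper's own stance.

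That said, your outline is consistent with what the paper hints at. The genus one trisection of $\CP^2$ is the toric one, the target complexity $(1,0;(d-1)(d-2)+1,1)$ stated in Subsection~\ref{subsec:complex} matches your computation $b = 3 - \chi(\Cc_d) = d^2 - 3d + 3$, and the paper credits Lambert-Cole for the explicit shadow diagrams in Figure~\ref{fig:cubic}.

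One point worth flagging in your sketch: you write that $\Cc_d \cap X_i$ is ``a collection of properly embedded complex disks,'' but efficiency with $n=1$ forces $c_i = 1$, i.e.\ a \emph{single} disk in each sector (since $\chi(\Kk) = c_1+c_2+c_3 - b$ together with $b = 3-\chi$ gives $c_1+c_2+c_3 = 3$). A naive B\'ezout count over a corner bidisk would suggest several disks rather than one, so either the toric perturbation must be arranged so that the curve over each corner is connected, or the inductive line-addition argument you sketch at the end is what actually produces the single-disk condition. You correctly identify this triviality/connectivity step as ``where the real work lies''; that is presumably exactly what the deferred proof handles.
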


This theorem can be used to prove the existence of \emph{efficient exotic trisections}, which in this setting are defined to be $(g,0)$-trisections of exotic 4--manifolds homeomorphic but not diffeomorphic to some standard 4--manifold.  For example, we show that the degree $d$ complex surface in $\CP^3$, which is homeomorphic, but not diffeomorphic, to a connected sum of $\CP^2$s and $\overline{\CP^2}$s for odd $d\geq 5$~\cite{Don_1990_Polynomial-invariants}, admits a $(g,0)$--trisection, where $g=d(d-2)(d-1)^2+d$.  The first examples of (inefficient) exotic trisections were given by Baykur and Saeki~\cite{Baykur-Saeki_Simplifying_2017}. 

Section~\ref{sec:proofs} contains the proofs of the main theorems and corollaries.  In Section~\ref{sec:unique}, we turn our attention to the question of uniqueness of generalized bridge trisections.  To this end, we offer the following conjecture, an affirmative answer to which seems within reach, if one can trace through the relevant Cerf theory.

\begin{conjecture}\label{conj:perturb}
	Any two generalized bridge trisections for a pair $(X,\Kk)$ that induce isotopic trisections of $X$ can be made isotopic after a sequence of elementary perturbation and unperturbation moves.
\end{conjecture}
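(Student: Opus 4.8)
\smallskip

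\noindent\textbf{A strategy for proving Conjecture~\ref{conj:perturb}.}
The plan is to translate both generalized bridge trisections into the language of Morse 2--functions and then to run a relative version of the Cerf-theoretic argument with which Gay and Kirby prove uniqueness of trisections up to stabilization. The target statement is the exact 4--dimensional analog of the fact, underlying the Reidemeister--Singer theorem for bridge splittings, that two bridge positions of a knot in a 3--manifold with respect to a \emph{fixed} Heegaard splitting become isotopic after perturbation; it also extends the authors' uniqueness theorem for bridge trisections in $S^4$. The first task is to set up a dictionary. Following Gay and Kirby, a trisection of $X$ is carried by a generic map $f\colon X\to B^2$ whose critical image is concentrated near $\partial B^2$ and for which $X_1, X_2, X_3$ are the preimages of the three sectors cut out of $B^2$ by three radial arcs from the central vertex. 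One shows, as a refinement of the proof of Theorem~\ref{mainthm}, that $\Kk$ is in bridge position for such an $f$ precisely when $f|_\Kk$ is generic, its critical points interior to each sector are all maxima in the radial direction (one for each trivial disk of $\Kk\cap X_i$), and the tangle in which $\Kk$ meets each pairwise intersection $X_i\cap X_j$ is trivial; under this dictionary an elementary perturbation is the introduction of a canceling pair of critical points of $f|_\Kk$ along a properly embedded arc, and an unperturbation is its reverse.

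Given two generalized bridge trisections of $(X,\Kk)$ whose underlying trisections are isotopic, the second step is to apply an ambient isotopy of $X$ so that the two carrying maps $f_0$ and $f_1$ become trisection-inducing maps for a single fixed trisection $\Tt$, after which the two data differ only in how $\Kk$ is positioned relative to $\Tt$. One then connects $f_0$ to $f_1$ by a generic path $\{f_t\}_{t\in[0,1]}$ realizing the two given structures at its ends, among maps that induce $\Tt$ throughout (a subtlety we flag below) while allowing $f_t|_\Kk$ to degenerate. Along such a path the combined picture in $B^2$ --- the Cerf graphic of $f_t$ together with the critical image of $f_t|_\Kk$ --- crosses finitely many codimension-one walls, and the third step is the bookkeeping that each wall-crossing is one of the following: (i) an ambient isotopy of the generalized bridge trisection; (ii) a move altering the Cerf graphic of $f_t$ but not the isotopy class of the trisection it induces, which can be arranged to take place away from $\Kk$; or (iii) a birth or death of a canceling pair of critical points of $f_t|_\Kk$, that is, exactly a perturbation or an unperturbation. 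Reorganizing the resulting sequence so that all births precede all deaths then exhibits, starting from each of the two generalized bridge trisections, a sequence of elementary perturbations after which the two agree up to isotopy.

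The main obstacle is the single class of wall-crossings that is genuinely new relative to the Gay--Kirby setting: those in which a critical point of $f_t|_\Kk$ passes through the critical locus of $f_t$, so that a strand of $\Kk$ momentarily crosses the singular set of the trisection and the combinatorial type of the bridge trisection jumps. The crux is to show that each such ``surface-through-fold'' transition can be replaced --- at the cost of first performing finitely many elementary perturbations near the fold in question --- by a sequence of moves that keeps $\Kk$ transverse to the singular set of $\Tt$ throughout, so that the two bridge trisections on the two sides of the wall are exhibited as differing by a bounded number of elementary perturbations and unperturbations. This is the 4--dimensional incarnation of trading a knot's bridge isotopy past the spine of a Heegaard splitting for a perturbation; making it rigorous requires a local normal form for $\Kk$ near each type of singularity of $f$, together with a count of how many perturbations are needed to create enough room to slide $\Kk$ past that singularity. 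Two further points demand care: that the path above can be chosen without ever forcing a stabilization of $\Tt$ itself, so that only perturbations, and not meridional stabilizations, are required; and that the Morse 2--function notion of bridge position agrees with the geometric definition of a generalized bridge trisection given in Section~\ref{sec:prelim} --- the latter should follow from the proof of Theorem~\ref{mainthm}, but must be formulated precisely enough to support the entire Cerf-theoretic argument.
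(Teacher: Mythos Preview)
This statement is a \emph{conjecture}, and the paper does not prove it; rather, Section~\ref{sec:unique} sketches what would be needed and identifies the missing ingredient. So the right comparison is between your strategy and the paper's sketched strategy, not between your proof and theirs.

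Your route is genuinely different from the one the paper outlines. The paper works with ordinary Morse functions $h_t:(X,\Kk)\to\R$ and argues that the conjecture would follow from a direct generalization of the Swenton/Kearton--Kurlin genericity theorem (which controls one-parameter families of Morse functions of the pair $(S^4,\Kk)$ over a fixed height function on $S^4$) to arbitrary ambient 4--manifolds, together with an adaptation of the authors' own $S^4$ uniqueness proof. You instead encode the trisection by a Morse 2--function $f:X\to B^2$ in the Gay--Kirby sense and propose to run a relative Cerf theory for $f_t|_{\Kk}$ over a path of such maps inducing a fixed $\Tt$. Your framework packages the ``fixed underlying trisection'' hypothesis more cleanly (it becomes a constraint on the target diagram rather than on a background height function), and your taxonomy of wall-crossings is the natural one in that setting; on the other hand, the paper's route has the advantage that the entire $S^4$ argument is already written down and would transfer verbatim once the single genericity input is supplied.

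Your proposal is honest about being a strategy, but the place where it is thinnest is exactly where the paper locates the obstruction. Your type~(iii) crossings and the ``surface-through-fold'' transitions together amount to asserting that a generic one-parameter family $f_t|_{\Kk}$ can be put into a normal form whose elementary transitions are births, deaths, and controlled passages through the singular set of $f_t$ --- and this is precisely the analog of the Swenton/Kearton--Kurlin result that the paper says is not yet available for general $(X,\Kk)$. Two further points in your outline need more than you give them: first, the claim that moves of type~(ii) ``can be arranged to take place away from $\Kk$'' is doing real work, since the fold locus of $f_t$ and the surface $\Kk$ both live in $X$ and there is no a priori reason their interaction can be pushed off; second, the reorganization step (``all births precede all deaths'') presupposes that perturbations and unperturbations at different locations commute up to isotopy, which is plausible but not automatic in this setting and is exactly the kind of statement that the missing normal-form theorem would supply.
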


\subsection*{Acknowledgements}

The authors would like to thank Rob Kirby for posing the question the inspired this paper, Peter Lambert-Cole for his interest in this work and for graciously sharing his beautiful shadow diagrams for complex curves in $\CP^2$, and John Baldwin for inquiring about a trisection diagram for K3, which sparked a sequence of realizations that led to Theorem~\ref{thm:complex} and its corollaries.  
The first author is supported by NSF grants DMS-1400543 and DMS-1758087, and the second author is supported by NSF grant DMS-1664578 and NSF-EPSCoR grant OIA-1557417.

\section{Preliminaries}\label{sec:prelim}

We will work in the smooth category throughout this paper.  All 4--manifolds are assumed to be orientable.  Let $\nu( \cdot )$ denote an open regular neighborhood in an ambient manifold that should be clear from context.  A \emph{knotted surface} $\Kk$ in a 4--manifold $X$ is a smoothly embedded, closed surface, possibly disconnected and possibly non-orientable, considered up to smooth isotopy in $X$. We will often refer to handlebodies in dimensions three and four; except where a further distinction is appropriate, we will use the term \emph{handlebody} to refer to $\natural^g(S^1\times D^2)$ and the term \emph{1--handlebody} to refer to $\natural^k(S^1\times B^3)$; by the \emph{genus} of these objects, we mean $g$ and $k$, respectively.

A \emph{trisection} $\Tt$ of a closed 4--manifold $X$, introduced by Gay and Kirby~\cite{Gay-Kirby_Trisecting_2016}, is a decomposition $X = X_1 \cup X_2 \cup X_3$, where $X_i$ is a 1--handlebody, $H_{ij} = X_i \cap X_j$ is a handlebody for $i \neq j$, and $\Sigma = X_1 \cap X_2 \cap X_3$ is a closed surface.  A trisection is uniquely determined by its \emph{spine}, $H_{12} \cup H_{23} \cup H_{31}$, and the spine of a trisection can be encoded with a \emph{trisection diagram} $(\A,\n,\g)$, a collection of three cut systems $\A,\n,\g$ on the surface $\Sigma$ yielding the three handlebodies $H_{31}$, $H_{12}$, $H_{23}$, respectively. (A \emph{cut system} in a genus $g$ surface $\Sigma$ is a collection of $g$ pairwise disjoint curves cutting $\Sigma$ into a planar surface, and attaching 2--handles to $\Sigma$ along a cut system yields a handlebody.)  Sometimes it will be useful to assign a complexity to a trisection $\Tt$: If $g$ is the genus of the central surface $\Sigma$ and $k_i$ is the genus of the 1--handlebody $X_i$, we call $\Tt$ a $(g;k_1,k_2,k_3)$--trisection.  In the case that $k_1 = k_2 = k_3$, we call $\Tt$ a $(g,k)$--trisection (with $k = k_1$).

A collection of properly embedded arcs $\tau = \{\tau_i\}$ in the handlebody $H$ is \emph{trivial} if there is an isotopy carrying $\tau$ into $\pd H$.  Equivalently, there is a collection of pairwise disjoint disks $\Delta=\{\Delta_i\}$, called \emph{bridge disks}, such that $\pd \Delta_i$ is the endpoint union of $\tau_i$ and an arc $\tau_i'$ in $\pd H$.  The arc $\tau_i'$ is called a \emph{shadow} of $\tau_i$.  We also call a collection of trivial arcs a \emph{trivial tangle}.  Let $L$ be a link in a 3--manifold $Y$.  A \emph{bridge splitting} of $(Y,L)$ is a decomposition $(Y,L) = (H_1,\tau_1) \cup_{\Sigma} (H_2,\tau_2)$, where $H_i$ is a handlebody containing a trivial tangle $\tau_i$ and $\Sigma = H_1 \cap H_2$.  It is well-known that every pair $(Y,L)$ admits a bridge splitting.

Moving to dimension four, a collection $\Dd$ of properly embedded disks in a 1--handlebody $V$ is \emph{trivial} if the disks $\Dd$ are simultaneously isotopic into $\pd V$.  Let $\Kk$ be a knotted surface in a closed 4--manifold $X$.

\begin{definition}
	A \emph{generalized bridge trisection} of the pair $(X,\Kk)$ is a decomposition $(X,\Kk) = (X_1,\Dd_1) \cup (X_2, \Dd_2) \cup (X_3,\Dd_3)$, where $X = X_1 \cup X_2 \cup X_3$ is a trisection, $\Dd_i$ is a collection of trivial disks in $X_i$, and for $i\not=j$, the arcs $\tau_{ij} = \Dd_i \cap \Dd_j$ form a trivial tangle in $H_{ij}$.
\end{definition}

  In~\cite{Meier-Zupan_Bridge_2017}, the authors proved that every knotted surface in $S^4$ admits a generalized bridge trisection in which the underlying trisection of $S^4$ is the standard genus zero trisection.  We will refer to such a decomposition simply as a \emph{bridge trisection}.  The present article extends this theorem to a given trisection of an arbitrary 4--manifold.  
  
\begin{definition}
	If $\Tt$ is a trisection of $X$ given by $X = X_1 \cup X_2 \cup X_3$, and $\Kk$ is a knotted surface in $X$ such that $(X,\Kk) = (X_1, \Kk \cap X_1) \cup (X_2, \Kk \cap X_2) \cup (X_3, \Kk \cap X_3)$ is a generalized bridge trisection, we say that $\Kk$ is in \emph{bridge position} with respect to $\Tt$.
\end{definition}  

The union $(H_{12},\tau_{12}) \cup (H_{23},\tau_{23}) \cup (H_{31},\tau_{31})$ is called the \emph{spine} of a generalized bridge trisection.  As is the case with trisections, bridge trisections are uniquely determined by their spines.  Fortunately, the same is true for generalized bridge trisections.  To prove this fact, we need the following lemma.

\begin{lemma}\label{unq}
Let $V$ be a 1--handlebody, and let $L$ be an unlink contained in $\pd V$.  Up to an isotopy fixing $L$, the unlink $L$ bounds a unique collection trivial disks in $V$.
\end{lemma}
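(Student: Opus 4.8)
The plan is to reduce the statement to the analogous uniqueness result for trivial disks in a ball $B^4$, which is well known (it follows, for instance, from the fact that the complement of a trivial disk system in $B^4$ is a connected sum of $S^1 \times B^3$'s and from Laudenbach--Poénaru-type uniqueness of such fillings, or more directly from isotopy-extension arguments in the style of Livingston). First I would fix a cut system of disks $E = \{E_1, \dots, E_k\}$ for the $1$--handlebody $V$ and arrange, after an isotopy fixing $L \subset \pd V$, that $L$ is disjoint from a collar $\pd V \times [0,1]$ of the boundary and that each $E_j$ meets $L \times [0,1]$ in a standard way; cutting $V$ along $E$ produces a ball $B^4$ in which $L$ sits in the boundary as an unlink (after the standard identification along the scars). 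The point is that any collection of trivial disks $\Dd$ in $V$ bounded by $L$ can be isotoped, rel $L$, to be disjoint from the cocores of the $1$--handles, i.e. to lie in the complementary ball.

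The key steps, in order, are: (1) Isotope $\Dd$ off the cocores of the $1$--handles of $V$. Since $\Dd$ is trivial, $\Dd$ is isotopic rel $\pd$ into $\pd V$; pushing $\Dd$ into a collar of $\pd V$ immediately makes it disjoint from the cocore $3$--balls, which we may take to lie in the interior. This exhibits $\Dd$ inside the ball $B^4$ obtained by cutting $V$ along $E$, with the same boundary unlink $L$. (2) Apply the ball case: any two collections of trivial disks in $B^4$ with the same boundary unlink in $S^3$ are isotopic rel boundary. Here I would cite (or reprove via standard disk-shrinking / Morse-theory arguments on the radial function) that a trivial disk system in $B^4$ bounded by a given unlink is unique up to isotopy fixing the unlink — the essential input is that "trivial" means "simultaneously isotopic into the boundary," and two such push-offs differ by an isotopy of the boundary unlink, which extends. (3) Reassemble: an isotopy of $\Dd$ in the cut-open ball that fixes $L$ and is supported away from the scars $E \times \{\pm\}$ glues back up to an isotopy in $V$ fixing $L$, completing the proof.

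The main obstacle is step (1): making precise that a collection of trivial disks can be isotoped off the cocores of the $1$--handles while keeping the boundary fixed. The subtlety is that "$\Dd$ isotopic into $\pd V$ rel $\pd$" is exactly the definition of trivial, so this should be nearly immediate — one takes the ambient isotopy carrying $\Dd$ into $\pd V$, composes with pushing into the interior collar, and observes the cocores can be chosen disjoint from the image. I would want to state carefully that this isotopy can be taken to fix $L = \pd \Dd$ pointwise throughout (not merely setwise), which is where a relative isotopy-extension argument enters; once $\Dd$ and $\Dd'$ both lie in the complementary ball with boundary $L$, step (2) finishes. A secondary point worth a sentence is that the ball obtained by cutting along $E$ is independent, up to isotopy rel $L$, of the choice of cut system $E$ disjoint from $L$ — but since we only need existence of one good $E$ and then invoke the ball uniqueness, this is not strictly necessary for the argument.
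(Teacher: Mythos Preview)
Your overall strategy matches the paper's: reduce to the known uniqueness in $B^4$ (Livingston) by isotoping the disks off a cut system of properly embedded 3--balls. But your step~(1) has a genuine gap. The cut 3--balls $E_j$ are \emph{properly embedded} in $V$, with boundary 2--spheres $\pd E_j \subset \pd V$; they cannot ``lie in the interior.'' Consequently, pushing $\Dd$ into a boundary collar does \emph{not} automatically make it disjoint from the $E_j$: the boundary-parallel image $\Dd_* \subset \pd V$ may still meet the spheres $\Sss = \bigcup_j \pd E_j$ in circles. Nor can you simply ``choose the cocores disjoint from the image,'' since you have already fixed $E$ (and in any case must use the \emph{same} cut system for both $\Dd$ and $\Dd'$ in order to land in the same ball). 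This is precisely the point that requires work, and you have correctly flagged it as the main obstacle but then dismissed it too quickly.

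The paper handles it as follows: fix $\Sss$ disjoint from $L$, choose $\Dd_* \subset \pd V$ minimizing $|\Dd_* \cap \Sss|$ among boundary-parallel copies of $\Dd$, and argue by an innermost-disk surgery that this minimum is zero. The essential input is that every embedded 2--sphere in $\pd V \cong \#^k(S^1 \times S^2)$ bounds a properly embedded 3--ball in $V$ --- this follows from Laudenbach--Po\'enaru, since capping $\pd V$ off with 3-- and 4--handles to form a 1--handlebody is unique. An innermost circle of $\Dd_* \cap \Sss$ then bounds a 2--sphere $D \cup E$ which in turn bounds such a 3--ball, through which one isotopes to reduce the intersection count. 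Once this argument (or an equivalent one) is supplied, your steps~(2) and~(3) go through and the proof is essentially the paper's.
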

\begin{proof}
It is well-known that the statement is true when $V$ is a 4--ball~\cite{Livingston_Surfaces_1982}.  Suppose that $\Dd$ and $\Dd'$ are two collections of trivial disks properly embedded in $V$ such that $\pd \Dd = \pd \Dd' = L$.  Let $\Bb$ denote a collection of properly embedded 3--balls in $V$ cutting $V$ into a 4--ball, and let $\Sss = \pd \Bb$.  Since $L$ is an unlink, we may isotope $\Sss$ in $\pd V$ (along with $\Bb$ in $V$) so that $L \cap \Sss = \emp$.  Since $\Dd$ is a collection of boundary parallel disks in $V$, there exists a set of disks $\Dd_* \subset \pd V$ isotopic to $\Dd$ via an isotopy fixing $L$.  Choose $\Dd_*$ so that the number of components of $\Dd_* \cap \Sss$ is minimal among all such sets of disks.

We claim that $\Dd_* \cap \Sss = \emp$.  First, we observe that every embedded 2--sphere $S \subset \pd V$ bounds a properly embedded 3--ball in $V$:  If $S$ does not bound a 3--ball in $\pd V$, then either $S$ is an essential separating sphere, splitting $\pd V$ into two components, each of which is a connected sum of copies of $S^1 \X S^2$, or $S$ is an essential nonseparating sphere, and there is an $S^1 \X S^2$ summand of $\pd V$ in which $S$ is isotopic to $\{\text{pt}\} \X S^2$.  In either case, we can cap off $\pd V$ with 4--dimensional 3--handles and a 4--handle to obtain a 1--handlebody in which $S$ bounds a 3--ball.  However, this capping off process is unique~\cite{Laudenbach-Poenaru_A-note_1972}, and thus $S$ bounds a 3--ball in $V$ as well.

To prove the claim, suppose by way of contradiction that $\Dd_* \cap \Sss \neq \emp$, and choose a curve $c$ of $\Dd_* \cap \Sss$ that is innermost in a sphere component of $\Sss$, so $c$ bounds a disk $E$ in this component such that $\text{int}(E) \cap \Dd_* = \emp$.  Note that $c$ also bounds a sub-disk $D$ of a component of $\Dd_*$, so $S = E \cup D$ is a 2--sphere embedded in $\pd V$.  By the above argument, $S$ bounds a 3--ball $B$ that is properly embedded in $V$; thus, $\text{int}(B) \cap \Dd_* = \emp$.  It follows that there is an isotopy of $\Dd_*$ through $B$ in $V$ that pushes $D$ onto $E$.  If $\Dd'_*$ is the set of disks obtained from $\Dd_*$ by removing $D$ and gluing on a copy of $E$ (pushed slightly off of $\Sss$), then $\Dd_*'$ is isotopic to $\Dd_*$, with $|\Dd'_* \cap \Sss| < |\Dd_* \cap \Sss|$, a contradiction.

It follows that $\Dd_* \cap \Sss = \emp$, and we conclude that after isotopy $\Dd$ is contained in the 4--ball $W = V\setminus\nu(\Bb)$.  Similarly, we can assume that after isotopy $\Dd'$ is contained in $W$.  It now follows from~\cite{Livingston_Surfaces_1982} that $\Dd$ and $\Dd'$ can are isotopic, as desired.
\end{proof}

\begin{corollary}
A generalized bridge trisection is uniquely determined by its spine.
\end{corollary}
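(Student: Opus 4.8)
The corollary says: a generalized bridge trisection of $(X, \Kk)$ is determined by its spine $(H_{12}, \tau_{12}) \cup (H_{23}, \tau_{23}) \cup (H_{31}, \tau_{31})$. So I need to show that if two generalized bridge trisections $(X_i, \Dd_i)$ and $(X_i', \Dd_i')$ have the same (isotopic) spine, then they are isotopic (as decompositions of the pair). The underlying trisection part is already known: Gay–Kirby showed a trisection is determined by its spine. So after an isotopy of $X$ I can assume the ambient trisections agree: $X_i = X_i'$ and $H_{ij} = H_{ij}'$, with $\tau_{ij}$ isotopic to $\tau_{ij}'$ inside $H_{ij}$. I want to upgrade this to: $\Dd_i$ isotopic to $\Dd_i'$ inside $X_i$, compatibly across the three pieces.

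Then the key point is: each $\Dd_i$ is a collection of trivial disks in the 1-handlebody $X_i$ whose boundary is the link $\partial \Dd_i = \tau_{ij} \cup \tau_{ik} \subset \partial X_i = H_{ij} \cup H_{ik}$. A trivial tangle union of trivial tangles along a Heegaard surface is an unlink in the 3-manifold $\partial X_i = \#^{k_i}(S^1 \times S^2)$ — actually I should be a little careful: it's an unlink because each $\tau_{ij}$ is trivial and we're gluing along the surface, but this requires that the shadows can be chosen disjoint; this is a standard fact (a "bridge splitting" of the unlink), and in any case $\partial \Dd_i$ bounds the disks $\Dd_i$ in $X_i$, which shows it is an unlink by Lemma~\ref{unq}'s setup. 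So I can apply Lemma~\ref{unq}: an unlink in $\partial V$ bounds a \emph{unique} collection of trivial disks in the 1-handlebody $V$, up to isotopy fixing the link.

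So the argument runs: first isotope so the spines agree on the nose (trisection-determined-by-spine plus an ambient isotopy making $\tau_{ij}$ literally equal to $\tau_{ij}'$ — this last isotopy of the link in $H_{ij}$ is supported in a neighborhood of $H_{ij}$ and extends to an ambient isotopy of $X$ carrying $X_i$ to itself). Now $\partial \Dd_i = \partial \Dd_i'$ as a fixed unlink $L_i$ in $\partial X_i$. Apply Lemma~\ref{unq} in each $X_i$ separately: there is an isotopy of $X_i$, fixing $L_i$ pointwise (hence fixing $\partial X_i$, or at least fixable to be supported in the interior), taking $\Dd_i$ to $\Dd_i'$. Because these isotopies are each supported in the interior of $X_i$ and fix the spine, they glue to a global isotopy of $X$ fixing the spine and taking $\Dd = \Dd_1 \cup \Dd_2 \cup \Dd_3$ to $\Dd'$. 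That gives the isotopy of pairs.

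**Main obstacle.** The subtle step is making the three piecewise isotopies compatible — i.e. ensuring the isotopy from Lemma~\ref{unq} in $X_i$ can be taken to fix $\partial X_i$ pointwise (not just fix the link $L_i$), so that the three isotopies patch together continuously across the $H_{ij}$. The statement of Lemma~\ref{unq} only guarantees "up to isotopy fixing $L$," so I would need to either strengthen it, or observe that since $\Dd_i, \Dd_i'$ are both \emph{trivial} (boundary-parallel) with the \emph{same} boundary, the ambient isotopy can be arranged to be the identity near $\partial X_i$: push both disks into collars of $\partial X_i$ and observe that two boundary-parallel disk systems with common boundary differ by an isotopy of the collar $\partial X_i \times [0,1]$ rel $\partial X_i \times \{0\}$, which is supported away from the boundary. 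This collar argument, combined with Lemma~\ref{unq} applied in the complementary 4-ball region, is exactly the kind of routine-but-fiddly point that the proof will need to address; everything else is a direct assembly of Gay–Kirby uniqueness and Lemma~\ref{unq}.
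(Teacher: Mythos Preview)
Your proposal is correct and follows essentially the same approach as the paper: use Gay--Kirby uniqueness for the ambient trisection, then apply Lemma~\ref{unq} in each $X_i$ to pin down $\Dd_i$ from its boundary unlink $\partial\Dd_i = \tau_{ij}\cup\tau_{ik}$. The paper's proof is in fact briefer than yours and does not explicitly address the gluing subtlety you flag (whether the three isotopies can be taken to fix $\partial X_i$ pointwise so as to patch together); your collar argument is a reasonable way to handle this, and the paper simply elides the point.
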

\begin{proof}
The set $H_{12} \cup H_{23} \cup H_{31}$ determines the trisection of the underlying 4--manifold $X$, and pairs of the trivial tangles $\tau_{12}$, $\tau_{23}$, and $\tau_{31}$ determine the unlinks $\pd \Dd_i \subset \pd X_i$.  By Lemma~\ref{unq}, the disks $\Dd_i$ are determined up to isotopy by $\pd \Dd_i$, and thus the spine of the bridge trisection yields the remainder of its components.
\end{proof}

We also observe that we can compute the Euler characteristic of a surface $\Kk$ from the parameters of a generalized bridge trisection.  If $\Kk \subset X$ is in bridge position with respect to a trisection $\Tt$ of $X$, we will set the convention that $c_i = |\Kk \cap X_i|$ and $b = |\Kk \cap H_{ij}| = |\Kk \cap \Sigma|/2$.

\begin{lemma}\label{echar}
Suppose that $\Kk \subset X$ is in bridge position with respect to a trisection $\Tt$.  Then
\[ \chi(\Kk) = c_1 + c_2 + c_3 - b.\]
\end{lemma}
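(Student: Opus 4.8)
The plan is to compute $\chi(\Kk)$ by building $\Kk$ as a CW-complex (or handle decomposition of the surface) that is adapted to the trisection, so that the count of cells in each dimension can be read off directly from the parameters $c_1, c_2, c_3$, and $b$. First I would recall that $\Kk$ decomposes as $(\Kk \cap X_1) \cup (\Kk \cap X_2) \cup (\Kk \cap X_3)$, where each $\Kk \cap X_i = \Dd_i$ is a collection of $c_i$ trivial disks, and the pairwise intersections $\Kk \cap H_{ij} = \tau_{ij}$ are trivial tangles with $b$ arcs each (using the stated convention), while the triple intersection $\Kk \cap \Sigma$ consists of $2b$ points. The idea is that the $2b$ points of $\Kk \cap \Sigma$ will serve as the $0$-cells, the $3b$ total arcs of $\tau_{12} \cup \tau_{23} \cup \tau_{31}$ will serve as the $1$-cells, and the $c_1 + c_2 + c_3$ disks will serve as the $2$-cells.

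The key steps, in order, are: (i) isotope $\Kk$ so that it meets $\Sigma$ transversely in $2b$ points and meets each $H_{ij}$ in the trivial tangle $\tau_{ij}$ with exactly $b$ arcs, which is exactly the data of the generalized bridge trisection; (ii) observe that removing a neighborhood of $\Sigma$ from $\Kk$ and of each $H_{ij}$ cuts $\Kk$ into the three disk systems $\Dd_i$, and that $\Kk$ is recovered by gluing $\Dd_1, \Dd_2, \Dd_3$ along the arcs $\tau_{ij}$; (iii) assemble this into a genuine CW-structure on $\Kk$ with $V = 2b$ vertices (the points of $\Kk \cap \Sigma$), $E = 3b$ edges (the $b$ arcs in each of $\tau_{12}, \tau_{23}, \tau_{31}$, each arc running between two of the vertices), and $F = c_1 + c_2 + c_3$ faces (the trivial disks, each of which is a $2$-cell whose boundary is a cycle of arcs and vertices on $\partial X_i$); (iv) conclude $\chi(\Kk) = V - E + F = 2b - 3b + (c_1 + c_2 + c_3) = c_1 + c_2 + c_3 - b$.

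The main obstacle I expect is step (iii): checking that this data really does form a legitimate CW-complex homeomorphic to $\Kk$ — in particular that each trivial disk $\Dd_i^{(j)}$ is attached along its boundary circle in a way consistent with the $1$-skeleton, and that every arc of $\tau_{ij}$ is incident to exactly the right faces (one from $\Dd_i$ and one from $\Dd_j$, since $\tau_{ij} = \Dd_i \cap \Dd_j$) and the right vertices (its two endpoints lie on $\Sigma$). One must also confirm there are no stray cells: the interiors of the disks $\Dd_i$ lie in the interiors of the pieces $X_i$ and hence are disjoint from $\Sigma$ and from the $H_{ij}$, so they contribute no lower-dimensional cells beyond their boundaries, and the arcs $\tau_{ij}$ meet $\Sigma$ only in their endpoints. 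Once the incidence bookkeeping is verified, the Euler characteristic computation is immediate and independent of how the arcs are combinatorially arranged, so the formula follows. A slicker alternative, which I would mention, is to apply inclusion–exclusion directly: $\chi(\Kk) = \sum_i \chi(\Kk \cap X_i) - \sum_{i<j} \chi(\Kk \cap H_{ij}) + \chi(\Kk \cap \Sigma) = (c_1 + c_2 + c_3) - 3b + 2b = c_1 + c_2 + c_3 - b$, using $\chi(\text{disk}) = 1$, $\chi(\text{arc}) = 1$, $\chi(\text{point}) = 1$, though this requires a Mayer–Vietoris-type justification for the triple-overlap decomposition, which is really the same incidence check in disguise.
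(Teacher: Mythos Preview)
Your proposal is correct and follows exactly the same approach as the paper: the paper's proof is the single sentence ``the components of $\Kk$ induce a cell decomposition with $2b$ 0-cells, $3b$ 1-cells, and $c_1+c_2+c_3$ 2-cells,'' which is precisely your CW-structure (iii) and computation (iv). Your careful incidence bookkeeping and the inclusion--exclusion alternative are welcome elaborations, but the underlying argument is identical.
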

\begin{proof}
If $\Kk$ is in bridge position, the components of $\Kk$ induce a cell decomposition with $2b$ 0-cells, $3b$ 1-cells, and $c_1+ c_2 + c_3$ 2-cells.
\end{proof}

As with trisections, we may wish to assign a complexity to generalized bridge trisections.  The most specific designation has eight parameters.  If $\Tt$ is a generalized bridge trisection, and the underlying trisection has complexity $(g;k_1,k_2,k_3)$, we say that the complexity of the generalized bridge trisection is $(g;k_1,k_2,k_3;b;c_1,c_2,c_3)$.  In the case that $k = k_1 = k_2 = k_3$ and $c = c_1 = c_2 = c_3$, we say that $\Tt$ is \emph{balanced} and denote its complexity by $(g,k,b,c)$.  Even more generally, a $(g,b)$--generalized bridge trisection refers to a generalized bridge trisection with a genus $g$ central surface that meets $\Kk$ in $2b$ points.  In Section~\ref{sec:exs}, we classify all $(1,1)$--generalized bridge trisections.  If the underlying trisection is the genus zero trisection of $S^4$, as in~\cite{Meier-Zupan_Bridge_2017}, we call $\Tt$ a $(b;c_1,c_2,c_3)$--bridge trisection or a $(b,c)$--bridge trisection in the balanced case.

\section{Examples}\label{sec:exs}

Before including proofs in the next section, we present several examples of generalized bridge trisections and shadow diagrams of knotted surfaces in 4--manifolds.

\subsection{Shadow diagrams}\label{subsec:shadow}\ 


Just as a trisection diagram determines the spine of a trisection, a type of diagram called a \emph{tri-plane diagram} determines the spine of a bridge trisection, as shown in~\cite{Meier-Zupan_Bridge_2017}.  In dimension three, it is generally more difficult to draw diagrams for knots in manifolds other than $S^3$, and unfortunately tri-plane diagrams do not naturally extend from bridge trisections to generalized bridge trisections.  Instead, we employ a new structure called a \emph{shadow diagram}.  Let $\tau$ be a trivial tangle in a handlebody $H$.  A \emph{curve-and-arc system} $(\A,a)$ determining $(H,\tau)$ is a collection of pairwise disjoint simple closed curves $\A$ and arcs $a$ in $\Sigma = \pd H$ such that $\A$ determines $H$ and $a$ is a collection of shadow arcs for $\tau$.  Note that curves in $\A$ and arcs in $a$ can be chosen to be disjoint by standard cut-and-paste arguments using compressing disks for $H$ and bridge disks for $\tau$.  A \emph{shadow diagram} for a generalized bridge trisection $\Tt$ is a triple $((\A,a), (\n,b), (\g,c))$ of curve-and-arc systems determining the spine $(H_{31},\tau_{31}) \cup (H_{12},\tau_{12}) \cup (H_{23},\tau_{23})$ of $\Tt$. Since every trivial tangle in a handlebody can be defined by a curve-and-arc system, it is clear that Corollary~\ref{maincor} follows immediately from Theorem~\ref{mainthm}.

Of course, there are infinitely many different shadow diagrams corresponding to the same generalized bridge trisection, but these diagrams may be related by moves which we call disk-slides.  
We define a \emph{compressing disk} for a trivial tangle $(H,\tau)$ to be a properly embedded disk $D$ in $H \setminus \tau$ such that $\pd D \subset \pd H$ and $D$ is not boundary parallel in $H\setminus\tau$.  Suppose that $a_1$ is a curve that bounds a compressing disk for $(H,\tau)$ or a shadow arc for some $\tau_1 \in \tau$, and let $a_2$ be a curve that bounds a compressing disk for $(H,\tau)$.
A curve or arc $a_1'$ created by banding $a_1$ to $a_2$ along an embedded arc in $\Sigma$ is said to be the result of a \emph{disk-slide} of $a_1$ over $a_2$.
It is straightforward to see if $a_1'$ is the result of a disk-slide of $a_1$ over $a_2$, then $a_1'$ bounds a compressing disk or is the shadow of the trivial arc $\tau_1$ in $(H,\tau)$.

\begin{proposition}
	Any two shadow diagrams for a fixed generalized bridge trisection $\Tt$ are related by a sequence of disk-slides within the respective curve-and-arc systems.
\end{proposition}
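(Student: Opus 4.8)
The plan is to reduce the proposition to a uniqueness statement about a single handlebody--tangle pair, to recast that statement in terms of complete meridian systems of the tangle complement, and then to invoke a refinement of the classical fact that any two complete meridian systems of a handlebody are related by handle slides and isotopy.

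First, a shadow diagram for $\Tt$ consists of three curve-and-arc systems, one for each of the tangles $(H_{ij},\tau_{ij})$ in the spine of $\Tt$, and a disk-slide performed within one of them leaves the other two unchanged. Since $\Tt$ is determined by its spine, two shadow diagrams for $\Tt$ present the same triple of tangles, so it suffices to prove: if $(\A,a)$ and $(\A',a')$ are curve-and-arc systems determining the same trivial tangle $(H,\tau)$ in a handlebody $H$, then they are related by a sequence of disk-slides and an isotopy of $\Sigma=\partial H$. I take the curve-and-arc systems so that the curves of $\A$ and $\A'$ bound compressing disks for $(H,\tau)$, which may be arranged as follows: after an isotopy pushing $\tau$ along its bridge disks so that it lies close to the shadow arcs $a$, the disks in $H$ bounded by $\A$ meet the bridge disks only in circles, which are removed by innermost-disk surgeries.

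Next I pass to the tangle complement. Set $H'=H\setminus\nu(\tau)$, and write $g$ for the genus of $H$ and $b$ for the number of arcs of $\tau$; since $\tau$ is trivial, $H'$ is a handlebody of genus $g+b$, and its boundary $\Sigma'$ carries $b$ distinguished curves $\mu_1,\dots,\mu_b$, the meridians of the arcs of $\tau$. Given $(\A,a)$ with bridge disks $\Delta_i$, each disk $\Delta_i\setminus\nu(\tau)$ is a compressing disk for $H'$ whose boundary $\bar a_i$ meets $\mu_i$ exactly once and is disjoint from $\mu_j$ for $j\neq i$, and $\A\cup\{\bar a_1,\dots,\bar a_b\}$ is a complete meridian system of $H'$; conversely, every complete meridian system of $H'$ in which $b$ of its curves are dual to the distinct $\mu_i$ in this way and the remaining $g$ curves are disjoint from all the $\mu_i$ comes from a curve-and-arc system for $(H,\tau)$. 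This gives a bijection, up to isotopy, between curve-and-arc systems for $(H,\tau)$ and these \emph{$\mu$-adapted} complete meridian systems of $H'$, under which a disk-slide corresponds to a handle slide of the meridian system that never slides over one of the $b$ curves dual to a $\mu_i$; such a $\mu$-adapted handle slide again produces a $\mu$-adapted system.

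It then remains to prove the relative statement that any two $\mu$-adapted complete meridian systems of $H'$ are related by $\mu$-adapted handle slides together with isotopies of $\Sigma'$ fixing $\mu_1,\dots,\mu_b$. I would prove this by running the usual argument that any two complete meridian systems of $H'$ are related by handle slides while carrying $\mu_1,\dots,\mu_b$ along: whenever that argument requires sliding some curve over a curve $\delta$, one first arranges $\delta$ to be disjoint from every $\mu_i$ --- if necessary by sliding the curves dual to the $\mu_i$ over curves disjoint from all the $\mu_i$, to move them out of the way, which are themselves $\mu$-adapted slides --- so that every slide used is $\mu$-adapted. I expect this refinement to be the main obstacle: the slides produced by the unmodified argument may pass over one of the curves dual to the $\mu_i$, so one must show they can always be traded for $\mu$-adapted ones, and, in the other direction, one must check that a $\mu$-adapted handle slide really descends to an honest disk-slide in $\Sigma$ rather than to a band running over an arc of $\tau$. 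A more hands-on alternative avoids $H'$ entirely: put two systems of bridge disks in general position and remove their intersections by innermost-disk and outermost-arc surgeries, checking that each surgery alters a single shadow arc by one disk-slide over a compressing curve --- but this carries the same difficulty, now in the form of understanding how a shadow arc changes under an outermost-arc surgery on its bridge disk.
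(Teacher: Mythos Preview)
Your proposal is sound, and in fact considerably more detailed than what the paper provides: the paper's proof consists of the single observation that the $\tau=\emptyset$ case is Johannson's theorem, followed by the assertion that the general case ``follows from a standard cut-and-paste argument'' which is explicitly left to the reader.

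Your route through the tangle complement $H'=H\setminus\nu(\tau)$ is a genuine reformulation rather than the paper's intended argument.  The correspondence you set up between curve-and-arc systems for $(H,\tau)$ and $\mu$-adapted meridian systems of $H'$ is correct (note that $\mu_i$ is a longitude of the corresponding solid-torus summand of $H'$, not a meridian, so the winding ambiguity you might worry about when lifting a shadow arc does not arise: only one lift bounds a disk in $H'$, and the map really is a bijection).  The price of this reformulation is exactly the relative refinement of Johannson you isolate --- that $\mu$-adapted systems are related by $\mu$-adapted slides --- and your outline for proving it by rerouting the standard argument around the $\mu_i$ is plausible, though you are right that this is where the real work lies.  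One small point to tighten: for a $\mu$-adapted handle slide to produce another $\mu$-adapted system, the sliding arc itself must also avoid the $\mu_i$, not merely the curve being slid over; this should be made explicit.

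Your ``hands-on alternative'' --- putting two systems of bridge disks in general position and removing intersections by innermost-disk and outermost-arc surgeries --- \emph{is} the standard cut-and-paste argument the paper has in mind.  So the two approaches converge: the paper's sketch points directly at that computation, while your main proposal reaches the same destination after first trading arcs for curves in the drilled handlebody.  The tangle-complement route buys you access to the existing literature on meridian systems (and makes clear why disk-slides are the correct moves), at the cost of the $\mu$-adapted bookkeeping; the direct route avoids that bookkeeping but requires tracking how an outermost-arc surgery on a bridge disk changes its shadow, which, as you note, is the same difficulty in different clothing.
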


\begin{proof}
The proof follows from work of Johannson in the case that $\tau = \emp$~\cite{Johannson_Topology_1995}.  In general, it follows from a standard cut-and-paste argument.  A detailed proof is left as an exercise for the reader.
\end{proof}

\subsection{1--bridge trisections}\ 

One family which deserves special consideration is the collection of 1-bridge trisections, i.e., $(g,1)$--generalized bridge trisections.  If $\Kk$ has a such a splitting, then it intersects each sector $X_i$ of the underlying trisection in a single disk and each handlebody in a single arc.  In this case, we deduce from Lemma~\ref{echar} that $\Kk$ is a 2--knot, and the generalized bridge trisection is efficient.  Shadow diagrams for generalized bridge trisections of this type are particularly simple: A \emph{doubly-pointed trisection diagram} is a shadow diagram in which each curve-and-arc system contains exactly one arc. In this case, drawing the arc in the diagram is redundant, since there is a unique way (up to disk-slides) to connect the two points in the complement of any one of the sets of curves.

In Figure~\ref{fig:1bDiags}, we depict several doubly pointed diagrams for low-complexity examples.  First, we give diagrams for the two simplest complex curves in $\CP^2$, namely, the line $\CP^1$ and the quadric $\Cc_2$.  Next, we give diagrams for an $S^2$--fiber in $S^2\times S^2$ and the sphere $\Cc_{(1,1)}$ in $S^2 \wt{\times} S^2$ representing $(1,1)\in \Z\oplus\Z\cong H_2(S^2\wt{\times}S^2)$.   (See~\cite{Gompf-Stipsicz_4-manifolds_1999} for formal definitions.)  We postpone the justification for these diagrams until Section~\ref{sec:proofs}, in which we develop the machinery to make such justification possible.  

\begin{figure}[h!]
	\centering
	\includegraphics[width=.95\textwidth]{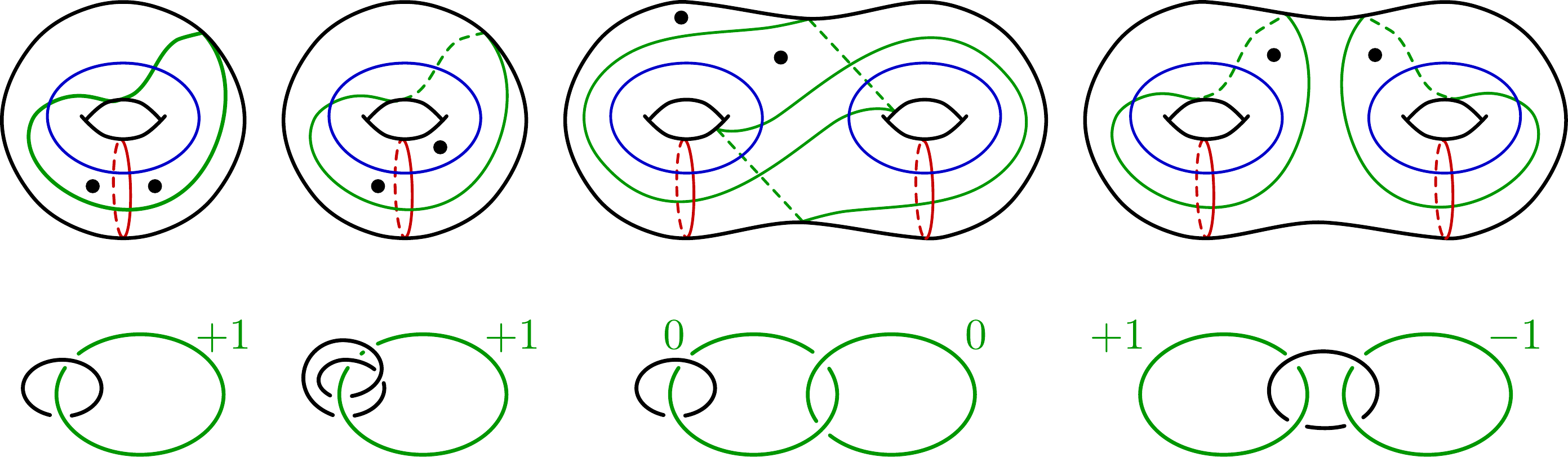}
	\caption{Some doubly-pointed trisection diagrams.  From left to right: $(\CP^2,\CP^1)$, $(\CP^2,\Cc_2)$, $(S^2\times S^2, S^2\times\{\ast\})$, and $(S^2\wt{\times} S^2, \Cc_{(1,1)})$. }
	\label{fig:1bDiags}
\end{figure}

By results in~\cite{Meier-Schirmer-Zupan_Classification_2016,Meier-Zupan_Bridge_2017,Meier-Zupan_Genus-two_2017}, any surface with a $(0,b)$--generalized bridge trisection (i.e. a $b$--bridge trisection) for $b < 4$ is unknotted in $S^4$.  In Section~\ref{sec:proofs}, we will prove the following classification result.

\begin{proposition}\label{oneone}\ 
\begin{enumerate}
	\item There are exactly two nontrivial $(1,1)$--knots (up to change of orientation and mirroring): $(\CP^2,\CP^1)$ and $(\CP^2,\Cc_2)$.
	\item Other than the two examples from (1), there is no nontrivial 2--knot in either $S^4$, $S^1\times S^3$, or $\CP^2$ that admits a $(2,1)$--generalized bridge trisection.
\end{enumerate}
\end{proposition}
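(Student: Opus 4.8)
The plan is to attack both parts by enumerating the possible doubly-pointed (or, in part (2), singly-pointed with potentially more curves) diagrams on the relevant central surface and eliminating all but the asserted examples. The key observation is that a $(g,1)$--generalized bridge trisection of a 2--knot $\Kk$ is determined by a doubly-pointed trisection diagram $((\A,a),(\n,b),(\g,c))$ on a genus $g$ surface $\Sigma$, where each of $\A,\n,\g$ is a cut system and each of $a,b,c$ is a single arc; moreover the arc is redundant by the remark preceding the proposition. So for part (1) I would start from the fact that the three cut systems $(\A,\n,\g)$ form a genus one trisection diagram, and the list of genus one trisection diagrams is completely understood: up to diffeomorphism the only genus one trisections are the standard ones for $S^4$, $S^1 \times S^3$, $\CP^2$, and $\overline{\CP^2}$. (This is Gay--Kirby's classification, or follows from the fact that a genus one cut system is a single essential curve on the torus, so the diagram is governed by a triple of slopes.) For each of these four ambient 4--manifolds I would then count, up to the disk-slide equivalence of Proposition before this one and change of orientation/mirroring, how many ways there are to place the two marked points; in $S^4$ one gets only the unknot, in $S^1 \times S^3$ one again gets only the trivial 2--knot, and in $\CP^2$ (resp. $\overline{\CP^2}$) one gets exactly the two surfaces pictured in Figure~\ref{fig:1bDiags}, namely $\CP^1$ and $\Cc_2$, which are mirror/orientation-related to the $\overline{\CP^2}$ cases. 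Triviality of the remaining diagrams can be checked by exhibiting a sequence of disk-slides reducing them to a standard unknotted picture, or by computing $\pi_1$ of the complement and invoking that a 2--knot in $S^4$ whose complement has a particularly simple handle structure is unknotted; the Euler characteristic computation of Lemma~\ref{echar} already forces $\chi(\Kk)=2$ so each $\Kk$ is a sphere.

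For part (2) the ambient 4--manifold is restricted to $S^4$, $S^1 \times S^3$, or $\CP^2$, but now the central surface has genus two, so the analysis is one step more involved: the underlying genus two trisection of each of these three 4--manifolds is, by the low-genus trisection classification (Meier--Zupan, \cite{Meier-Zupan_Genus-two_2017}), a stabilization of a genus $\le 1$ trisection. The main step is to show that a $(2,1)$--generalized bridge trisection whose spine sits on a stabilized trisection can be destabilized to a $(1,1)$ or $(0,1)$ one, i.e. the bridge-trisection structure "sees" the stabilizing handle and one can cancel it together with the accompanying portion of the surface. Concretely, in a stabilized genus two diagram there is a pair of curves (one from two of the three cut systems) meeting once in a standard Heegaard-stabilization configuration, disjoint from the third system; I would argue that the single shadow arc can be isotoped (via disk-slides) off the stabilizing region, after which the stabilization can be undone, reducing to part (1) together with the known fact (cited from \cite{Meier-Schirmer-Zupan_Classification_2016,Meier-Zupan_Bridge_2017}) that the only $(0,1)$-bridge trisection is the unknot in $S^4$. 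Since none of the three allowed ambient manifolds is $\CP^2$ with its genus one trisection appearing as the destabilization in a way that produces $\CP^1$ or $\Cc_2$ — because a genus two trisection of $\CP^2$ destabilizes to the genus one trisection of $\CP^2$, and that case is exactly (1) — the conclusion is that every $(2,1)$-generalized bridge trisection over these manifolds is trivial.

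The main obstacle I anticipate is the destabilization step in part (2): showing that the marked points and the shadow arc can always be pushed out of the stabilizing region of a genus two trisection diagram. This requires an innermost-disk / outermost-arc argument for the shadow arc against the stabilizing curves, analogous in spirit to the cut-and-paste reasoning in the proof of Lemma~\ref{unq}, but carried out at the level of curve-and-arc systems rather than disks in a 1--handlebody; one must be careful that reducing intersections with the stabilizing curves is compatible with keeping the arc a valid shadow (i.e. that the bridge disk survives). A secondary technical point is to make the "up to change of orientation and mirroring" clause precise, since mirroring exchanges the roles of $\CP^2$ and $\overline{\CP^2}$ and reverses the sign of the self-intersection of the surface; I would handle this by fixing orientations once and for all on the genus one diagrams and tracking how the disk-slide moves and marked-point placements behave under the relevant symmetries of the torus. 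Routine but necessary: verifying that the two diagrams in Figure~\ref{fig:1bDiags} labelled $(\CP^2,\CP^1)$ and $(\CP^2,\Cc_2)$ genuinely represent those pairs, which I would defer, as the text does, to the machinery of Section~\ref{sec:proofs}.
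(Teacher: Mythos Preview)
Your strategy is genuinely different from the paper's, and while your outline for part~(1) could plausibly be completed, part~(2) has a real gap.

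The paper does not enumerate doubly-pointed diagrams at all.  Instead it invokes Proposition~\ref{handlestuff} to turn a $(g,1)$--generalized bridge trisection into a self-indexing Morse function $h$ on $(X,\Kk)$ for which $h_{\Kk}$ has one minimum, no saddles, and one maximum.  If any $k_i$ equals $g$ one can permute so that $h$ has no index~2 critical points, and then $\Kk$ is the double of a trivial disk and hence unknotted; this instantly dispatches $S^4$ and $S^1\times S^3$ at genus one, and also $S^1\times S^3$ and the $(2;2,0,0)$--case of $S^4$ at genus two.  For $\CP^2$ (all $k_i=0$), the associated banded link diagram is an unknot $C\subset S^3$ together with a $(+1)$--framed unknot $T$, and the requirement is that $C$ stay unknotted after attaching the 2--handle along $T$.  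Viewing $T$ inside the solid torus $S^3\setminus\nu(C)$, this says $T$ admits a nontrivial solid-torus surgery, and the paper then appeals to Gabai's classification \cite{Gabai_1-bridge_1990} to conclude that $C\cup T$ is the unlink, the Hopf link, or $T(2,2)$, yielding the unknot, $\CP^1$, and $\Cc_2$.  Your torus-region count might reproduce this list by hand, so for part~(1) your route is more elementary in spirit, but you would still owe the reader the actual enumeration and the identification of each configuration with a specific surface.

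For part~(2) your destabilization step is exactly where the paper spends a deep theorem, and an innermost/outermost argument will not substitute for it.  The essential new case is the $(2;1,1,0)$--trisection of $S^4$: the induced handle decomposition is a cancelling 2--/3--pair, so the banded link diagram is an unknot $C\subset S^3$ together with a $0$--framed unknot $T$, and $C$ must remain unknotted after the 2-- and 3--handles are attached.  In the solid torus $S^3\setminus\nu(C)$ this says surgery on $T$ produces $(S^1\times D^2)\#(S^1\times S^2)$, and the paper invokes Scharlemann's theorem \cite{Scharlemann_Producing_1990} to conclude that $T$ lies in a $3$--ball and is unknotted there.  Translated to your picture, ``sliding the shadow arc off the stabilizing region'' is the assertion that $T$ can be isotoped into a ball in the solid torus; there is no elementary diagrammatic reason this should hold, and indeed ruling out an essentially-embedded $T$ with such a reducing surgery is precisely the content of Scharlemann's result.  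Finally, your fallback of computing $\pi_1$ of the complement is not a valid endgame: the unknotting conjecture for 2--knots is open, so a simple fundamental group does not by itself force triviality.
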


On the other hand, there are many 2--knots admitting $(3,1)$--generalized bridge trisections:  Perform three meridional stabilizations (defined in Section~\ref{sec:proofs}) on any $(4,2)$--bridge trisection, of which there are infinitely many~\cite{Meier-Zupan_Bridge_2017}.  We offer the following as worthwhile problems.

\begin{problems}\label{prob:class}\ 
	\begin{enumerate}
		\item Classify 2--knots admitting $(2,1)$--generalized bridge trisections.
		\item Classify projective planes admitting $(1,2)$--generalized bridge trisections.
	\end{enumerate}
\end{problems}

With regard to (2) of Problem~\ref{prob:class}, Figure~\ref{fig:proj} shows a $(2,1)$--shadow diagram for the standard projective (real) plane $(\CP^2, \RP^2)$ that is the lift  of the standard cross-cap in $S^4$ under the branched double covering.  More generally, consider a surface knot (or link) $(X,\Kk)$, and let $X_n(\Kk)$ denote the $n$--fold cover of $X$, branched along $\Kk$.  Let $\widetilde\Kk_n$ denote the lift of $\Kk$ under this covering.

\begin{figure}[h!]
	\centering
	\includegraphics[width=.95\textwidth]{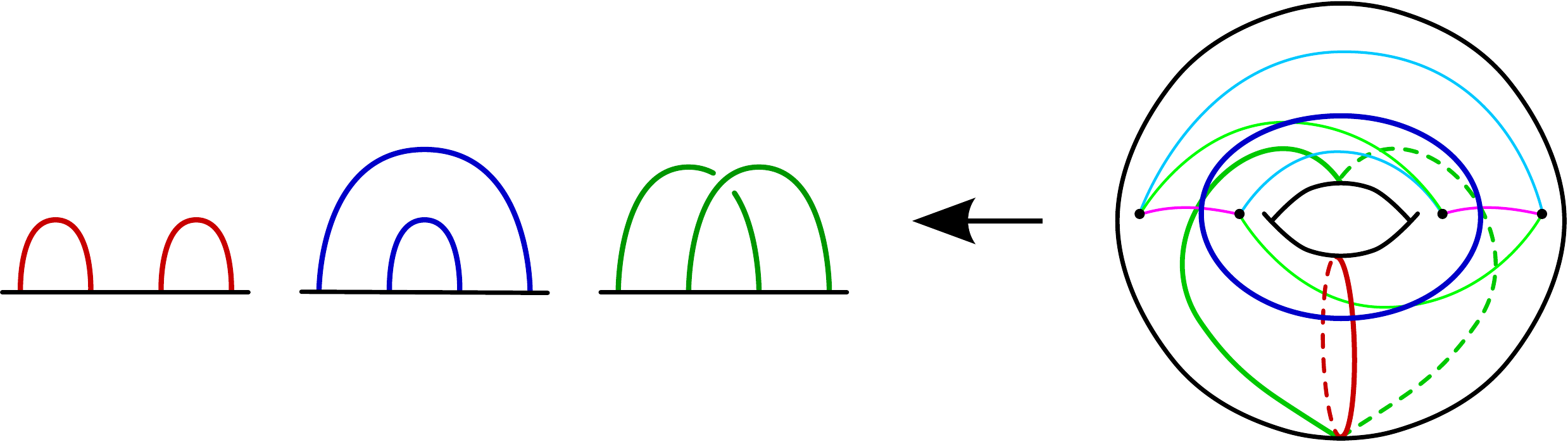}
	\caption{The branched double covering projection relating the standard cross-cap $(S^4,\Pp_+)$ and its cover $(\CP^2,\RP^2)$. }
	\label{fig:proj}
\end{figure}

\begin{proposition}\label{prop:branch}
	If $(X, \Kk)$ admits a $(g;k_1,k_2,k_3;b;c_1,c_2,c_3)$--generalized bridge trisection, then $(X_n(\Kk),\widetilde\Kk)$ admits a $(g';k_1',k_2',k_3';b,c_1,c_2,c_3)$--generalized bridge trisection, where $g' = ng+(n-1)(b-1)$ and $k_i'=nk_i+(n-1)(c_i-1)$.
\end{proposition}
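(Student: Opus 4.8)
The plan is to build the branched-cover bridge trisection directly from a branched-cover construction applied to each piece of the given generalized bridge trisection of $(X,\Kk)$. The key principle is that the $n$--fold cyclic branched cover is a ``local'' operation on the pieces of the trisection, provided each piece meets $\Kk$ in a standard (trivial) way, which is exactly what bridge position guarantees.

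First I would set up the covering piece by piece. Write the generalized bridge trisection as $(X,\Kk) = (X_1,\Dd_1) \cup (X_2,\Dd_2) \cup (X_3,\Dd_3)$ with spine $(H_{12},\tau_{12}) \cup (H_{23},\tau_{23}) \cup (H_{31},\tau_{31})$ and central surface $(\Sigma, \Sigma \cap \Kk)$, where $|\Sigma \cap \Kk| = 2b$ and $|\Dd_i| = c_i$. The $n$--fold branched cover $X_n(\Kk)$ is assembled by taking the $n$--fold cyclic cover of each $X_i$ branched along $\Dd_i$, of each $H_{ij}$ branched along $\tau_{ij}$, and of $\Sigma$ branched along $\Sigma \cap \Kk$, and gluing them along the induced covering maps. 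I would first verify that these branched covers exist (one needs a surjection from the relevant fundamental group to $\Z/n$; since $\Dd_i$ is a trivial collection of disks in the $1$--handlebody $X_i$, and $\tau_{ij}$ a trivial tangle in the handlebody $H_{ij}$, the meridian-total-linking-number homomorphism works just as in the classical case) and that the local pieces glue together consistently along the spine, so the result is a genuine trisection of $X_n(\Kk)$.

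Next I would compute the topology of each covering piece, which is where the Riemann–Hurwitz-type bookkeeping happens. For the central surface: the $n$--fold cover of a genus $g$ surface branched over $2b$ points has Euler characteristic $n(2-2g) - (n-1)(2b)$, giving genus $g' = ng + (n-1)(b-1)$, which is the stated formula. For the handlebody pieces $H_{ij}$: the $n$--fold cover of a genus $g$ handlebody branched along a $b$--strand trivial tangle is again a handlebody (since a branched cover of a $3$--ball over a trivial tangle is a handlebody — one can see this by cutting along the bridge disks), and a boundary/Euler-characteristic count shows its genus equals $g'$ as well, so the cover is still of balanced central-surface genus $g'$. Crucially, the cover of a trivial tangle is a trivial tangle: cut $H_{ij}$ along its bridge disks $\Delta$ to get a $3$--ball with a boundary-parallel tangle; the cover of this branched along the boundary-parallel arcs is again a collection of $3$--balls (or a $3$--ball) with boundary-parallel arcs, and reassembling shows $\widetilde\tau_{ij}$ is trivial in $\widetilde H_{ij}$. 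Note that branching multiplies the strand count by $1$ on each fundamental domain but the strands from different sheets are glued in pairs at the branch points, so the new bridge number is still $b$ — this matches the claim that $b$ is unchanged. Similarly, for the $4$--dimensional sectors $X_i$: the $n$--fold cover of a $1$--handlebody $\natural^{k_i}(S^1 \times B^3)$ branched along a trivial collection of $c_i$ disks $\Dd_i$ is again a $1$--handlebody (cut along the bridge disks of $\Dd_i$ to reduce to the $4$--ball case, where the branched cover of $B^4$ over $c_i$ trivial disks is $\natural^{c_i - 1}(S^1 \times B^3)$ — a standard Zeeman-type computation — and then track the $k_i$ one-handles), and the cover of a trivial disk system is trivial. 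An Euler-characteristic or handle count then gives $k_i' = nk_i + (n-1)(c_i - 1)$, and the new $c_i$ (number of disks in $\widetilde\Dd_i$) equals $c_i$ for the same strand-pairing reason as above.

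The main obstacle I anticipate is not any single computation but rather the verification that the branched-cover operation genuinely respects the gluings: one must confirm that the cyclic branched cover of $H_{ij}$ determined by its $\Z/n$ homomorphism restricts on $\Sigma$ to the cyclic branched cover determined by $\Sigma \cap \Kk$, and likewise that the cover of $X_i$ restricts correctly to the covers of $H_{ij}$ and $H_{ki}$ on its boundary $\pd X_i = H_{ij} \cup H_{ki}$. Since all the relevant $\Z/n$ homomorphisms come from total linking number with $\Kk$, this compatibility should follow from naturality of the covering-space construction applied to the commutative diagram of inclusions $\Sigma \hookrightarrow H_{ij} \hookrightarrow X_i$ and the fact that $H_1$ of each complement is generated by meridians of $\Kk$; but making this precise, and confirming that the branch locus downstairs is smoothly embedded so the cover is smooth, requires care. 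Once this is in hand, the pieces assemble into a $(g';k_1',k_2',k_3';b;c_1,c_2,c_3)$--generalized bridge trisection of $(X_n(\Kk),\widetilde\Kk)$, with $\widetilde\Kk$ the branch locus, completing the proof.
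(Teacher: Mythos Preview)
Your proposal is correct and follows essentially the same route as the paper: build the branched cover piece by piece, and compute the branched cover of a handlebody over a trivial tangle by cutting along bridge disks, taking $n$ copies, and regluing cyclically to see a handlebody of genus $ng+(n-1)(b-1)$ with the lifted arcs still trivial.

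The one meaningful difference is in how the $4$--dimensional sectors $(X_i,\Dd_i)$ are handled. You propose a separate analysis (cutting $X_i$ to reduce to the $4$--ball case and tracking handles), whereas the paper observes that a trivial disk system $(\natural^{k_i}(S^1\times B^3),\Dd_i)$ is literally a product $(\natural^{k_i}(S^1\times D^2),\tau)\times I$, and that the branched covering respects this product structure. This single observation collapses your separate $4$--dimensional computation \emph{and} most of your gluing-compatibility worry into the $3$--dimensional handlebody calculation you already did: once you know the cover of $(H,\tau)$ is a handlebody with trivial tangle, the cover of $(X_i,\Dd_i)$ is automatically that handlebody crossed with $I$, its boundary is automatically the right union of handlebody covers, and $k_i'$ falls out without a second handle count. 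Your approach works, but the product shortcut is what lets the paper's proof be a paragraph rather than a page.
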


\begin{proof}
	It suffices to show that the $n$--fold cover of a genus $g$ handlebody branched along collection of $b$ trivial arcs is a handlebody of genus $g' = ng + (n-1)(b-1)$, with the lift of the original $b$ trivial arcs being a collection of $b$ trivial arcs upstairs.  From this, the rest of the proposition follows, once we observe that the trivial disk system $(\natural^k(S^1\times B^3),\Dd)$ is simply the trivial tangle product $(\natural^k(S^1\times D^2),\tau)\times I$, and that the branched covering respects this product structure.  Thus, each piece of the trisection lifts to a standard piece, so the cover is trisected.
	
	To form the $n$--fold branched cover of $(H,\tau)$, we cut $H$ open along a collection of bridge disks $\{\Delta_i\}$ for $\tau = \{\tau_i\}$.  We then take $n$ copies of this filleted object, and glue them together cyclically by identifying  $\Delta_i^+$ of one copy with $\Delta_i^-$ of the next.  This is the same as attaching $nb$ 1--handles to the disjoint union of $n$ genus $g$ handlebodies until the object is connected.  The result is a genus $ng+(n-1)(b-1)$ handlebody.  Clearly, one of the lifts of $\Delta_i$ is a bridge disk for the lift of $\tau_i$, as desired.
\end{proof}

	

\subsection{Complex curves in $\CP^2$}\label{subsec:complex}\ 

In this subsection we summarize results that have been obtained by the authors in collaboration with Peter Lambert-Cole regarding generalized bridge trisections of complex curves in complex 4--manifolds of low trisection genus (e.g., $\CP^2$, $S^2\times S^2$, and $\CP^2\#\overline{\CP^2}$).  The following is a preliminary result that will be expanded in future joint work with Lambert-Cole. Let $\Cc_d$ denote the complex curve of degree $d$ in $\CP^2$.  Note that $\Cc_d$ is a closed surface of genus $(d-1)(d-2)/2$.

\begin{reptheorem}{thm:complex}
	The pair $(\CP^2,\Cc_d)$ admits a $(1,1;(d-1)(d-2)+1,1)$--generalized bridge trisection.
\end{reptheorem}

In other words, complex curves in $\CP^2$ admit efficient generalized bridge trisections with respect to the genus one trisection of $\CP^2$; each such curve can be decomposed as the union of three disks ($c=1$).  See Figure~\ref{fig:cubic}.  Let $X_{n,d}$ denote the 4--manifold obtained as the $n$--fold cover of $\CP^2$, branched along $\Cc_d$, which exists whenever $n$ divides $d$.  When $n=d$, we have that $X_{d,d}$ is the degree $d$ hyper-surface in $\CP^3$.  The next corollary follows from Theorem~\ref{thm:complex} and Proposition~\ref{prop:branch}.
\begin{figure}[h!]
	\centering
	\includegraphics[width=.8\textwidth]{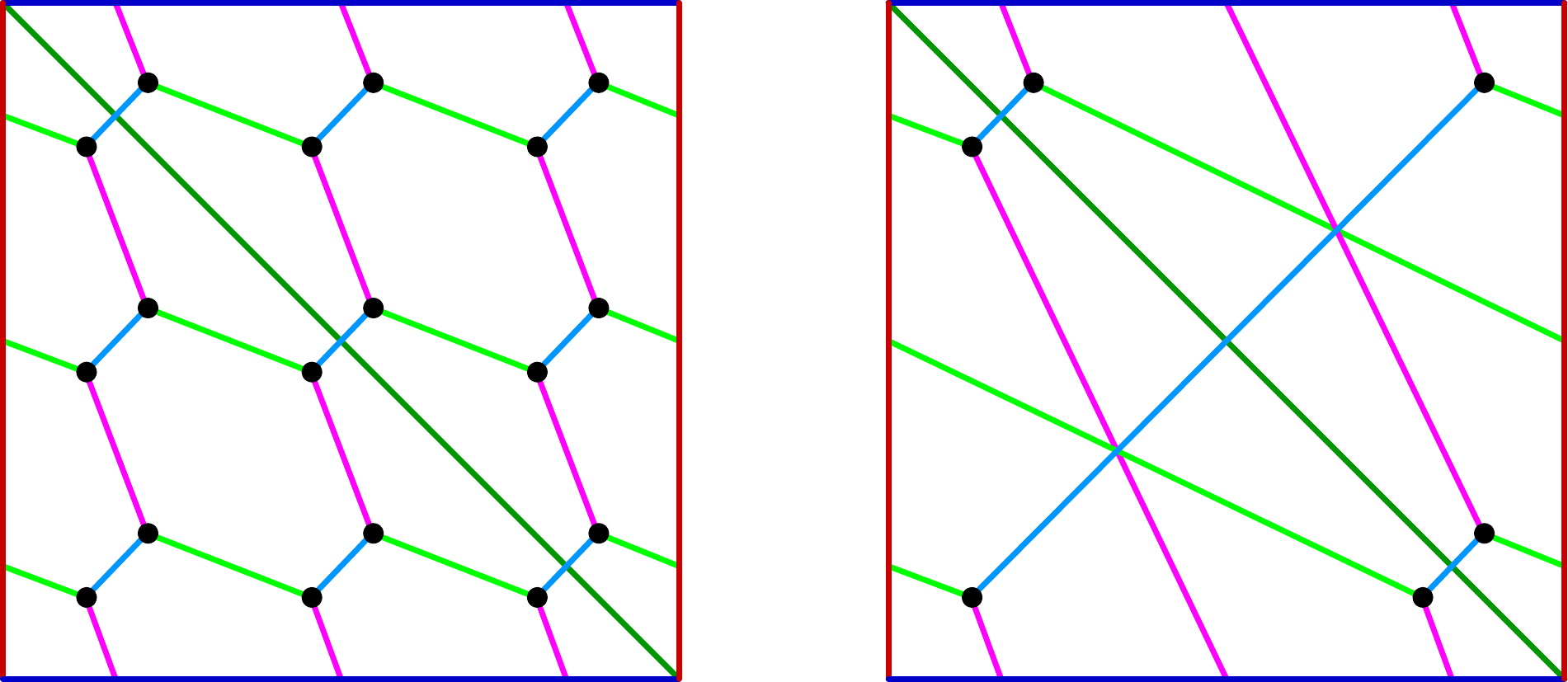}
	\caption{Two shadow diagrams for $\Cc_3$ in $\CP^2$. The diagram on the left is due to Peter Lambert-Cole, and the diagram on the right is efficient.}
	\label{fig:cubic}
\end{figure}

\begin{corollary}\label{coro:g0}
	$X_{n,d}$ admits an efficient $(g,0)$--trisection where $g=n+(n-1)(d-1)(d-2)$.
\end{corollary}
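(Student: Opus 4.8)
The plan is to deduce Corollary~\ref{coro:g0} by combining Theorem~\ref{thm:complex} with the branched-covering Proposition~\ref{prop:branch}, being careful to track how the genus-zero disk pieces control the genus of the resulting trisection. By Theorem~\ref{thm:complex}, the pair $(\CP^2,\Cc_d)$ admits an efficient generalized bridge trisection of genus one; concretely, it is a $(1,1;(d-1)(d-2)+1,1)$--generalized bridge trisection. In the notation of Proposition~\ref{prop:branch}, this means $g=1$, $k_1=k_2=k_3=0$, $b=(d-1)(d-2)+1$, and $c_1=c_2=c_3=1$. When $n$ divides $d$, the $n$--fold cover of $\CP^2$ branched along $\Cc_d$ exists, so Proposition~\ref{prop:branch} applies directly to this data.

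First I would simply substitute into the formulas of Proposition~\ref{prop:branch}. The new surface genus is $g' = ng + (n-1)(b-1) = n + (n-1)\big((d-1)(d-2)+1-1\big) = n + (n-1)(d-1)(d-2)$, which is exactly the value of $g$ claimed in the corollary. The new handlebody genera are $k_i' = nk_i + (n-1)(c_i-1) = n\cdot 0 + (n-1)(1-1) = 0$ for each $i$. The new bridge number and disk counts are unchanged: the cover inherits a $(g';0,0,0;b;1,1,1)$--generalized bridge trisection, i.e.\ a balanced $(g',0,b,1)$--generalized bridge trisection of $(X_{n,d},\widetilde{\Cc_d}_n)$. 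The point I would emphasize here is that $c_1=c_2=c_3=1$ is doing all the work: only because each disk count is $1$ does the $(n-1)(c_i-1)$ term vanish, so the ambient cover has a trisection with all three $1$--handlebody pieces of genus zero.

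Next I would observe that the existence of a $(g',0,b,1)$--generalized bridge trisection of the pair in particular gives a $(g',0)$--trisection of the 4--manifold $X_{n,d}$ by forgetting the surface (retaining only $X_{n,d} = X_1 \cup X_2 \cup X_3$), and that this trisection is efficient in the sense used in the paper: it has $k_1=k_2=k_3=0$, so it is a $(g,0)$--trisection, which is the minimal possible middle parameter. Finally I would remark that when $n=d$ the manifold $X_{d,d}$ is the degree $d$ hypersurface in $\CP^3$, recovering the headline application (for odd $d\geq 5$ an exotic connected sum of copies of $\CP^2$ and $\overline{\CP^2}$, by Donaldson), with $g = d + (d-1)(d-1)(d-2) = d(d-2)(d-1)^2/(d) \cdots$ — more precisely $g = d + (d-1)^2(d-2)$, matching the value $d(d-2)(d-1)^2+d$ quoted in the introduction once one uses $n=d$ in $g = n + (n-1)(d-1)(d-2)$, giving $g = d + (d-1)^2(d-2)$.

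Honestly, there is no real obstacle here: the corollary is a direct arithmetic consequence of the two quoted results, and the only thing requiring care is the bookkeeping of which parameters survive the branched cover and verifying that the hypothesis ``$n$ divides $d$'' is exactly what guarantees the branched cover $X_{n,d}$ exists in the first place. The one conceptual point worth stating explicitly — and the closest thing to a ``hard part'' — is the observation recorded above that the efficiency of the downstairs bridge trisection (in particular $c_i=1$) is precisely what forces $k_i'=0$ upstairs, so that the resulting trisection of $X_{n,d}$ is genuinely a $(g,0)$--trisection rather than merely a $(g,k)$--trisection for some positive $k$; without Theorem~\ref{thm:complex}'s strong conclusion one would only get an inefficient trisection of the cover.
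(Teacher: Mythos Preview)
Your argument is correct and is exactly the approach the paper intends: the paper simply states that the corollary ``follows from Theorem~\ref{thm:complex} and Proposition~\ref{prop:branch},'' and you have spelled out the arithmetic that makes this immediate, including the key observation that $c_i=1$ forces $k_i'=0$.

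One small remark: in your final paragraph you assert that $d+(d-1)^2(d-2)$ matches the value $d(d-2)(d-1)^2+d$ from the introduction, but these are not equal for $d>1$ (e.g.\ $d=5$ gives $53$ versus $245$). This is a discrepancy in the paper's introduction, not an error in your derivation of the corollary; your formula $g=n+(n-1)(d-1)(d-2)$ is the correct one and agrees with the statement being proved. You should simply drop the attempted cross-check rather than claim a match that does not hold.
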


Note that $Z_{p,q,r}= p\CP^2\# q\overline{\CP^2}\# r S^2\times S^2$ admits as $(g,0)$--trisection where $g = p+q+2r$.  It had been speculated that an extension of the main theorems of~\cite{Meier-Schirmer-Zupan_Classification_2016} and~\cite{Meier-Zupan_Genus-two_2017} would show that every manifold admitting a $(g,0)$--trisection is diffeomorphic to $Z_{p,q,r}$; however, Corollary~\ref{coro:g0} gives many interesting counterexamples to this suspicion.

For example, if $d$ is odd and at least five, then $X_{d,d}$ is homeomorphic, but not diffeomorphic, to $Z_{p,q,0}$ for certain $p,q\geq 0$~\cite{Don_1990_Polynomial-invariants}.  Thus, we see that there are pairs of exotic manifolds that are not distinguished by their trisection invariants.  We note that Baykur and Saeki have previously given examples of inefficient exotic trisections~\cite{Baykur-Saeki_Simplifying_2017}.  Justification of the diagrams in Figure~\ref{fig:cubic}, along with the complete proof of Theorem~\ref{thm:complex} will appear in a forthcoming article based on the aforementioned collaboration with Lambert-Cole.

\section{Proofs}\label{sec:proofs}

In the first part of this section, we will prove a sequence of lemmas which, taken together, imply Theorem~\ref{mainthm}.  In the second part, we prove Proposition~\ref{oneone}, classifying $(1,1)$--generalized bridge trisections.  In the third part, we introduce the notion of meridional stabilization and prove Theorem~\ref{mainthm2}.

\subsection{The existence of generalized bridge splittings}\label{subsec:exist}\ 

Here we discuss the interaction between handle decompositions and trisections of closed 4--manifolds.  We will not rigorously define handle decompositions but direct the interested reader to~\cite{Gompf-Stipsicz_4-manifolds_1999}.  

Suppose $\Hh$ is a handle decomposition of a 4--manifold $X$ with a single 0--handle and a single 4--handle.  Corresponding to $\Hh$, there is a Morse function $h: X \rightarrow \R$, equipped with a gradient-like vector field, that induces the handle decomposition $\Hh$.  We will suppose that each Morse function is equipped with a gradient-like vector field, which we will neglect to mention henceforth.  After an isotopy, we may assume that every critical point of index $i$ occurs in the level $h^{-1}(i)$.  Such a Morse function is called \emph{self-indexing}.  For any subset $S \subset \R$, let $Y_S$ denote $X \cap h^{-1}(S)$.  Let $Z$ be a compact submanifold of $Y_{\{t\}}$ for some $t$, and let $[r,s]$ be an interval containing $t$.  We will let $Z_{[r,s]}$ denote the subset of $X$ obtained by pushing $Z$ along the flow of $h$ during time $[s,r]$.  In particular, if this set does not contain a critical point of $h$, then $Z_{[r,s]}$ is diffeomorphic to $Z \X [r,s]$, and we let $Z_{\{t'\}}$ denote $Z_{[r,s]} \cap h^{-1}(t')$.

Now, let $\Hh$ be a handle decomposition of $X$ with $n_i$ $i$--handles for $i = 1,2,3$, and let $T$ be the attaching link for the 2--handles, so that $T$ is an $n_2$-component framed link contained in $\#^{n_1}(S^1 \X S^2)$ with Dehn surgery yielding $\#^{n_3} (S^1 \X S^2)$.  In addition, let $h: X \rightarrow \R$ be a self-indexing Morse function inducing $\Hh$.  We suppose without loss of generality that $T$ is contained in $Y_{\{3/2\}} = \#^{n_1} (S^1 \X S^2)$, and we let $\Sigma$ be a genus $g$ Heegaard surface cutting $Y_{\{3/2\}}$ into handlebodies $H$ and $H'$, where a core of the handlebody $H$ contains $T$.

\begin{lemma}\label{morsetri}
Let $X$ be a 4--manifold with self-indexing Morse function $h$, and, using the notation above, consider the sets
\[ X_1 = Y_{[0,3/2]} \cup H'_{[3/2,2]}, \qquad X_2 = H_{[3/2,5/2]}, \qquad X_3 = H'_{[2,5/2]} \cup Y_{[5/2,4]}.\]
The decomposition $X = X_1 \cup X_2 \cup X_3$ is a $(g;n_1,g-n_2,n_3)$--trisection with central surface $\Sigma_{\{2\}}$.
\end{lemma}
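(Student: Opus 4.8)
The plan is to verify directly that the three sets $X_1, X_2, X_3$ satisfy the three defining properties of a trisection: each $X_i$ is a $4$--dimensional $1$--handlebody (of the claimed genus), the pairwise intersections $H_{ij} = X_i \cap X_j$ are $3$--dimensional handlebodies, and the triple intersection is the closed surface $\Sigma_{\{2\}}$. The key tool throughout is that outside the critical levels the flow of $h$ gives product structures, so we can compute each piece by tracking how the Heegaard splitting $Y_{\{3/2\}} = H \cup_\Sigma H'$ is pushed up and down and what handles get attached along the way.

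First I would analyze $X_2 = H_{[3/2,5/2]}$. Since the region $h^{-1}([3/2,5/2])$ contains only the index--$2$ critical points (the $2$--handles, attached along the framed link $T \subset Y_{\{3/2\}}$), and by hypothesis $T$ lies on a core of the handlebody $H$, the trace of these $2$--handle attachments restricted to $H$ turns the product $H \times I$ into $H \times I$ with $n_2$ $4$--dimensional $2$--handles attached along $T$. Attaching a $2$--handle to a $1$--handlebody of genus $g$ along a knot that lies on a spine reduces the genus by one (it cancels, or rather caps, a $1$--handle), so $X_2$ is a $1$--handlebody of genus $g - n_2$. This is the step where I expect to need the most care: one must check that the framings are irrelevant to the diffeomorphism type of $X_2$ (they are, because $X_2$ is built abstractly from $H$ and the handle attachments, and a $2$--handle attached along a curve on the boundary that bounds a disk in a spine always produces a $1$--handlebody regardless of framing — the framing data is recorded in $H_{12}$ and $H_{23}$, not in $X_2$ itself) and that $T$ being on a core really does make each $2$--handle a ``subtraction'' of a $1$--handle.

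Next I would handle $X_1 = Y_{[0,3/2]} \cup H'_{[3/2,2]}$. The piece $Y_{[0,3/2]}$ is the sublevel set through the $1$--handles: one $0$--handle and $n_1$ $1$--handles, so it is a $1$--handlebody of genus $n_1$ with boundary $Y_{\{3/2\}} = \#^{n_1}(S^1 \times S^2)$. Gluing on the collar $H'_{[3/2,2]} \cong H' \times I$ does not change the diffeomorphism type, since $H'$ is a $3$--dimensional handlebody sitting in the boundary $Y_{\{3/2\}}$; attaching a collar on a codimension--$0$ submanifold of the boundary is a trivial (boundary) operation. Hence $X_1$ is a $1$--handlebody of genus $n_1$. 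The argument for $X_3 = H'_{[2,5/2]} \cup Y_{[5/2,4]}$ is the mirror image, using the $3$--handles and the $4$--handle and the fact that $Y_{[5/2,4]}$ is a $1$--handlebody of genus $n_3$ (turning the handle decomposition upside down, the $3$--handles become $1$--handles).

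Finally I would identify the pairwise and triple intersections. By construction all three pieces meet the level set $h^{-1}(2)$, and $X_1 \cap X_2 \cap X_3 = (\partial H \cap \partial H')_{\{2\}} = \Sigma_{\{2\}}$, a closed genus--$g$ surface. For the pairwise intersections: $X_1 \cap X_2 = H'_{[3/2,2]} \cap H_{[3/2,2]} = (\partial H')_{[3/2,2]} \cap (\partial H)_{[3/2,2]}$, which retracts to $H$ pushed up to level $2$, i.e.\ a genus--$g$ handlebody; similarly $X_2 \cap X_3$ is $H$ (or its image) pushed down and is a genus--$g$ handlebody, while $X_1 \cap X_3 = H'_{[3/2,5/2]}$ collapses to $H'$, again a genus--$g$ handlebody. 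Each $H_{ij}$ is visibly a $3$--dimensional handlebody with boundary $\Sigma_{\{2\}}$, and $X_1 \cup X_2 \cup X_3 = X$ since the level sets $h^{-1}(t)$ for $t \in [3/2,5/2]$ are covered by $H_t \cup H'_t$ and the sublevel/superlevel sets $Y_{[0,3/2]}$ and $Y_{[5/2,4]}$ account for the rest. This gives a $(g; n_1, g - n_2, n_3)$--trisection, as claimed. The only genuinely substantive point, as noted above, is the genus count for $X_2$, which rests on the hypothesis that the $2$--handle attaching link $T$ lies on a core of $H$.
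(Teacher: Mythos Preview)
Your approach is essentially the same as the paper's: identify each $X_i$ as a 1--handlebody of the right genus by tracking handles, then compute the pairwise and triple intersections using the product structure away from critical levels. The arguments for $X_1$, $X_2$, $X_3$ and for the triple intersection are fine and match the paper's.

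Your pairwise intersection computations, however, are garbled. You write $X_1 \cap X_2 = H'_{[3/2,2]} \cap H_{[3/2,2]}$, but this ignores that $X_1$ also contains $Y_{[0,3/2]}$, whose boundary $Y_{\{3/2\}}$ meets $X_2$ in $H_{\{3/2\}}$. The set you wrote down is only $\Sigma_{[3/2,2]}$, a product surface, which does \emph{not} retract to a handlebody $H$. The correct description (as in the paper) is $X_1 \cap X_2 = H_{\{3/2\}} \cup \Sigma_{[3/2,2]}$, i.e.\ a copy of $H$ with a collar attached along its boundary; similarly $X_2 \cap X_3 = H_{\{5/2\}} \cup \Sigma_{[2,5/2]}$. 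And $X_1 \cap X_3$ cannot be $H'_{[3/2,5/2]}$: $X_1$ lives in $h^{-1}([0,2])$ and $X_3$ in $h^{-1}([2,4])$, so they meet only at level $2$, giving $X_1 \cap X_3 = H'_{\{2\}}$. Once these formulas are corrected, each $H_{ij}$ is visibly a genus $g$ handlebody and the proof goes through as you intended.
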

\begin{proof}
This is proved in slightly different terms in~\cite{Gay-Kirby_Trisecting_2016}; we will include a brief proof here.  As mentioned above, a self-indexing Morse function gives rise to a handle decomposition $\Hh$ of $X$.  Clearly, $Y_{[0,3/2]}$ is a handlebody, as it contains the 0--handle and the $n_1$ 1--handles of $\Hh$, and $H'_{[3/2,2]}$ contains no critical points and thus is diffeomorphic to $H' \X [3/2,2]$.  Thus, the union $X_1$ is a genus $n_1$ 1--handlebody.  Similarly, $X_3$ contains the 3--handles and the 4--handle and is a genus $n_3$ 1--handlebody.  Finally, note that $X_2$ may be constructed by attaching 2--handles to $H \X [3/2,2-\varepsilon]$ along $T$, where $H \X [3/2,2-\varepsilon]$ is built from a 0--handle and $g$ 1--handles, and each 2--handle attached along $T$ cancels a 1--handle, so that $X_2$ has a handle decomposition with a 0--handle and $g-n_2$ 1--handles, making it a genus $g-n_2$ 1--handlebody.

Next, we describe the pairwise intersections of $X_i$'s.  We have $X_1 \cap X_2 = H_{\{3/2\}} \cup \Sigma_{[3/2,2]}$, $X_2 \cap X_3 = H_{{\{5/2\}}} \cup \Sigma_{[2,5/2]}$, and $X_1 \cap X_3 = H'_{\{2\}}$, where each intersection is a genus $g$ handlebody.  Finally, note that $X_1 \cap X_2 \cap X_3 = \Sigma_{\{2\}}$, as desired.
\end{proof}
Given a self-indexing Morse function $h$ and surface $\Sigma$ as above, we will let $\Tt(h,\Sigma)$ denote the trisection described by Lemma~\ref{morsetri}.

\begin{lemma}\label{trimorse}
Given a trisection $\Tt$ of $X$, there is a self-indexing Morse function $h$ and surface $\Sigma \subset Y_{\{3/2\}}$ such that $\Tt = \Tt(h,\Sigma)$.
\end{lemma}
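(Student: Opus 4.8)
The plan is to reverse-engineer the construction of Lemma~\ref{morsetri}: given the trisection $\Tt = X_1 \cup X_2 \cup X_3$, I will build by hand a self-indexing Morse function whose associated handle decomposition recovers the three sectors, and then identify the surface $\Sigma$ inside the appropriate level set. The starting data is the spine $H_{12} \cup H_{23} \cup H_{31}$ together with the central surface $\Sigma = X_1 \cap X_2 \cap X_3$, which has some genus $g$; set $k_i$ to be the genus of the $1$--handlebody $X_i$, so that $\Tt$ is a $(g;k_1,k_2,k_3)$--trisection.

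First I would handle the easy sectors. Since $X_1$ is a $1$--handlebody of genus $k_1$, it carries a Morse function $h_1 : X_1 \to [0,3/2]$ with one index-$0$ critical point and $k_1$ index-$1$ critical points, and with $h_1^{-1}(3/2) = \pd X_1 = \#^{k_1}(S^1 \times S^2)$. I want to arrange this so that $H_{13} = X_1 \cap X_3$, a genus $g$ handlebody in $\pd X_1$, is a Heegaard handlebody; call the complementary handlebody $H' = \overline{\pd X_1 \setminus H_{13}}$, which is exactly $(H_{12} \cup_\Sigma H_{23})$ viewed as sitting in $\pd X_1$. Dually, $X_3$ is a $1$--handlebody of genus $k_3$, so it carries a Morse function on $[5/2,4]$ with $k_3$ index-$3$ critical points and one index-$4$ critical point, with $h^{-1}(5/2) = \pd X_3 = \#^{k_3}(S^1 \times S^2)$; I arrange that $H_{13}$ is again a Heegaard handlebody for this identification. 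The middle region $X_2$ is a $1$--handlebody of genus $k_2$, but in the trisection it is glued along $H_{12} \sqcup H_{23}$ to the rest; the key point (this is the content of the Gay--Kirby picture run backwards) is that $X_2$, relative to its attaching region $H_{12}$, can be built from $H_{12} \times I$ by attaching $g - k_2$ two--handles along a framed link $T$ in a core of $H$, where $H$ is a genus $g$ handlebody with $\pd H = \Sigma$. Equivalently, $X_2$ is obtained by attaching $2$--handles to a collar of one of the handlebodies bounded by $\Sigma$ inside $Y_{\{3/2\}}$.

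The heart of the argument, and the step I expect to be the main obstacle, is the gluing/interpolation: I need to produce a single global self-indexing Morse function $h$ on $X$ such that the handle decomposition of the cobordism $Y_{[3/2,5/2]}$ consists precisely of $n_2 = g - k_2$ two--handles attached along $T \subset Y_{\{3/2\}} = \#^{k_1}(S^1 \times S^2)$, with $Y_{\{3/2\}}$ cut by $\Sigma$ into $H$ (containing $T$ in its core) and $H'$, in such a way that $\Tt(h,\Sigma)$ literally reproduces $X_1, X_2, X_3$. This amounts to checking three compatibility facts: (1) the framed link $T$ whose surgery description realizes the passage from $\pd X_1$ to $\pd X_3$ can be isotoped off $H'$ into the core of $H$ — this uses that both $H_{12} \subset \pd X_1$ and $H_{23} \subset \pd X_3$ are Heegaard handlebodies compatible with $\Sigma$, which is exactly the defining property of a trisection spine; (2) the surgery on $T$ turns $\#^{k_1}(S^1\times S^2)$ into $\#^{k_3}(S^1 \times S^2)$, which follows because $X_2$ is a genuine $1$--handlebody and $2$--handle/$1$--handle cancellation counts the genera correctly via Lemma~\ref{morsetri}; and (3) the self-indexing condition can be arranged by the usual rearrangement lemma for Morse functions (Milnor), sliding index-$i$ critical points into level $i$. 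The one genuinely delicate point is that a trisection is only specified up to isotopy of the gluing maps, so I must ensure that the diffeomorphism identifying the abstractly-assembled $\Tt(h,\Sigma)$ with the given $\Tt$ can be taken to be isotopic to the identity — here I would invoke that trisections are determined by their spines (the corollary following Lemma~\ref{unq}'s analogue for trisections, i.e.\ the Laudenbach--Poénaru uniqueness of the $4$--handle filling), so once the spines match the trisections match.

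In summary, the steps in order are: (i) read off $(g;k_1,k_2,k_3)$ and the spine from $\Tt$; (ii) build Morse functions on the outer sectors $X_1$ and $X_3$ with the prescribed index data and with $H_{13}$ as the common Heegaard handlebody, landing $\Sigma$ in the level $\{2\}$; (iii) recognize $X_2$ as $2$--handles attached along a framed link $T$ that lies in the core of the handlebody $H$ bounded by $\Sigma \subset Y_{\{3/2\}}$; (iv) splice the three Morse functions into one $h$ on $X$ and apply the rearrangement lemma to make it self-indexing; (v) verify $\Tt(h,\Sigma) = \Tt$ by matching spines and quoting uniqueness. I expect (iii)--(iv), the interpolation making $X_2$ appear as a standard $2$--handle cobordism compatibly with the Heegaard surface $\Sigma$, to be where the real work lies, since it is precisely the reverse of the somewhat subtle construction in Lemma~\ref{morsetri}.
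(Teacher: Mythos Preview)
Your proposal is essentially correct and follows the same route as the paper, which simply defers to the proof of Lemma~13 in Gay--Kirby~\cite{Gay-Kirby_Trisecting_2016}; you are spelling out that argument in detail. One notational slip worth fixing: the complementary handlebody $\overline{\pd X_1 \setminus H_{13}}$ is $H_{12}$, not $H_{12} \cup_\Sigma H_{23}$ (the piece $H_{23} = X_2 \cap X_3$ does not lie in $\pd X_1$ at all), and in the conventions of Lemma~\ref{morsetri} it is $H_{31}$ that corresponds to $H'$ while $H_{12}$ corresponds to $H$ --- but this does not affect the structure of your argument, since you correctly identify $H_{12}$ as the attaching region for $X_2$ in step~(iii).
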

\begin{proof}
This is essentially contained in the proof of Lemma 13 of~\cite{Gay-Kirby_Trisecting_2016}.
\end{proof}

Now we turn our focus to knotted surfaces in 4--manifolds.  Suppose that $\Kk$ is a knotted surface in $X$.  A \emph{Morse function of the pair} $h:(X,\Kk) \rightarrow \R$ is a Morse function $h: X \rightarrow \R$ with the property that the restriction $h_{\Kk}$ is also Morse.  Note that for any $\Kk \subset X$, a Morse function $h:X \rightarrow \R$ becomes a Morse function of the pair $(X,\Kk)$ after a slight perturbation of $\Kk$ in $X$.  Expanding upon the previous notation, for $S \subset \R$, we let $L_S$ denote $\Kk \cap h^{-1}(S)$.  Let $J$ be a compact submanifold of $L_{\{t\}}$ for some $t$, and let $[r,s]$ be an interval containing $t$.  We will let $J_{[r,s]}$ denote the subset of $\Kk$ obtained by pushing $J$ along the flow of $h_{\Kk}$ during time $[s,r]$.  As above, if this set does not contain a critical point of $h_{\Kk}$, then $J_{[r,s]}$ is diffeomorphic to $J \X [r,s]$.

Saddle points of $h_{\Kk}$ can be described as cobordisms between links obtained by resolving bands:  Given a link $L$ in a 3--manifold $Y$, a \emph{band} is an embedded rectangle $R = I \X I$ such that $R \cap L = \pd I \X I$.  We \emph{resolve the band $R$} to get a new link by removing the arcs $\pd I \X I$ from $L$ and replacing them with the arcs $I \X \pd I$.  Note that every band $R$ can be represented by a framed arc $\eta = I \X \{1/2\}$, so $\eta$ meets $L$ only in its endpoints.  Let $h$ be a Morse function of the pair $(X,\Kk)$, suppose that all critical points of $h$ and $h_{\Kk}$ occur at distinct levels, and let $x \in \Kk$ be a saddle point contained in the level $h^{-1}(t)$.  Then there is a framed arc $\eta$ with endpoints in the link $L_{\{t-\eps\}}$ with the property that the link $L_{\{t+\eps\}}$ is obtained from $L_{\{t-\eps\}}$ by resolving the band corresponding to $\eta$.  We will use this fact in the proof of the next lemma, which is related to the notion of a normal form for a 2--knot in $S^4$~\cite{Kawauchi-Shibuya-Suzuki_Descriptions_1982,Kawauchi_A-survey_1996}.

\begin{lemma}\label{doublemorse}
Suppose $X$ is a 4--manifold equipped with a handle decomposition $\Hh$, and $\Kk$ is a surface embedded in $X$.  After an isotopy of $\Kk$, there exists a Morse function of the pair $(X,\Kk)$ such that
\begin{enumerate}
\item $h$ is a self-indexing Morse function inducing the handle decomposition $\Hh$,
\item minima of $h_{\Kk}$ occur in the level $Y_{\{1\}}$,
\item saddles of $h_{\Kk}$ occur in the level $Y_{\{2\}}$, and
\item maxima of $h_{\Kk}$ occur in the level $Y_{\{3\}}$.
\end{enumerate}
\end{lemma}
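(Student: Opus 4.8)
The plan is to imitate and extend the Kawauchi--Shibuya--Suzuki normal form for $2$--knots in $S^4$. First I would fix a self-indexing Morse function $h \colon X \to \R$ inducing $\Hh$, together with a gradient-like vector field, and, after a $C^\infty$--small perturbation of $\Kk$ that does not change its isotopy class, assume that $h_{\Kk} = h|_{\Kk}$ is Morse with all of its critical points at distinct levels, none of which is a critical level of $h$. Conclusion (1) then holds automatically, since $h$ is never altered, and everything reduces to an isotopy of $\Kk$ that drives the minima of $h_{\Kk}$ to level $1$, the saddles to level $2$, and the maxima to level $3$.

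The underlying Morse-theoretic input is clean: on the closed surface $\Kk$, a minimum of $h_{\Kk}$ has an empty descending sphere and a maximum has an empty ascending sphere, so the Morse rearrangement theorem lets one slide every minimum below every non-minimum and every maximum above every non-maximum; after a further small adjustment co-levelling the saddles, $h_{\Kk}$ is brought to a form in which all minima lie at one level, all saddles at a higher level, and all maxima at a still higher one, and a final isotopy places these three levels at $1$, $2$, and $3$. The substance of the proof is to realize all of this by an \emph{ambient} isotopy of $\Kk$ in $X$. To move a critical point of $h_{\Kk}$ from one level to another one drags a small disk neighborhood of it in $\Kk$ along the flow of a gradient-like vector field for $h$; when this disk must cross a critical level of $h$, one keeps it small and disjoint from the at-most-one-dimensional belt sphere being surgered and from the low-dimensional ascending or descending manifolds of the critical points of $h$ that must be avoided, routing the disk through the product part of the cobordism. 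And $\Kk$ stays embedded throughout because, once the minima have been brought lowest and the maxima highest, nothing of $\Kk$ lies below the minima or above the maxima to collide with, while the standard local model at an interior critical point of $h_{\Kk}$ leaves room on the appropriate side for the drag.

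I expect the main obstacle to be exactly this interaction between the rearrangement of $h_{\Kk}$ and the handle structure of $X$, which is invisible in the $S^4$ case of \cite{Kawauchi-Shibuya-Suzuki_Descriptions_1982, Kawauchi_A-survey_1996} because there $h$ has no critical points of index $1$, $2$, or $3$. A convenient preliminary reduction is to isotope $\Kk$ off the cores of the $1$--handles and the cocores of the $3$--handles of $\Hh$ --- possible since a $2$--manifold generically misses a $1$--manifold in $X^4$ --- after which all of the level-changing isotopies can be arranged to take place in the complement of neighborhoods of those cores and cocores, so that no new critical points of $h_{\Kk}$ appear near the $1$-- and $3$--handles. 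Carrying out the whole rearrangement while tracking embeddedness of $\Kk$ and the positions of the critical points of $h$ simultaneously is the Cerf-theoretic heart of the argument.
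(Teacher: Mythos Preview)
Your proposal is correct and follows essentially the same strategy as the paper: first isotope $\Kk$ off the cores of the $1$--handles and cocores of the $3$--handles so that $\Kk \subset Y_{(1+\eps,3-\eps)}$, then drag minima down along descending arcs to level~$1$ and maxima up to level~$3$, and finally bring all saddles to level~$2$.

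The one place where the paper is more concrete than your outline is the saddle step. You phrase it as an abstract Morse rearrangement, dragging small disk neighborhoods of saddle points through the $4$--manifold while avoiding the low-dimensional ascending/descending manifolds of the index--$2$ critical points of $h$. The paper instead works entirely in a $3$--dimensional level set: each saddle is encoded by a framed arc $\eta_i$ attached to the link $L_{\{t_i-\eps\}}$, the arcs are made pairwise disjoint by small isotopies (sliding $\eta_2$ off the band of $\eta_1$, etc.), and then the resulting $1$--complex $L \cup \{\eta_i\}$ is observed to generically miss the $1$--dimensional attaching link $T$ of the $2$--handles inside the $3$--manifold $Y_{\{3/2\}}$, so the entire apparatus can be pushed to level~$2$. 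This ``bands in a level set'' viewpoint replaces your $4$--dimensional general-position argument with a cleaner $1$--complex--versus--$1$--complex count in dimension three, and it also makes transparent why saddles can be co-levelled even when their intrinsic ascending/descending arcs in $\Kk$ might intersect. Your approach works too, but the paper's band description is what feeds directly into the next lemma (putting the banded link in bridge position), so it is worth internalizing.
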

\begin{proof}
Let $\Gamma_1$ be an embedded wedge of circles containing the cores of the 1--handles, so that $\nu(\Gamma_1)$ is the union of the 0--handle and the 1--handles of $\Hh$.  Similarly, let $\Gamma_3$ be an embedded wedge of circles such that $\nu(\Gamma_3)$ is the union of the 3--handles and 4--handle.  After isotopy $\Kk$ meets $\Gamma_1$ and $\Gamma_3$ transversely; hence $\Kk \cap \Gamma_1 = \Kk \cap \Gamma_3 = \emp$, and thus we can initially choose a self-indexing Morse function $h:X \rightarrow \R$ so that $\nu(\Gamma_1) = Y_{[0,1+\eps)}$, $\nu(\Gamma_3) = Y_{(3-\eps,4]}$, and $\Kk \subset Y_{(1+\eps,3-\eps)}$.  For each minimum point of $h_{\Kk}$, choose a descending arc avoiding $\Kk$ and the critical points of $h$ and drag the minimum downwards within a neighborhood of this arc until it is contained in $Y_{\{1\}}$.  Similarly, there is an isotopy of $\Kk$ after which all maxima are contained in $Y_{\{3\}}$.

It only remains to show that after isotopy, all saddles of $h_{\Kk}$ are contained in $Y_{\{2\}}$.  Let $T$ be the attaching link for the 2--handles of $\Hh$, considered as a link in $Y_{\{2\}}$.  For each saddle point $x_i$ in level $t_i < 2$, let $\eta_i$ be the framed arc with endpoints in $L_{\{t_i\}}$, where $1 \leq i \leq n$, so that $L_{t_i + \eps}$ is obtained from $L_{t_i - \eps}$ by resolving the band induced by $\eta_i$.  Certainly $\eta_1$ is disjoint from $L_{t_1-\eps}$ except at its endpoints.  A priori, $\eta_2$ may intersect the band induced by $\eta_1$, but after a small isotopy, we may assume that $\eta_2$ avoids $\eta_1$ and thus we can push $\eta_2$ into $Y_{\{t_1\}}$.  Continuing this process, we may push all arcs $\eta_i$ into $Y_{\{t_1\}}$, and generically, the graph $L_{t_1 - \eps} \cup \{\eta_i\}$ is disjoint from $T$, so the entire apparatus can be pushed into $Y_{\{2\}}$.  A parallel argument show that the framed arcs coming from saddles occurring between $t = 2$ and $t=3$ can be pushed down into $Y_{\{2\}}$, as desired.
\end{proof}

We call a Morse function $h:(X,\Kk) \rightarrow \R$ that satisfies the conditions in Lemma~\ref{doublemorse} a \emph{self-indexing Morse function of the pair $(X,\Kk)$}.  Given such a function, we can push the framed arcs $\{\eta_i\}$ corresponding to the saddles of $h_{\Kk}$ into the level $Y_{\{3/2\}}$, where the endpoints of $\{\eta_i\}$ are contained in $L_{\{3/2\}}$ and resolving $L_{\{3/2\}}$ along the bands given by $\{\eta_i\}$ yields the link $L_{\{5/2\}}$.  A \emph{banded link diagram} for $\Kk$ consists of the union of $L_{\{3/2\}}$ with the bands given by $\{\eta_i\}$, contained in $Y_{\{3/2\}}$, along with the framed attaching link for the 2--handles in $X$, denoted by $T \subset Y_{\{3/2\}}$.  As such, a banded link diagram completely determines the knotted surface $\Kk \subset X$.  Let $\Hh$ be the handle decomposition of $X$ determined by $h$.  As above, let $\Sigma$ be a Heegaard surface cutting $Y_{\{3/2\}}$ into handlebodies $H$ and $H'$, where a core of $H$ contains $T$.

Let $\Gamma = L_{\{3/2\}} \cup \{\eta_i\}$ in $Y_{\{3/2\}}$.  We will show that $\Gamma$ may be isotoped to be in a relatively nice position with respect to the surface $\Sigma$, from which it will follow that there is an isotopy of $\Kk$ to be in a relatively nice position with respect to the trisection $\Tt(h,\Sigma)$.  An arc $\eta \subset \pd H$ is \emph{dual} to a trivial arc $\tau_i \subset H$ if there is a shadow $\tau_i'$ for $\tau_i$ that meets $\eta$ in one endpoint. Finally, a collection of pairwise disjoint arcs $\{\eta_i\} \subset \pd H$ is said to be \emph{dual} to a trivial tangle $\{\tau_i\}$ if there is a collection of shadows $\{\tau'_i\}$ that meet $\{\eta_i\}$ only in their endpoints and such that each component of $\{\eta_i\} \cup \{\tau'_i\}$ is simply connected (in other words, this collection contains only arcs, not loops).

We say that $\Gamma$ is in \emph{bridge position} with respect to $\Sigma$ if
\begin{enumerate}
\item each of $L_{\{3/2\}} \cap H$ and $L_{\{3/2\}} \cap H'$ is a trivial tangle,
\item $\{\eta_i\} \subset \Sigma$ with framing given by the surface framing, and
\item the arcs $\{\eta_i\}$ are dual to the trivial arcs $L_{\{3/2\}} \cap H$.
\end{enumerate}

To clarify condition (2), the arc $\eta_i \subset \Sigma$ has framing given by the surface framing exactly when the band induced by $\eta_i$ meets $\Sigma$ in the single arc $\eta_i$.  Next, we show that such structures exist, after which we describe how they induce generalized bridge trisections of $(X,\Kk)$.

\begin{lemma}\label{bridgeposition}
Given a knotted surface $\Kk \subset X$ and a self-indexing Morse function $h$ of the pair $(X,\Kk)$, let $\Sigma$, $H$, $H'$, $T$, and $\Gamma$ be as defined above.  There exists an isotopy of $\Gamma$ in $Y_{\{3/2\}}$ after which $\Gamma$ is in bridge position with respect to $\Sigma$.
\end{lemma}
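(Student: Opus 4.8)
The plan is to put $\Gamma = L_{\{3/2\}} \cup \{\eta_i\}$ in bridge position with respect to $\Sigma$ in two stages, handling the link $L_{\{3/2\}}$ first and then the bands $\{\eta_i\}$. Recall that $L_{\{3/2\}}$ is a link in $Y_{\{3/2\}} = \#^{n_1}(S^1 \X S^2)$, where $\Sigma$ is the Heegaard surface splitting this into handlebodies $H$ and $H'$ with a core of $H$ containing the attaching link $T$ for the $2$--handles. First I would invoke the classical existence of bridge splittings (cited in Section~\ref{sec:prelim}): after an isotopy of $L_{\{3/2\}}$ (which can be realized by an ambient isotopy supported away from $T$, since $T$ lies in the core of $H$ and a generic isotopy of the link misses it), we may assume $L_{\{3/2\}} \cap H$ and $L_{\{3/2\}} \cap H'$ are both trivial tangles, giving condition (1). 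Care is needed here to keep $T$ disjoint from the (finitely many) bridge disks, but since $T$ sits in a core of $H$ and the bridge disks for $L_{\{3/2\}} \cap H$ can be taken near $\pd H = \Sigma$, a small isotopy pushing the bridge disks off the core arranges this.

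Next I would arrange the bands. After stage one, push each framed arc $\eta_i$ off of the bridge disks of the tangle $L_{\{3/2\}} \cap H$ and isotope its interior into $\Sigma$; generically the $\eta_i$ are pairwise disjoint and their interiors miss $L_{\{3/2\}}$, and once an arc lies on the surface we may take its framing to be the surface framing at the cost of possibly introducing full twists, which can then be absorbed or removed. The essential point — and the place condition (3) becomes a real constraint — is duality: we need a system of shadows $\{\tau_i'\}$ for the trivial arcs of $L_{\{3/2\}} \cap H$ so that each $\eta_i$ meets the shadows only in endpoints and every component of $\{\eta_i\} \cup \{\tau_i'\}$ is simply connected, i.e., a tree. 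I would handle this by a count/stabilization argument: each band $\eta_i$ attaches to at most two trivial arcs of the tangle, and by sliding, perturbing (adding trivial arcs to the tangle), or performing elementary moves on the bridge splitting when a band connects the same arc to itself or creates a loop, one can always reroute to make the union a forest. This is the standard "normal form" manipulation for banded links, analogous to the arguments in~\cite{Kawauchi-Shibuya-Suzuki_Descriptions_1982}.

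The main obstacle I expect is precisely condition (3): a naive position of the bands can produce a closed loop in $\{\eta_i\} \cup \{\tau_i'\}$ (for instance when a band runs from a trivial arc back to itself, or when two bands together with their shadow arcs form a cycle), which violates simple-connectivity. Ruling this out requires showing that such loops can always be removed by isotopy of the $\eta_i$ in $\Sigma$ together with, if necessary, a controlled increase in the bridge number of $L_{\{3/2\}}$ (perturbing the tangle so that the offending band is split into pieces living in different bridge disks) — and that these perturbations do not disturb conditions (1) and (2) already achieved. I would therefore structure the argument so that stage two operates on the fixed bridge splitting from stage one but is allowed to perform meridional-type perturbations of $L_{\{3/2\}}$, arguing by induction on the number of bad loops that each can be eliminated while the total count of trivial arcs and bands stays finite; once no bad loops remain, $\Gamma$ is in bridge position and the lemma follows.
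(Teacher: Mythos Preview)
Your plan reverses the order of operations relative to the paper and, more importantly, misidentifies where the real difficulty lies. The paper's proof does not begin by putting $L_{\{3/2\}}$ in bridge position. Instead it chooses cores $C\subset H$ and $C'\subset H'$ (with $T\subset C$) disjoint from $\Gamma$, so that $Y_{\{3/2\}}\setminus(C\cup C')\cong\Sigma\times(-1,1)$, and uses the resulting projection of $\Gamma$ onto $\Sigma$ as a diagram. Crossings among the projected $\eta_i$ are then slid off onto $L_{\{3/2\}}$ by shrinking the bands and stretching the link, so that the band arcs become embedded and pairwise disjoint in $\Sigma$ with the surface framing; only \emph{after} this is $L_{\{3/2\}}$ pushed off $\Sigma$ (from its diagram, overstrands to one side and understrands to the other) into bridge position.

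Your approach fixes bridge position first and then proposes to ``isotope the interior of each $\eta_i$ into $\Sigma$,'' asserting that ``generically the $\eta_i$ are pairwise disjoint.'' That genericity holds for arcs in the 3--manifold but fails once they are confined to the surface: two arcs in $\Sigma$ generically meet in points, and a single $\eta_i$ may project with self-crossings. You give no mechanism for removing these while preserving the already-achieved bridge position of $L_{\{3/2\}}$, and this is precisely what the shrink-the-bands trick handles. Condition~(2), not~(3), is where the work is.

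Conversely, the duality condition~(3) that you flag as the ``main obstacle'' is the easy step. Once the bands lie disjointly on $\Sigma$, one simply perturbs $L_{\{3/2\}}$ near each endpoint of each $\eta_i$ (the paper's reference to Figure~12 of~\cite{Meier-Zupan_Bridge_2017}): this creates a fresh short bridge arc at every band endpoint, so each new shadow meets exactly one $\eta_i$ in exactly one point, and the union $\{\eta_i\}\cup\{\tau_i'\}$ is automatically a forest. No induction on bad loops is needed.
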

\begin{proof}
This decomposition is similar to the notion of a \emph{banded bridge splitting} from~\cite{Meier-Zupan_Bridge_2017}, where the detailed arguments in Theorem 1.3 do not make use of the fact that $\Sigma$ is sphere and thus transfer directly to this setting.  We give a brief outline of the proof but refer the reader to~\cite{Meier-Zupan_Bridge_2017} for further details.

Consider cores $C \subset H$ and $C' \subset H'$, which may be chosen so that $T \subset C$ and both $C$ and $C'$ are disjoint from $\Gamma$.  Note that $Y_{\{3/2\}} \setminus (C \cup C')$ is diffeomorphic to $\Sigma \X (-1,1)$ and thus there is a natural projection from $Y_{\{3/2\}} \setminus (C \cup C')$ onto $\Sigma = \Sigma \X \{0\}$.  By equipping this projection with crossing information, we may view it as an isotopy of $\Gamma$ within $Y_{\{3/2\}} \setminus (C \cup C')$.  First, if the arcs $\{\eta_i\}$ project to arcs that cross themselves or each other, we may stretch $L_{\{3/2\}}$ and shrink $\{\eta_i\}$ so these crossings are slid to $L_{\{3/2\}}$, after which the projection of the collection $\{\eta_i\}$ is embedded in $\Sigma$. (See Figure~10 of~\cite{Meier-Zupan_Bridge_2017}.)  It may be possible that some surface framing of some arc $\eta_i$ disagrees with its given framing; in this case, an isotopy of $L_{\{3/2\}}$ allows $\eta_i$ to be pushed off of and back onto $\Sigma$ with the desired framing, as in Figure~11 of~\cite{Meier-Zupan_Bridge_2017}.  Thus, we may assume condition (2) of the definition of bridge position of $\Gamma$ is satisfied.

Now, we push the projection $L_{\{3/2\}}$ off of $\Sigma$ so that $L_{\{3/2\}}$ is in bridge position, fulfilling condition (1) of the definition of bridge position.  At this point, it may not be the case that the arcs $\{\eta_i\}$ are dual to $L_{\{3/2\}} \cap H$; however, this requirement may be achieved by perturbing $L_{\{3/2\}}$ near the endpoints of the arcs $\{\eta_i\}$ in $\Sigma$, as in Figure~12 of~\cite{Meier-Zupan_Bridge_2017}.
\end{proof} 

\begin{lemma}\label{bridgetri}
Suppose that $\Kk$ is a knotted surface in $X$, with self-indexing Morse function $h$ of the pair $(X,\Kk)$ and $\Sigma$, $H$, $H'$, $T$, and $\Gamma$ as defined above.  Suppose further that $\Gamma$ is in bridge position with respect to $\Sigma$, push the arcs $\{\eta_i\}$ slightly into the interior of $H$.  Let $X_1$, $X_2$, and $X_3$ be defined as in Lemma~\ref{morsetri}, and define $\Dd_i = \Kk \cap X_i$.  Then
\[ (X,\Kk) = (X_1,\Dd_1) \cup (X_2, \Dd_2) \cup (X_3,\Dd_3)\]
is a generalized bridge trisection of $(X,\Kk)$.
\end{lemma}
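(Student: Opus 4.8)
The plan is to verify the two defining conditions of a generalized bridge trisection for the decomposition $X = X_1\cup X_2\cup X_3$ supplied by Lemma~\ref{morsetri}: that each $\Dd_i$ is a collection of trivial disks in $X_i$, and that for $i\neq j$ the arcs $\tau_{ij}=\Dd_i\cap\Dd_j$ form a trivial tangle in $H_{ij}$. I would work with the explicit pieces from the proof of Lemma~\ref{morsetri}, namely $X_1 = Y_{[0,3/2]}\cup H'_{[3/2,2]}$, $X_2=H_{[3/2,5/2]}$, $X_3 = H'_{[2,5/2]}\cup Y_{[5/2,4]}$, $H_{12}=H_{\{3/2\}}\cup\Sigma_{[3/2,2]}$, $H_{23}=H_{\{5/2\}}\cup\Sigma_{[2,5/2]}$, and $H_{31}=H'_{\{2\}}$, each pairwise intersection being a handlebody with boundary the central surface $\Sigma_{\{2\}}$. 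The first task is to read off what $\Kk$ looks like in each of these pieces using the three properties of a self-indexing Morse function of the pair (minima of $h_\Kk$ in $Y_{\{1\}}$, saddles in $Y_{\{2\}}$, maxima in $Y_{\{3\}}$) together with the hypothesis that $\Gamma = L_{\{3/2\}}\cup\{\eta_i\}$ is in bridge position with respect to $\Sigma$.

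For the outer sectors: below level $3/2$ all critical points of $h_\Kk$ are minima and $\Kk\cap Y_{\{3/2\}} = L_{\{3/2\}}$, so $\Kk\cap Y_{[0,3/2]}$ is a collection of disks, one per minimum, with boundary $L_{\{3/2\}}$; since $h$ restricted to these disks has only minima, each is the trace of an isotopy of its boundary circle and is therefore isotopic rel boundary into $Y_{\{3/2\}}$. Appending the product region $(L_{\{3/2\}}\cap H')\times[3/2,2]\subset H'_{[3/2,2]}$ merely drags the $H'$--part of the boundary onto $H'_{\{2\}}=H_{31}$, so $\Dd_1$ is a collection of disks on which $h$ has only minima, hence a trivial disk system in $X_1$; turning $h$ upside down gives the same conclusion for $\Dd_3$. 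The middle piece $\Dd_2 = \Kk\cap X_2$ avoids the $2$--handles of $X_2$ (the attaching link $T$ was chosen disjoint from $\Gamma$ in Lemma~\ref{bridgeposition}) and so lies in a copy of $H\times I$; there it is the trace of resolving the bands $\{\eta_i\}$ pushed just inside $H$, that is, $(L_{\{3/2\}}\cap H)\times[3/2,2]$ union the bands union $(L_{\{5/2\}}\cap H)\times[2,5/2]$. Because $\Gamma$ is in bridge position, the $\{\eta_i\}$ are dual to the trivial tangle $L_{\{3/2\}}\cap H$, which keeps $\Dd_2$ a union of disks and, by the argument of Theorem~1.3 of~\cite{Meier-Zupan_Bridge_2017} (which, as already invoked for Lemma~\ref{bridgeposition}, does not use that $\Sigma$ is a sphere), shows $\Dd_2$ is a trivial disk system and that $L_{\{5/2\}}\cap H$ is again a trivial tangle.

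It then remains to identify the three tangles $\tau_{ij}=\Kk\cap H_{ij}$. For $\tau_{12}$ this is $L_{\{3/2\}}\cap H$ together with the vertical arcs $(L_{\{3/2\}}\cap\Sigma)\times[3/2,2]$ in the collar $\Sigma_{[3/2,2]}$ --- the trivial tangle $L_{\{3/2\}}\cap H$ with its endpoints pushed through the collar onto $\partial H_{12}=\Sigma_{\{2\}}$ --- and pushing each bridge disk for $L_{\{3/2\}}\cap H$ along the same collar exhibits $\tau_{12}$ as trivial. The tangle $\tau_{23}$ is handled identically, now using that $L_{\{5/2\}}\cap H$ is trivial (previous paragraph) and the collar $\Sigma_{[2,5/2]}$. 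Finally, since the bands lie in $H$ we have $L_{\{3/2\}}\cap H' = L_{\{5/2\}}\cap H'$, so $\tau_{31}=\Dd_3\cap\Dd_1 = \Kk\cap H'_{\{2\}}$ is exactly one copy of the trivial tangle $L_{\{3/2\}}\cap H'$ inside $H_{31}=H'_{\{2\}}$, hence trivial. Assembling these verifications gives that $(X_1,\Dd_1)\cup(X_2,\Dd_2)\cup(X_3,\Dd_3)$ is a generalized bridge trisection of $(X,\Kk)$.

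I expect the one genuinely substantive step to be the analysis of the middle piece $\Dd_2$: confirming that the duality between the bands $\{\eta_i\}$ and the trivial tangle $L_{\{3/2\}}\cap H$ really does force $\Dd_2$ to be a collection of \emph{trivial} disks and leaves $L_{\{5/2\}}\cap H$ trivial. This is precisely the content that~\cite{Meier-Zupan_Bridge_2017} establishes for banded bridge splittings of $2$--knots in $S^4$, and the only thing one must check is that that argument never uses that the splitting surface is a sphere; everything else is bookkeeping with the collars $\Sigma_{[3/2,2]}$ and $\Sigma_{[2,5/2]}$ that Lemma~\ref{morsetri} inserts between the handlebody pieces $H$, $H'$ and the central surface.
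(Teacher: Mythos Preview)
Your proof is correct and follows essentially the same route as the paper: verify that $\Dd_1$ and $\Dd_3$ are trivial because $h_\Kk$ has only minima (respectively maxima) there, identify $\tau_{31}$ and $\tau_{12}$ directly as the trivial tangles $L_{\{3/2\}}\cap H'$ and $L_{\{3/2\}}\cap H$ (the latter extended through the collar), and then defer the analysis of $\Dd_2$ and $\tau_{23}$ to the banded--bridge--splitting argument of~\cite{Meier-Zupan_Bridge_2017}, observing that it never uses that the splitting surface is a sphere. The paper pinpoints this last step as Lemma~3.1 of~\cite{Meier-Zupan_Bridge_2017} rather than Theorem~1.3; your explicit remark that $\Dd_2$ misses the $2$--handles of $X_2$ is a helpful detail the paper leaves implicit.
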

\begin{proof}
By Lemma~\ref{morsetri}, the underlying decomposition $X = X_1 \cup X_2 \cup X_3$ is a trisection, and thus we must show that $\Dd_i$ is a trivial disk system in $X_i$ and $\Dd_i \cap \Dd_j$ is a trivial tangle in the handlebody $X_i \cap X_j$.

Let $\tau = L_{\{3/2\}} \cap H$ and $\tau' = L_{\{3/2\}} \cap H'$, so that each of $\tau$ and $\tau'$ is a trivial tangle in $H$ and $H'$, respectively.  We note that by construction, $\Dd_1 = L_{[1,3/2]} \cup \tau'_{[3/2,2]}$, $\Dd_2 = \tau_{[3/2,5/2]}$, and $\Dd_3 = L_{[5/2,3]} \cup \tau'_{[2,5/2]}$.  Thus, there is a Morse function of the pair $(X_1,\Dd_1)$ that contains only minima, so that $\Dd_1$ is a collection of trivial disks in $X_1$.  Similarly, $(X_3,\Dd_3)$ contains only maxima, so that $\Dd_3 \subset X_3$ is a collection of trivial disks as well.  We also note that $\Dd_1 \cap \Dd_3 = \tau'_{\{2\}}$, a collection of trivial arcs in $X_1 \cap X_3$, and $\Dd_1 \cap \Dd_2 = \tau_{\{3/2\}} \cup (\pd \tau)_{[3/2,2]}$, a collection of trivial arcs in $X_1 \cap X_2$.

It only remains to show that $\Dd_2$ is a collection of trivial disks in $X_2$, and $\Dd_2 \cap \Dd_3$ is a collection of trivial arcs in $X_2 \cap X_3$.  However, this follows immediately from Lemma 3.1 of~\cite{Meier-Zupan_Bridge_2017}; although the proof of Lemma 3.1 is carried out in the context of the standard trisection of $S^4$, it can be applied verbatim here.
\end{proof}

\begin{proof}[Proof of Theorem~\ref{mainthm}]
By Lemma~\ref{trimorse}, there exists a self-indexing Morse function $h:X \rightarrow \R$ and Heegaard surface $\Sigma \subset Y_{\{3/2\}}$ such that $\Tt = \Tt(h,\Sigma)$.  Applying Lemma~\ref{doublemorse}, we have that there is an isotopy of $\Kk$ after which $h:(X,\Kk) \rightarrow \R$ is a self-indexing Morse function of the pair.  Moreover, by Lemma~\ref{bridgeposition}, there is a further isotopy of $\Kk$ after which the graph $\Gamma$ induced by the saddle points of $h_{\Kk}$ is in bridge position with respect to $\Sigma$.  Finally, the decomposition defined in Lemma~\ref{bridgetri} is a generalized bridge trisection of $(X,\Kk)$, completing the proof.
\end{proof}

We note that as in Lemma 3.3 and Remark 3.4 from~\cite{Meier-Zupan_Bridge_2017}, this process is reversible; in other words, every bridge trisection of $(X,\Kk)$ can be used to extract a handle decomposition of $\Kk$ within $X$.  The proof of Lemma 3.3 applies directly in this case, and when we combine it with Lemma~\ref{trimorse} above, we have the following:

\begin{proposition}\label{handlestuff}
If $\Tt$ is a $(g;k_1,k_2,k_3;b;c_1,c_2,c_3)$--generalized bridge trisection of $(X,\Kk)$, then there is a Morse function $h$ of the pair $(X,\Kk)$ such that
{\begin{enumerate}
\item $h$ has $k_1$ index one critical points, $g - k_2$ index two critical points, and $k_3$ index three critical points; and
\item $h_{\Kk}$ has $c_1$ minima, $b- c_2$ saddles, and $c_3$ maxima.
\end{enumerate}}
\end{proposition}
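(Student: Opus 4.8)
The plan is to run the construction of Lemma~\ref{bridgetri} in reverse, extracting a Morse function of the pair $(X,\Kk)$ from the spine data of the generalized bridge trisection $\Tt$. First I would invoke Lemma~\ref{trimorse} to produce a self-indexing Morse function $h\colon X\to\R$ and a Heegaard surface $\Sigma\subset Y_{\{3/2\}}$ with $\Tt=\Tt(h,\Sigma)$; by Lemma~\ref{morsetri} the number of index one, two, and three critical points of $h$ are then $k_1$, $g-k_2$, and $k_3$, respectively, which is exactly part (1) of the conclusion. The work is to arrange that the restriction $h_{\Kk}$ is also Morse with the prescribed handle counts, and this is where I would appeal to the reversibility of the bridge trisection construction in the spirit of Lemma~3.3 and Remark~3.4 of~\cite{Meier-Zupan_Bridge_2017}.

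The key steps, in order: (i) From the spine of $\Tt$ we have trivial tangles $\tau_{ij}$ in the handlebodies $H_{ij}$ and trivial disk systems $\Dd_i$ in the sectors $X_i$, with $|\Dd_i\cap\Dd_j|=b$ and $|\Dd_i|=c_i$. (ii) Because $\Dd_1$ is a trivial disk system of $c_1$ disks in the $1$--handlebody $X_1$, pushing it down the flow of $h$ realizes $\Dd_1$ as the trace of $c_1$ minima of $h_{\Kk}$ lying in the level $Y_{\{1\}}$; symmetrically $\Dd_3$ contributes $c_3$ maxima in $Y_{\{3\}}$. (iii) The middle sector $X_2$ with its trivial disk system $\Dd_2$, together with the two adjacent handlebody pieces, reconstitutes the product region $\Sigma\X[3/2,5/2]$ carrying the banded link $\Gamma=L_{\{3/2\}}\cup\{\eta_i\}$ in bridge position with respect to $\Sigma$; here Lemma~3.1 of~\cite{Meier-Zupan_Bridge_2017} applies verbatim, as noted in the proof of Lemma~\ref{bridgetri}. (iv) Each band $\eta_i$ corresponds to a saddle of $h_{\Kk}$ in the level $Y_{\{2\}}$, and the count of bands is exactly the number of arcs in a collection of shadows dual to $\tau_{12}$, namely $b-c_2$: the bridge arcs $L_{\{3/2\}}\cap H$ number $b$, of which $c_2$ are "closed off" into the disks $\Dd_2$ by the product structure, leaving $b-c_2$ genuine saddles. (v) Assembling the minima from (ii), the saddles from (iv), and the maxima from (ii), and smoothing, gives a Morse function of the pair $(X,\Kk)$ with the asserted critical point data, which is part (2).

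The main obstacle is bookkeeping the saddle count $b-c_2$ correctly: one must verify that exactly $c_2$ of the $b$ bridge points of $\tau_{12}$ become births/deaths absorbed into the disk system $\Dd_2$ rather than honest saddle points of $h_{\Kk}$, which amounts to carefully tracing through how the product decomposition $(X_2,\Dd_2)=(H_{12},\tau_{12})\X I$ interacts with the banded link diagram in $Y_{\{3/2\}}$. This is precisely the content that Lemma~3.1 of~\cite{Meier-Zupan_Bridge_2017} handles in the $S^4$ case, and since the argument there never uses that $\Sigma$ is a sphere, it transfers; so the proof is essentially a matter of citing that lemma together with Lemma~\ref{trimorse} and recording the resulting critical point counts, exactly as the statement already indicates.
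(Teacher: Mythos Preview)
Your approach is essentially identical to the paper's: the paper does not give a standalone proof of this proposition but simply remarks that the construction of Lemma~\ref{bridgetri} is reversible ``as in Lemma~3.3 and Remark~3.4 from~\cite{Meier-Zupan_Bridge_2017},'' and that combining this with Lemma~\ref{trimorse} yields the result. Your steps (i)--(v) flesh out exactly this outline; the only slip is that in your final paragraph you attribute the reverse-direction bookkeeping to Lemma~3.1 of~\cite{Meier-Zupan_Bridge_2017} rather than Lemma~3.3 (Lemma~3.1 is the forward direction, as you correctly note in step~(iii)), and your informal justification of the saddle count $b-c_2$ (``$c_2$ are closed off'') would be cleaner stated via the duality condition on the bands, which forces the graph $\tau\cup\{\eta_i\}$ to be a forest on $b$ vertices with $b-c_2$ edges.
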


We can now justify the diagrams in Figure~\ref{fig:1bDiags}.  By Proposition~\ref{handlestuff}, a 1--bridge trisection will give rise to a banded link diagram without bands corresponding to a Morse function $h$ of the pair $(X,\Kk)$ such that $h_{\Kk}$ has a single minimum and maximum.  From the shadow diagrams in Figure~\ref{fig:1bDiags}, we extract banded link diagrams, shown directly beneath each shadow diagram.    In each case, the black curve in Figure~\ref{fig:1bDiags} bounds a disk (the minimum of $h_{\Kk}$) in the 4--dimensional 0--handle, and a disk (the maximum of $h_{\Kk}$) in the union of the 2--handles with the 4--handle.  For example, in the first and third figure, we see that the 2--knot is the union of a trivial disk in the 0--handle, together with a cocore of a 2--handle.  The second figure is a well-known description of the quadric.  See Subsection~\ref{subsec:complex} above. The fourth figure can be obtained by connected summing the first figure with its mirror.

\subsection{Classification of $(1,1)$--generalized bridge trisections}\label{subsec:mstab}\ 

In this subsection, we prove Proposition~\ref{oneone}, classifying $(1,1)$--generalized bridge trisections.

\begin{proof}[Proof of Proposition~\ref{oneone}]

First, suppose that $(X,\Kk)$ admits a $(1,1)$--generalized bridge trisection $\Tt$.  Then $c_1 = c_2 = c_3 = 1$, $\chi(\Kk) = 2$, and $\Kk$ is a 2--sphere.  In addition, by Proposition~\ref{handlestuff}, there is a self-indexing Morse function $h$ on $(X,\Kk)$ so that $h_{\Kk}$ has one minimum, one maximum, and no saddles.  If $h$ has no index two critical points, then $(X,\Kk)$ is the double of a trivial disk in a 4--ball or 1--handlebody; thus $\Kk$ is unknotted.  If any one $k_i = 1$, then after permuting indices, we may assume that the induced $h$ has no index two critical points.  Thus, the only remaining case is $k_1 = k_2 = k_3 = 0$, and so $X = \CP^2$ or $\overline{\CP}^2$.

We will only consider the case $X = \CP^2$; parallel arguments apply by reversing orientations.  Let $h$ be a self-indexing Morse function for $\Tt$, so that $Y_{\{3/2\}}$ is diffeomorphic $S^3$, $L_{\{3/2\}}$ is an unknot we call $C$, and $T$ is a $(+1)$--framed unknot disjoint from $C$ in $Y_{\{3/2\}}$.  In addition, attaching a 2--handle to $T$ yields another copy of $S^3$, in which $C$ remains unknotted.  In other words, $C$ is an unknot in $S^3$ that is still unknotted after $(+1)$--Dehn surgery on $T$.  There are three obvious links $C \cup T$ that satisfy these requirements: a 2-component unlink, a Hopf link, and the torus link $T(2,2)$.  The first of these three corresponds to the unknotted 2--sphere.  The next two correspond to $\CP^2$ and $\Qq$, respectively.  We claim no other links $C \cup T$ of this type exist.
	
Consider $T$ as a (nontrivial) knot in the solid torus $S^3\setminus\nu(C)$.  Since $C$ remains unknotted after $(+1)$--surgery on $T$, it follows that $T$ is a knot in a solid torus with a solid torus surgery.  Let $\omega$ denote the linking number of $C$ and $T$, so that $\omega$ is also the winding number of $T$ in $S^3 \setminus \nu(C)$.  By~\cite{Gabai_1-bridge_1990}, one of the following holds:
\begin{enumerate}
\item $\omega = 1$ and $T \cup C$ is the Hopf link,
\item $\omega = 2$ and $T \cup C$ is the torus link $T(2,2)$, or
\item $\omega \geq 3$ and the slope of the surgery on $T$ is at least four.
\end{enumerate}
The third case contradicts the assumption that the surgery slope is one, completing the proof of part (1).

Part (2) follows from a similar argument.  Suppose $X = S^4$, $S^1 \times S^3$, $\CP^2$, or $\overline{\CP}^2$ and let $\Kk \subset X$ admit a $(2,1)$--generalized bridge trisection.  If some induced handle decomposition on $X$ has no 2--handles, then $\Kk$ is unknotted by the above arguments.  It follows from the genus two classification that the underlying trisection is either a $(2;1,1,0)$--trisection of $S^4$ or a $(2;1,0,0)$--trisection of $\CP^2$ or $\overline{\CP}^2$, up to permutation of the $k_i$'s.  The above argument applies directly to dispatch of the second case.  In the first case, we may assume the induced handle decomposition for $S^4$ is a canceling 2/3--pair, so that a banded link diagram for $\Kk$ consists of an unknot $C \subset S^3$ together with the attaching curve $T$ for the 2--handle, where $T$ is a 0--framed unknot.  In the complement of $C$, the curve $T$ is a knot in a solid torus with a surgery to $(S^1\times D^2)\#(S^1\times S^2)$.  By Scharlemann, $T$ is contained in a 3--ball, is unknotted, and is zero-framed~\cite{Scharlemann_Producing_1990}.  It follows that the 2--knot $\Kk$ is unknotted.
\end{proof}


\subsection{Meridional stabilization}\label{subsec:mstab}\ 

Consider a link $L$ in a 3--manifold $Y$, equipped with a $(g,b)$--bridge splitting $(Y,L) = (H_1,\tau_1) \cup (H_2,\tau_2)$, where $b \geq 2$.  Fix a trivial arc $\tau' \in \tau_2$, and let $H_1' = H_1 \cup \overline{\nu(\tau')}$ and $H_2' = H_2 \setminus \nu(\tau')$.  In addition, let $\tau_i' = L \cap H_i'$, so that $\tau_1' = \tau_1 \cup \tau'$ and $\tau_2' = \tau_2 \setminus \tau'$.  Then the decomposition $(Y,L) = (H_1',\tau_1') \cup (H_2',\tau_2')$ is a $(g+1,b-1)$--bridge splitting which is called a \emph{meridional stabilization} of the given $(g,b)$--splitting.  This construction is well-known.

In this subsection, we will extend meridional stabilization to a similar construction involving generalized bridge trisections in order to prove Theorem~\ref{mainthm2}.  Let $\Tt$ be a generalized bridge trisection for a connected knotted surface $\Kk \subset X$ with complexity $(g;k_1,k_2,k_3;b;c_1,c_2,c_3)$, and assume that $c_1\geq 2$.  Since $\Kk$ is connected, there exists an arc $\tau' \in \tau_{23}$ with the property that the two endpoints of $\tau'$ lie in different components of $\Dd_1$.  Define
\begin{enumerate}
	\item $(X_1',\Dd_1') = \left(X_1 \cup \overline{\nu(\tau')},\Dd_1 \cup \left(\overline{\nu(\tau')} \cap \Kk\right)\right)$ and 
	\item $(X_j',\Dd_j') = \left(X_j \setminus \nu(\tau'),\Dd_j \setminus \nu(\tau')\right)$ for $j=2,3$,
\end{enumerate}
and let $\Tt'$ be the decomposition
$$(X,\Kk) = (X_1',\Dd_1')\cup(X_2',\Dd_2')\cup(X_3',\Dd_3').$$
We say that the decomposition $\Tt'$ is obtained from $\Tt$ via \emph{meridional 1--stabilization along $\tau'$}.  We define meridional \emph{$i$--stabilization} similarly for $i =2$ or $3$.  Observe that the assumption that $\Kk$ is connected is slightly stronger than necessary; the existence of the arc $\tau' \in \tau_{jk}$ connecting two disks in $\Dd_i$ is necessary and sufficient.  Notably, $\Tt'$ is a generalized bridge splitting for $(X,\Kk)$, which we verify in the next lemma. Figure~\ref{fig:mstab} shows the local picture of a meridional 1--stabilization.

\begin{figure}[h!]
	\centering
	\includegraphics[width=.7\textwidth]{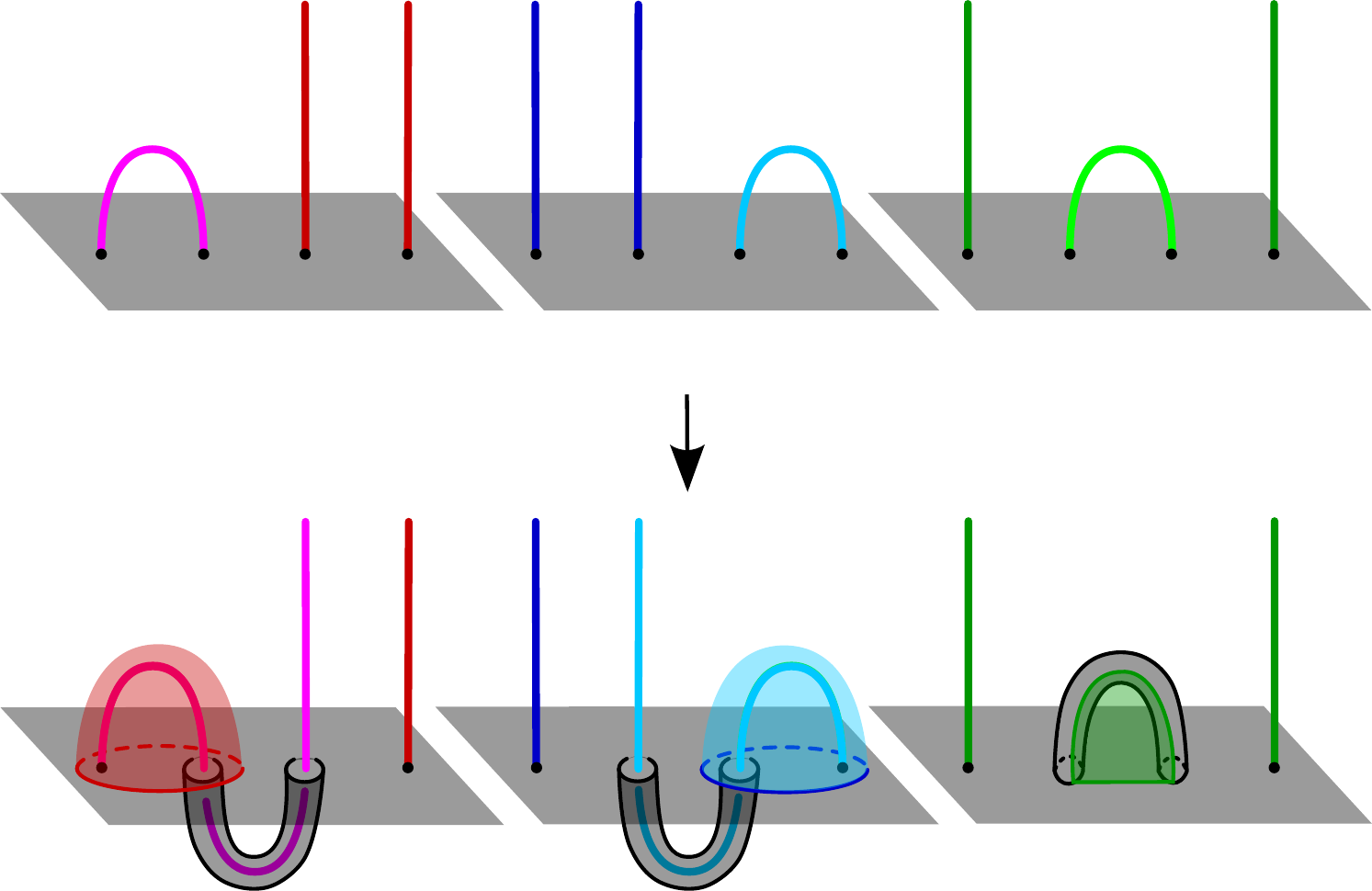}
	\caption{A sample meridional 1--stabilization along $\tau'$ (light green, top right).  Meridional stabilization increases the genus of the central surface by one, and a new compressing curve is shown for each handlebody in the bottom half of the figure.}
	\label{fig:mstab}
\end{figure}

\begin{lemma}\label{lem:mstab}
	The decomposition $\Tt'$ of $(X,\Kk)$ is a generalized bridge trisection of complexity $(g+1;k_1+1,k_2,k_3;b-1;c_1-1,c_2,c_3)$.
\end{lemma}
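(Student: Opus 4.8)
The plan is to verify the four defining properties of a generalized bridge trisection for $\Tt'$ in turn, and then to track the eight complexity parameters. First I would observe that the operation is modeled on the classical meridional stabilization recalled just above: attaching $\overline{\nu(\tau')}$ to $X_1$ amounts to attaching a 4--dimensional 1--handle whose attaching region is a regular neighborhood of the two endpoints of $\tau'$ on $\partial X_1$, so $X_1' = X_1 \cup \overline{\nu(\tau')}$ is again a 1--handlebody, of genus $k_1+1$; simultaneously $X_2' = X_2 \setminus \nu(\tau')$ and $X_3' = X_3 \setminus \nu(\tau')$ are obtained by removing a neighborhood of a trivial arc from a 1--handlebody, which does not change their diffeomorphism type (it is the 4--dimensional analogue of the fact that removing a neighborhood of a boundary-parallel arc from a handlebody gives a handlebody of the same genus), so $k_2' = k_2$ and $k_3' = k_3$. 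The central surface $\Sigma' = X_1' \cap X_2' \cap X_3'$ is $\Sigma$ with a tube (the boundary of the cocore of the new 1--handle) attached along the two feet of $\tau'$, hence has genus $g+1$; this matches $b' = b-1$ since two bridge points of $\Kk \cap \Sigma$, namely $\partial \tau'$, are absorbed into the tube.

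Next I would check the handlebody conditions $H_{ij}' = X_i' \cap X_j'$. The crucial point is that $\overline{\nu(\tau')}$ is a 4--ball neighborhood of the trivial arc $\tau' \subset H_{23}$, and since $\tau'$ is trivial it has a bridge disk $\Delta'$ with $\partial \Delta' = \tau' \cup \tau''$ for a shadow $\tau'' \subset \Sigma$. I would use $\Delta'$ to see that $H_{12}' = H_{12} \cup (\overline{\nu(\tau')} \cap H_{12})$ is $H_{12}$ with a 3--dimensional 1--handle attached (the piece of $\partial\overline{\nu(\tau')}$ meeting the $X_1$--$X_2$ wall), hence a genus $g+1$ handlebody; symmetrically $H_{31}'$ is $H_{31}$ with a 1--handle attached, again genus $g+1$; and $H_{23}' = H_{23} \setminus \nu(\tau')$, which is the genus $g+1$ handlebody obtained by excavating a neighborhood of a trivial arc from the genus $g$ handlebody $H_{23}$ (the standard meridional-stabilization move in dimension three, exactly as recalled in the paragraph preceding the lemma). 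So $\Sigma' = \Sigma \# (\text{tube})$ is the common central surface, of genus $g+1$, and all three walls are genus $g+1$ handlebodies.

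Then I would verify the triviality of the disk and tangle systems. For $\Dd_1' = \Dd_1 \cup (\overline{\nu(\tau')} \cap \Kk)$: the piece $\overline{\nu(\tau')} \cap \Kk$ is an annulus or band (the surface part of the meridional tube), and banding two of the trivial disks of $\Dd_1$ together along the trivial arc $\tau'$ using the bridge disk $\Delta'$ produces a new collection of disks which is still simultaneously boundary-parallel into $\partial X_1'$ — one pushes the band across $\Delta'$ onto $\partial X_1'$ and then appeals to the uniqueness of trivial disk systems (Lemma~\ref{unq}) over the enlarged 1--handlebody $X_1'$; hence $c_1' = c_1 - 1$. For $j = 2,3$, $\Dd_j' = \Dd_j \setminus \nu(\tau')$ is obtained by deleting one trivial disk from a trivial disk collection, so it is trivial in $X_j'$ and $c_2' = c_2$, $c_3' = c_3$; Lemma~\ref{echar} then forces $b' = c_1' + c_2' + c_3' - \chi(\Kk) = b - 1$, consistent with the tube count. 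Finally, for $i \neq j$ the tangle $\tau_{ij}' = \Dd_i' \cap \Dd_j'$ is obtained from $\tau_{ij}$ by either removing the arc $\tau'$ (on the two walls meeting $\tau'$, i.e. $H_{12}$ and $H_{31}$ in the notation of the construction — being careful about which two walls $\tau'$ borders) or leaving it unchanged, and in each case one has a trivial tangle in the corresponding handlebody $H_{ij}'$, using the bridge disk $\Delta'$ together with bridge disks for the remaining arcs, after a cut-and-paste to make these disjoint.

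The main obstacle I expect is the triviality of $\Dd_1'$: one must argue that banding two boundary-parallel disks together along $\tau'$, while simultaneously the ambient 1--handlebody grows by a 1--handle, still yields a boundary-parallel collection. I would handle this by exhibiting the isotopy explicitly near $\Delta' \cup \overline{\nu(\tau')}$ — pushing the new band across the bridge disk $\Delta'$ out to $\partial X_1'$, which reduces the claim to checking that the resulting disks lie on $\partial X_1'$ — and then invoking Lemma~\ref{unq} to conclude that any such collection is the unique trivial one bounded by the unlink $\partial \Dd_1' \subset \partial X_1'$ (noting that $\partial \Dd_1'$ is indeed an unlink, being $\partial \Dd_1$ with two components banded together along an arc disjoint from the rest, hence still an unlink in the connect-summed boundary $\partial X_1'$). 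The secondary bookkeeping obstacle is simply keeping straight which two of the three walls border the chosen arc $\tau' \in \tau_{23}$ and hence which $\tau_{ij}$'s lose an arc versus which lose a bridge point; this is routine but must be stated carefully so that the genus of every wall comes out to $g+1$.
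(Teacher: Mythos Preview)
Your overall strategy matches the paper's --- verify each piece of $\Tt'$ directly and track parameters --- and the 4--dimensional analysis ($X_1'$ gains a 1--handle, while $X_2',X_3'$ are unchanged) is correct. But the surface--and--tangle bookkeeping contains two genuine errors. First, for $j\in\{2,3\}$ you assert that $\Dd_j'=\Dd_j\setminus\nu(\tau')$ is ``obtained by deleting one trivial disk'' yet conclude $c_j'=c_j$; this is self-contradictory, and the premise is wrong. The arc $\tau'$ lies in $H_{23}\subset\partial X_j$, so $\nu(\tau')\cap\Dd_j$ is only a half-disk collar of a boundary arc of one component of $\Dd_j$, not an entire disk. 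The correct statement, which the paper uses, is simply that $(X_j',\Dd_j')\cong(X_j,\Dd_j)$ as pairs, since only a boundary collar has been shaved off.

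Second, your tangle analysis is reversed. Since $\tau'\in\tau_{23}$, it is $\tau_{23}'=\tau_{23}\setminus\{\tau'\}$ (inside the excavated $H_{23}'=H_{23}\setminus\nu(\tau')$) that literally loses the arc. On the other two walls $H_{1j}'$ for $j\in\{2,3\}$, the tangle $\tau_{1j}'$ is $\tau_{1j}$ with two of its arcs joined through $\tau'$ into a single arc; thus all three tangles drop to $b-1$ strands and none is ``unchanged.'' The paper packages this in one stroke by observing that, for $j\in\{2,3\}$, the new splitting $(\partial X_j,\partial\Dd_j)=(H_{1j}',\tau_{1j}')\cup_{\Sigma'}(H_{23}',\tau_{23}')$ is exactly the classical 3--dimensional meridional stabilization recalled just before the lemma, which immediately yields genus $g+1$ handlebodies containing trivial $(b-1)$--strand tangles. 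Finally, for the triviality of $\Dd_1'$ you do not need Lemma~\ref{unq}: the paper argues directly that once $D_1,D_2$ are pushed into $\partial X_1$ and the band $\Dd'=\overline{\nu(\tau')}\cap(\Dd_2\cup\Dd_3)$ is seen to be boundary-parallel in the attached 1--handle $X'$, the banded disk $D_1\cup\Dd'\cup D_2$ is visibly boundary-parallel in $X_1'=X_1\cup X'$. (Note also that your bridge disk $\Delta'$ lives in $H_{23}$, not in $X_1'$, so it is not the object to push across.)
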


\begin{proof}
	Since $\tau'\subset \partial X_j$ for $j=2$ and $3$, we have that $(X_j',\Dd_j')\cong (X_j,\Dd_j)$.  Let $X' = \overline{\nu(\tau')} \cap (X_2 \cup X_3)$ and $\Dd' = \overline{\nu(\tau')} \cap (\Dd_2 \cup \Dd_3)$.   Then $X'$ is a topological 4--ball intersecting $X_1$ in two 3--balls in $\partial X_1$; i.e. $X'$ is a 1--handle.  It follows that $X_1'$ is obtained from $X_1$ by the attaching a 1--handle, so $X_1' \cong\natural^{k_1+1}(S^1\times B^3)$.  Similarly, $\Dd'$ is a band connecting disks $D_1$ and $D_2$ in $\Dd_1$.  Since these disks are trivial, we can assume without loss of generality that $D_1$ and $D_2$ have been isotoped to lie in $\partial X_1$, and since $\Dd'$ is boundary parallel inside $X'$, the disk $D' = D_1\cup\Dd'\cup D_2$ is boundary parallel in $X_1'$.  It follows that $\Dd_1' = \Dd_1\setminus (D_1\cup D_2)\cup D'$ is a trivial $(c_1-1)$--disk system.
	
	It remains to verify that the 3--dimensional components of the new construction are trivial tangles in handlebodies.  Observe that for $\{j,k\} = \{2,3\}$, the decomposition $(\pd X_j,\pd \Dd_j) = (H_{1j}',\tau_{1j}') \cup (H_{jk}',\tau_{jk}')$ is a 3--dimensional meridional stabilization of $(\pd X_j,\pd \Dd_j) = (H_{1j},\tau_{1j}) \cup (H_{jk},\tau_{jk})$.  Thus, $\tau_{ij}'$ is a trivial  $(b-1)$--strand tangle in the genus $g+1$ handlebody $H_{ij}'$, as desired.
\end{proof}

We can now prove Theorem~\ref{mainthm2}, which implies Corollary~\ref{maincor} as an immediate consequence.

\begin{proof}[Proof of Theorem~\ref{mainthm2}]
	Start with a generalized bridge trisection of $(X,\Kk)$.  If there is a spanning arc $\tau'$ of the type that is necessary and sufficient for a meridional stabilization, then we perform the stabilization.  Thus, we assume there are no such spanning arcs.  If $\Dd_i$ contains $c_i$ disks, then since there are no $\tau'$--type arcs in $\tau_{jk}$ for $i,j,k$ distinct, it follows that the $c_i$ disks belong to distinct connected components of $\Kk$.  Thus, $c_i=n$, and $\chi(\Kk) = c_1+c_2+c_3-b = 3n-b$, so that $b = 3n - \chi(\Kk)$.
\end{proof}

\section{Uniqueness of generalized bridge trisections}\label{sec:unique}

In general, the types of splittings discussed in this article are not unique up to isotopy, but a general principle is that two splittings for a fixed space become isotopic after some number of generic operations, such as the meridional stabilization operation defined above. Given a Heegaard splitting $\Sigma$ of $Y$, \emph{stabilization} can be defined as performing the connected sum of $Y$ and $S^3$ along the standard genus one splitting of $S^3$, and the Reidemeister-Singer Theorem asserts that any two Heegaard splittings for $Y$ become isotopic after some number of stabilizations~\cite{Reidemeister_Zur-dreidimensionalen_1933, Singer_Three-dimensional_1933}.  Similarly, \emph{stabilization} for a trisection $\Tt$ of a 4--manifold $X$ can be viewed as taking the connected sum of $\Tt$ and the standard genus three trisection of $S^4$, and Gay and Kirby proved that any pair of trisections for $X$ become isotopic after some number of trisections~\cite{Gay-Kirby_Trisecting_2016}.

For classical bridge splittings $(Y,K) = (H_1,\tau_1) \cup_{\Sigma} (H_2,\tau_2)$ there are two generic operations (in additional to meridional stabilization).  One, called \emph{stabilization}, is a stabilization of the Heegaard surface $\Sigma$ performed away from the knot $K$, increasing the genus of the bridge splitting.  The other operation increases the number of arcs in $\tau_i$:  Any bridge splitting of $(Y,K)$ obtained by connected summing with the standard $(0,2)$--splitting of the unknot in $S^3$ is said to be obtained by \emph{elementary perturbation}, and any splitting obtained by a finite number of elementary perturbations is called a \emph{perturbation}.  As with Heegaard splittings and trisections, it is known that if two bridge splittings of $(Y,K)$ have the same underlying Heegaard splitting, then there is a bridge splitting that is isotopic to perturbations of each of the original splittings, called a common perturbation~\cite{Hayashi_Stable_1998,Zupan_Bridge_2013}, and thus any two bridge splittings of $(Y,K)$ become isotopic after stabilizations and perturbations.  The purpose of this section is to define perturbations for generalized bridge trisections and lay out steps toward a proof of a corresponding uniqueness theorem in this setting.

Let $L$ be an $n$--component unlink in $Y = \#^k(S^1 \X S^2)$.  The \emph{standard} bridge splitting of $L$ is defined to be the connected sum of the the standard genus $k$ Heegaard splitting of $Y$ with the standard (classical) $n$--bridge splitting of $L$ (the connected sum of $n$ copies of the 1--bridge splitting of the unknot). The first ingredient we will need to define perturbation is the following proposition, which uses a result in~\cite{BacSch_2005_Distance-and-bridge} and follows from a proof identical to that of Proposition 2.3 in~\cite{Meier-Zupan_Bridge_2017}.

\begin{proposition}\label{trivbridge}
Every bridge splitting of an unlink $L$ in $\#^k(S^1 \X S^2)$ is isotopic to some number of perturbations and stabilizations performed on the standard bridge splitting.
\end{proposition}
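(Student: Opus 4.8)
The plan is to mimic the proof of Proposition 2.3 in~\cite{Meier-Zupan_Bridge_2017}, adapting its steps to the setting where the ambient 3--manifold is $\#^k(S^1 \X S^2)$ rather than $S^3$. The key input is a bridge-distance result from~\cite{BacSch_2005_Distance-and-bridge}: a bridge splitting that is not perturbed (and whose Heegaard surface is not stabilized) has bridge distance at least two, so in particular the splitting is not ``too simple.'' The idea is then to run a minimal-genus/minimal-bridge-number argument on an arbitrary bridge splitting $(H_1,\tau_1)\cup_\Sigma (H_2,\tau_2)$ of the unlink $L$ and conclude that, unless it is already standard, it must be perturbed or stabilized, so we may reduce its complexity.

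First I would set up the induction on the pair $(g,b)$, ordered lexicographically, where $g$ is the genus of $\Sigma$ and $b$ is the bridge number of the splitting. The base case is the standard splitting of $L$ with $g = k$ and $b = n$, for which there is nothing to prove (and the classical uniqueness of bridge splittings of unlinks in $S^3$, together with Waldhausen's theorem that Heegaard splittings of $\#^k(S^1\X S^2)$ are standard, handles the minimal-complexity case). For the inductive step, I would invoke~\cite{BacSch_2005_Distance-and-bridge} to conclude that either the underlying Heegaard surface $\Sigma$ is stabilized in $Y = \#^k(S^1\X S^2)$, or the bridge splitting of $L$ is perturbed, or the bridge distance is at least two. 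In the first case, destabilize $\Sigma$ away from $L$ (using an innermost-disk argument to push the destabilizing pair of curves off $\tau_1 \cup \tau_2$), reducing $g$; in the second case, unperturb, reducing $b$; apply the inductive hypothesis either way. So the crux is ruling out the third case: a bridge splitting of an \emph{unlink} cannot have bridge distance $\geq 2$ unless it is the trivial one. Here I would use that $L$ bounds a collection of disjoint disks, hence $L$ is trivial in each handlebody in a strong sense, and extract from these spanning disks an essential disk in one handlebody disjoint from an essential disk in the other (in the complement of $L$) — this forces the distance to be $0$ or $1$, contradiction. A bit of care is needed because the handlebodies have positive genus, so ``essential disk disjoint from $L$'' must be handled via the compressing disks coming from the $S^1\X S^2$ summands together with bridge disks for the trivial tangles.

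The main obstacle I expect is precisely this last step: verifying that a genuinely reduced (unperturbed, unstabilized) bridge splitting of the unlink in $\#^k(S^1\X S^2)$ is forced to be standard, rather than merely having small distance. In the $S^4$/$S^3$ setting of~\cite{Meier-Zupan_Bridge_2017} this follows from the classical classification of bridge splittings of unlinks in $S^3$, and I would argue that the positive-genus case reduces to it: after destabilizing the Heegaard surface maximally and unperturbing maximally, the distance argument above shows the splitting is ``efficient,'' and then one compresses along the $S^1\X S^2$ compressing disks disjoint from $L$ to reduce to the genus-zero statement, where classical results (and the uniqueness of trivial disk systems, Lemma~\ref{unq}, applied one dimension down) close the argument. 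Since the authors explicitly assert that ``a proof identical to that of Proposition 2.3 in~\cite{Meier-Zupan_Bridge_2017}'' works, I would in practice cite that proof and indicate only the two or three places where $S^3$ is replaced by $\#^k(S^1\X S^2)$ — namely the base case (Waldhausen in place of Alexander) and the innermost-disk bookkeeping when destabilizing in the presence of the extra handles.
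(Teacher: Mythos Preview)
Your proposal is correct and matches the paper's approach: the paper does not supply an independent argument but simply records that the proposition ``uses a result in~\cite{BacSch_2005_Distance-and-bridge} and follows from a proof identical to that of Proposition 2.3 in~\cite{Meier-Zupan_Bridge_2017},'' which is exactly the strategy you outline and ultimately advocate citing. Your additional sketch of the inductive structure (destabilize/unperturb, then use the distance constraint coming from the spanning disks of the unlink to rule out a genuinely irreducible splitting) is a faithful expansion of what that cited proof does, with the only required modifications being the ones you name.
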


Consider a bridge trisection $\Tt$ for a knotted surface $\Kk \subset X$, with components notated as above.  Proposition~\ref{trivbridge} implies the key fact that $\Kk$ admits shadow diagram $((\A,a),(\n,b),(\g,c))$ such that a pair of collections of arcs, say $a$ and $b$ for convenience, do not meet in their interiors, and in addition, the union $a \cup b$ cuts out a collection of embedded disks $\Dd_*$ from the central surface $\Sigma$.  Choose a single component $D_*$ of these disks together with an embedded arc $\delta_*$ in $D_*$ which connects an arc $a' \in a$ to an arc $b' \in b$.  Note that $\Dd_*$ is a trivialization of the disks $\Dd_1\subset X_1$ bounded by $\tau_{31} \cup \tau_{12}$ in $\partial X_1 = H_{31} \cup H_{12}$, so that we may consider $\delta_*$ and $D_*$ to be embedded in the surface $\Kk$.  In addition, there is an isotopy of $\Dd_*$ in $\pd X_1$ pushing the shadows $a \cup b$ onto arcs in $\tau_{31} \cup \tau_{12}$, making $D_*$ transverse to $\Sigma$ and carrying $\delta_*$ to an embedded arc in $\partial X_1$ that meets the central surface $\Sigma$ in one point.

Let $\Delta$ be a rectangular neighborhood of $\delta_*$ in $D_*$, and consider the isotopy of $\Kk$, supported in $\Delta$, which pushes $\delta_* \subset\Kk$ away from $X_1$ in the direction normal to $\pd X_1$.  Let $\Kk'$ be the resulting embedding, which is isotopic to $\Kk$.  The next lemma follows from the proof of Lemma~6.1 in~\cite{Meier-Zupan_Bridge_2017}.

\begin{lemma}
The embedding $\Kk'$ is in $(b+1)$--bridge position with respect to the trisection $X = X_1 \cup X_2 \cup X_3$, and if $c_i' = | \Kk' \cap X_i|$, then $c_1' = c_1+1$, $c_2' = c_2$, and $c_3' = c_3$.
\end{lemma}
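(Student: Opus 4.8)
The plan is to verify the three conditions in the definition of a generalized bridge trisection for $(X,\Kk')$ with respect to the (unchanged) trisection $X=X_1\cup X_2\cup X_3$, and then to read off the parameters. The verification is local and parallels the proof of Lemma~6.1 in~\cite{Meier-Zupan_Bridge_2017}, whose argument never uses that the central surface is a sphere. First I would localize: the isotopy carrying $\Kk$ to $\Kk'$ is supported in a regular neighborhood $B$ of $\Delta$ in $X$, and $B$ can be chosen small enough to meet $\Tt$ in a standard trisected ball, so that $B\cap X_i$, $B\cap H_{ij}$, and $B\cap\Sigma$ are balls of the expected dimensions and $B$ meets $\Dd_1$ only in (a neighborhood of) $\Delta$. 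Outside $B$ nothing changes, so it suffices to compare $(B,\Kk\cap B)$ with $(B,\Kk'\cap B)$ and to confirm that triviality of the disk systems and tangles persists globally.

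Next I would read off the parameters. Since $\delta_*$ is an arc in $D_*$ with both endpoints on $\partial D_*$, the rectangle $\Delta$ separates $D_*$ into two subdisks, which remain in $X_1$ after $\Delta$ is pushed off $\pd X_1$; thus the one disk $D_*$ of $\Dd_1$ is replaced by two, and $c_1'=c_1+1$. The pushed rectangle becomes a finger that exits $X_1$ through $H_{31}$ near the $\tau_{31}$--endpoint of $\delta_*$, passes through $X_3$ and then through $X_2$ (crossing $H_{23}$ near $\delta_*\cap\Sigma$), and re-enters $X_1$ through $H_{12}$ near the $\tau_{12}$--endpoint of $\delta_*$. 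The portion of the finger lying in $X_3$ is glued along an arc of $\tau_{31}$ in $\pd X_3$ to the component of $\Dd_3$ that $D_*$ met there, and likewise in $X_2$; hence these portions enlarge existing components rather than creating new ones, so $c_2'=c_2$ and $c_3'=c_3$. The finger adds exactly one arc to each of $H_{12}$, $H_{23}$, $H_{31}$ and two points to $\Kk\cap\Sigma$, so $\Kk'$ is in $(b+1)$--bridge position; as a check, Lemma~\ref{echar} gives $\chi(\Kk')=(c_1+1)+c_2+c_3-(b+1)=\chi(\Kk)$, as it must.

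It then remains to verify triviality of the pieces. Each $\Dd_i'$ is obtained from $\Dd_i$ by a modification inside $B$ that either splits a boundary-parallel disk along a boundary-to-boundary arc or attaches a small boundary-parallel appendage to one; in either case every component of $\Dd_i'$ is still boundary parallel, and $\partial\Dd_i'\subset\pd X_i$ differs from $\partial\Dd_i$ by band moves supported in a ball, so it is still an unlink. Lemma~\ref{unq} then gives that $\Dd_i'$ is a trivial disk system. Each tangle $\tau_{ij}'$ is $\tau_{ij}$ together with one short arc supported in $B$ that visibly bounds a bridge disk in $B$; after pushing a system of bridge disks for $\tau_{ij}$ off $\pd B$ by an innermost-disk argument they can be taken disjoint from this one, so $\tau_{ij}'$ is a trivial tangle. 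This is precisely the bookkeeping performed in~\cite{Meier-Zupan_Bridge_2017}, Lemma~6.1, for the genus zero trisection of $S^4$, and it transfers verbatim because it only uses the local product structure and innermost-disk arguments near $\Sigma$.

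The step I expect to be the main obstacle is the triviality of $\Dd_1'$ and of the new tangles: one must ensure that the two subdisks replacing $D_*$---now joined through $X_2\cup X_3$ by the finger---can still be isotoped simultaneously into $\pd X_1$ relative to their boundaries, and that the three new tangle arcs bound pairwise disjoint bridge disks disjoint from trivializations of the old tangles. Both follow by keeping $B$ small and invoking Lemma~\ref{unq} together with standard cut-and-paste arguments, but that is where the genuine content lies; the remainder is combinatorial bookkeeping.
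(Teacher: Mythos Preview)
Your proposal is correct and takes essentially the same approach as the paper: the paper's entire proof is a one-line citation of Lemma~6.1 in~\cite{Meier-Zupan_Bridge_2017}, and you have simply unpacked that citation, explicitly noting (as the paper implicitly does) that the argument there is local and does not use the genus of the central surface. Your description of how the finger redistributes the pieces among the $X_i$ and $H_{ij}$, together with the Euler-characteristic check, is the standard bookkeeping, and your identification of the triviality verifications as the only substantive content is on point.
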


We call the resulting bridge trisection an \emph{elementary perturbation} of $\Tt$, and if $\Tt'$ is the result of some number of elementary perturbations performed on $\Tt$, we call $\Tt'$ a \emph{perturbation} of $\Tt$.  Work in~\cite{Meier-Zupan_Bridge_2017} also makes clear how to perturb via a shadow diagram. View the rectangle $\Delta$ as being contained in $\Sigma$, and parameterize it as $\Delta = \delta_*\times I$.  Now, crush $\Delta$ to a single arc $c'=\ast\times I$ that meets $\delta_*$ transversely once.  Considering the arc $c'$ as a shadow arc for the third tangle, the result is a shadow diagram for the elementary perturbation of $\Tt$.  See Figure~\ref{fig:pert}.

\begin{figure}[h!]
	\centering
	\includegraphics[width=.8\textwidth]{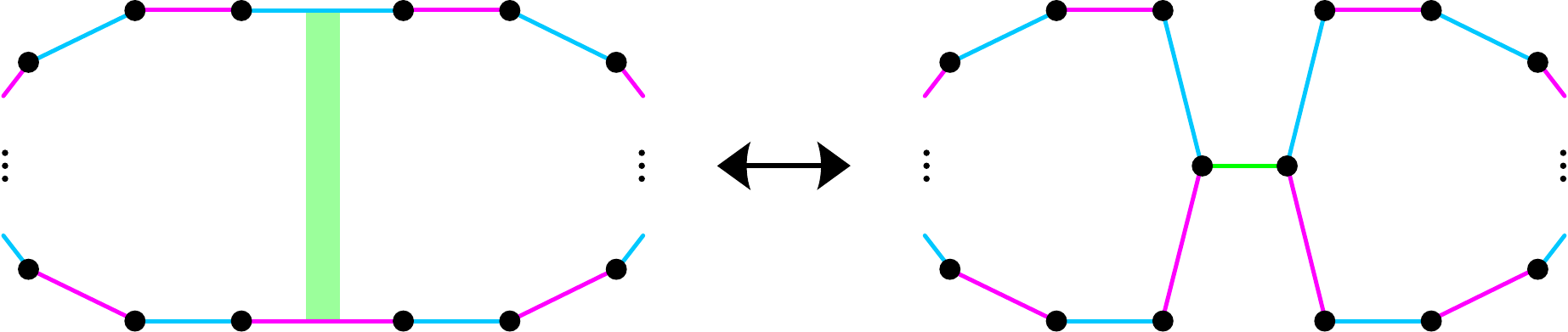}
	\caption{An illustration (at the level of the shadow diagram) of an elementary 1--perturbation of a generalized bridge splitting.}
	\label{fig:pert}
\end{figure}

In~\cite{Meier-Zupan_Bridge_2017}, the authors prove that any two bridge trisections for a knotted surface $(S^4,\Kk)$ are related by a sequence of perturbations and unperturbations.  In the setting of generalized bridge trisections, we have the following conjecture.

\begin{conjecture}\label{unique}
Any two generalized bridge trisections for $(X,\Kk)$ with the same underlying trisection for $X$ become isotopic after a finite sequence of perturbations and unperturbations.
\end{conjecture}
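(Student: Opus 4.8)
The plan is to translate the conjecture into parametrized Morse theory for the pair $(X,\Kk)$ and to realize the resulting Cerf moves as elementary perturbations and unperturbations, following the strategy anticipated in the remark preceding Conjecture~\ref{conj:perturb}. Given two generalized bridge trisections $\Tt_0$ and $\Tt_1$ of $(X,\Kk)$ whose underlying trisections of $X$ are isotopic, I would first apply the ambient isotopy so that both underlying trisections agree on the nose with a fixed trisection $\Tt$. By Proposition~\ref{handlestuff} and the constructions of Lemmas~\ref{doublemorse}--\ref{bridgetri}, each $\Tt_i$ is then recovered from a self-indexing Morse function $h_i$ of the pair, together with a Heegaard surface $\Sigma_i$ in the appropriate middle level, such that the induced trisection $\Tt(h_i,\Sigma_i)$ equals $\Tt$; equivalently, each $\Tt_i$ is encoded by a banded link diagram for $\Kk$ with a compatible bridge structure on the attaching link $T$ and on the arcs determined by the saddles of $h_i|_{\Kk}$. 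Since $\Tt_0$ and $\Tt_1$ record the same surface in the same $4$--manifold with the same trisection, the task is to connect $h_0$ to $h_1$ and to track the effect of this connection on the induced bridge trisections.

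Next I would connect $h_0$ to $h_1$ through a generic one-parameter family $\{h_t\}$ of smooth functions on $X$ whose restrictions to $\Kk$ are Morse functions of the pair for all but finitely many $t$. Refining Lemma~\ref{trimorse} to a connectivity statement --- namely, that the space of self-indexing Morse functions inducing the fixed trisection $\Tt$ is path-connected --- one arranges that the ambient functions $h_t$ induce $\Tt$ throughout the family, so that no stabilization of the trisection of $X$ is ever required; this is precisely where the hypothesis that the underlying trisections are isotopic is used. Standard Cerf theory then decomposes the family into finitely many elementary events: births and deaths of canceling pairs of critical points of $h_t|_{\Kk}$ (a minimum with a saddle, or a saddle with a maximum); crossings of critical values of $h_t|_{\Kk}$, which correspond to local modifications (band slides and band swims) of the associated banded link diagram; handle slides of the ambient handle decomposition occurring within the chosen family; and interactions between critical points of $h_t|_{\Kk}$ and the ambient handles.

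The heart of the argument is to realize each elementary event, on the level of the associated generalized bridge trisection, by a bounded sequence of elementary perturbations, unperturbations, and isotopies preserving $\Tt$. A birth or death of a canceling critical pair of $h_{\Kk}$ is, essentially by construction, an elementary perturbation or unperturbation of the appropriate index: this is the content of the discussion preceding Conjecture~\ref{unique} and of Lemma~6.1 of~\cite{Meier-Zupan_Bridge_2017}, in which pushing a shadow arc $\delta_*$ off of $X_1$ is shown to correspond exactly to such an event. A critical-value crossing of $h_{\Kk}$, or an ambient handle slide, corresponds to an isotopy of the banded link diagram; using Proposition~\ref{trivbridge} to re-standardize the trivial tangles in the handlebodies $H_{ij}$, and Lemma~\ref{unq} to see that the trivial disk systems $\Dd_i$ introduce no further ambiguity, such an isotopy is realized on the bridge trisection after finitely many perturbations and unperturbations, again without stabilizing the trisection of $X$. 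Concatenating the realizations of the finitely many Cerf events then yields a finite sequence of perturbations and unperturbations carrying $\Tt_0$ to $\Tt_1$ up to isotopy.

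The main obstacle is precisely the bookkeeping in these middle steps. First, one must upgrade Lemma~\ref{trimorse} to the connectivity statement above, so that the path $\{h_t\}$ can genuinely be chosen to preserve the underlying trisection at every time. Second, one must verify, case by case, that every elementary Cerf event for $h_t|_{\Kk}$ --- including those in which a saddle of $h_{\Kk}$ interacts with the $1$-- and $2$--handles of $X$ --- reduces to a bounded number of elementary perturbations and unperturbations. The subtlest case is a saddle band of $h_{\Kk}$ sliding over a $2$--handle of $X$: one must check that this move commutes, up to perturbation, with the bridge structure of $T \subset Y_{\{3/2\}}$, and it is in carrying out this verification that the careful Cerf-theoretic analysis alluded to in the introduction is required.
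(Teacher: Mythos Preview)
The statement you are attempting to prove is stated in the paper as a \emph{conjecture}, not a theorem; the paper offers no proof.  What the paper does say, immediately after Conjecture~\ref{unique}, is that the analogous result for bridge trisections in $S^4$ relies on a genericity theorem of Swenton and Kearton--Kurlin for one-parameter families of Morse functions of the pair $(S^4,\Kk)$, that ``a more general result does not yet exist for arbitrary pairs $(X,\Kk)$,'' and that the conjecture would follow from such a result together with an adaptation of the $S^4$ argument.  Your proposal is essentially a fleshed-out version of exactly that sketch: reduce to a one-parameter family of Morse functions of the pair inducing the fixed trisection, decompose the family into Cerf events, and realize each event by perturbations and unperturbations.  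In that sense your approach is the paper's own intended approach, not a different route.

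The genuine gap, however, is not merely ``bookkeeping.''  The two obstacles you flag at the end are precisely the reasons this statement is a conjecture rather than a theorem.  First, the connectivity statement you need --- that the space of self-indexing Morse functions on $X$ inducing a fixed trisection $\Tt$ is path-connected --- is not established anywhere in the paper and does not follow from Lemma~\ref{trimorse}, which only asserts existence.  Second, and more seriously, even granting that, you still need the analogue of the Swenton/Kearton--Kurlin theorem: that a generic one-parameter family $\{h_t\}$ of functions on the pair $(X,\Kk)$, with $h_t$ inducing $\Tt$ for all $t$, can be put in a normal form whose singular events are exactly the short list you enumerate.  This is the missing ingredient the paper explicitly identifies, and your proposal assumes it without supplying it.  Until that parametrized genericity result is proved, the argument cannot be completed, and what you have written is a plausible outline rather than a proof.
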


The proof of the analogous result for bridge trisections in~\cite{Meier-Zupan_Bridge_2017} requires a result of Swenton~\cite{Swenton_On-a-calculus_2001} and Kearton-Kurlin~\cite{Kearton-Kurlin_All-2-dimensional_2008} that states that every one-parameter family of Morse functions of the pair $h_t:(S^4,\Kk) \rightarrow \R$ such that $h_t:S^4 \rightarrow \R$ is the standard height function can be made suitably generic.  Unfortunately, a more general result does not yet exist for arbitrary pairs $(X,\Kk)$; however, we remark that Conjecture~\ref{unique} would follow from such a result together with an adaptation of the proof in~\cite{Meier-Zupan_Bridge_2017}.

\newpage

\bibliographystyle{amsalpha}
\bibliography{MasterBibliography_2017_08}

\end{document}